\theoremstyle{plain}
\newtheorem{theorem}{Theorem}[section]
\newtheorem{lemma}[theorem]{Lemma}
\newtheorem{remark}[theorem]{Remark}
\newtheorem{corollary}[theorem]{Corollary}
\newtheorem{definition}[theorem]{Definition}
\newtheorem{assumption}[theorem]{Assumption}
\renewcommand{\d}{\mathrm{d}}
\numberwithin{equation}{section}
\def\MATLAB{}
\begin{document}

\title{On a Kelvin-Voigt Viscoelastic Wave Equation with Strong Delay}

\author{%
Andrii Anikushyn\thanks{Department of Computer Sciences and Cybernetics, Taras Shevcheno National University of Kyiv, Ukraine \texttt{anik\_andrii@univ.kiev.ua}} \and
Anna Demchenko\thanks{Department of Applied Mathematics \& Computer Science, Masaryk University, Brno, Czech Republic \hfill \texttt{ademchenko17@gmail.com}} \and
Michael Pokojovy\thanks{Department of Mathematical Sciences, The University of Texas at El Paso, TX, USA \hfill \texttt{mpokojovy@utep.edu}}
}

\date{\today}

\maketitle

\begin{abstract}
    An initial-boundary value problem for a viscoelastic wave equation subject to a strong time-localized delay in a Kelvin \& Voigt-type material law is considered.
    Transforming the equation to an abstract Cauchy problem on the extended phase space, 
    a global well-posedness theory is established using the operator semigroup theory both in Sobolev-valued $C^{0}$- and $\mathrm{BV}$-spaces.
    Under appropriate assumptions on the coefficients, a global exponential decay rate is obtained 
    and the stability region in the parameter space is further explored using the Lyapunov's indirect method.
    The singular limit $\tau \to 0$ is further studied with the aid of the energy method.
    Finally, a numerical example from a real-world application in biomechanics is presented.
\end{abstract}

\begin{center}
\begin{tabular}{p{1.0in}p{5.0in}}
    \textbf{Key words:} &  wave equation, Kelvin-Voigt damping,  time-localized delay, well-posedness, exponential stability, singular limit  \\
    \textbf{MSC (2010):} &
    35Q74,  
    74D05,  
    74H20,  
    74H25,  
    74H30,  
    74H55,  
    39A06   
\end{tabular}
\end{center}

\section{Introduction}\label{SECTION_INTRODUCTION}


Consider a viscoelastic body occupying in its reference configuration, in which renders the body is free of any stresses, a bounded Lipschitz domain $\Omega \subset \mathbb{R}^{d}$, $d \in \mathbb{N}$.
With $U \colon [0, \infty) \times \bar{\Omega} \to \mathbb{R}^{d}$ denoting the deflection field
and $\sigma \colon [0, \infty) \times \bar{\Omega} \to \mathbb{R}^{d \times d}$ stand for the Cauchy stress tensor,
the balance of momentum equation reads as
\begin{equation}
    \label{EQUATION_BALANCE_OF_MOMENTUM}
    \rho \partial_{tt} U(t, x) + \operatorname{div} \sigma(t, x) = F(t, x) \quad \text{ for } \quad t > 0, \quad x \in \Omega,
\end{equation}
where $\rho > 0$ denotes the constant material density and $F \colon (0, \infty) \times \Omega \to \mathbb{R}^{d}$ stands for the specific volumetric external force.
Introducing the infinitesimal (Lagrangian) strain tensor
\begin{equation}
    \label{EQUATION_STRAIN_TENSOR}
    \varepsilon(t, x) = \frac{1}{2} \Big(\nabla U(t, x) + \big(\nabla U(t, x)\big)^{T}\Big) \quad \text{ with } \quad
    \nabla = (\partial_{x_{1}}, \dots, \partial_{x_{d}})^{T}
\end{equation}
a Hook-type homogeneous isotropic Kelvin \& Voigt viscoelastic material law reads as
\begin{equation}
    \label{EQUATION_HOOK_LAW_KELVIN_VOIGT}
    \sigma(t, x) = \big(2 \mu \varepsilon(t, x) + 2 \tilde{\mu} \partial_{t} \varepsilon(t, x)\big) + 
    \Big(\lambda \operatorname{tr}\big(\varepsilon(t, x)\big) + \tilde{\lambda} \operatorname{tr}\big(\partial_{t} \varepsilon(t, x)\big)\Big) \mathrm{I}_{d \times d}
\end{equation}
with Lam\'{e} parameters $\lambda$ and $\mu$ 
and their anelastic counterparts (`viscosities') $\tilde{\lambda}$ and $\tilde{\mu}$ (cf. \cite[Section 2]{CaPoGe2004}).
Combining Equations (\ref{EQUATION_BALANCE_OF_MOMENTUM})--(\ref{EQUATION_HOOK_LAW_KELVIN_VOIGT}),
we arrive at the following system of linear viscoelasticity:
\begin{equation}
    \label{EQUATION_SYSTEM_KELVIN_VOIGT}
    \rho \partial_{tt} U(t, x) - \mu \triangle U(t, x) - (\mu + \lambda) \operatorname{div} U(t, x)
    - \tilde{\mu} \triangle \partial_{t} U(t, x) - (\tilde{\mu} + \tilde{\lambda}) \operatorname{div} \partial_{t} U(t, x) = \rho F(t, x).
\end{equation}
We refer the reader to \cite{KaLa2009} for a related model describing propagation of sound in fluids.

In the present paper, we propose to replace the instanteneous Kelvin \& Voigt law with the following time-localized delay constitutive relation
\begin{align}
    \label{EQUATION_HOOK_LAW_KELVIN_VOIGT_DELAY}
    \sigma(t, x) &= \Big(2 \mu \varepsilon(t, x) + 2 \tilde{\mu} \partial_{t} \varepsilon(t, x)\big) + 
    \Big(\lambda \operatorname{tr}\big(\varepsilon(t, x)\big) \mathrm{I}_{d \times d} + \tilde{\lambda} \operatorname{tr}\big(\partial_{t} \varepsilon(t, x)\big) \mathrm{I}_{d \times d}\Big) \\
    \notag
    &+ \Big(2 \mu_{\tau} \varepsilon(t - \tau, x) + 2\tilde{\mu}_{\tau} \partial_{t} \varepsilon(t - \tau, x)\big) + 
    \Big(\lambda_{\tau} \operatorname{tr}\big(\varepsilon(t - \tau, x)\big) + \tilde{\lambda}_{\tau} \operatorname{tr}\big(\partial_{t} \varepsilon(t - \tau, x)\big)\Big) \mathrm{I}_{d \times d}
\end{align}
with constants $\mu_{\tau}, \lambda_{\tau}, \tilde{\mu}_{\tau}, \tilde{\mu}_{\tau}$ such that $\mu + \lambda > 0$ and a positive delay time $\tau > 0$.
The new material law (\ref{EQUATION_HOOK_LAW_KELVIN_VOIGT_DELAY}) is more comprehensive as it allows for a trade-off between the standard instanteneous law (\ref{EQUATION_SYSTEM_KELVIN_VOIGT})
and the purely retarded relation
\begin{equation}
    \notag
    \sigma(t, x) = \Big(2 \mu_{\tau} \varepsilon(t - \tau, x) + 2 \tilde{\mu}_{\tau} \partial_{t} \varepsilon(t - \tau, x)\big) + 
    \Big(\lambda_{\tau} \operatorname{tr}\big(\varepsilon(t - \tau, x)\big) + \tilde{\lambda}_{\tau} \operatorname{tr}\big(\partial_{t} \varepsilon(t - \tau, x)\big)\Big) \mathrm{I}_{d \times d}.
\end{equation}
In contrast to the instanteneous law (\ref{EQUATION_HOOK_LAW_KELVIN_VOIGT}) rendering the system (\ref{EQUATION_SYSTEM_KELVIN_VOIGT}) parabolic and exponentially stable (thus, `overdamping' the system),
the mixed delay constitutive equation (\ref{EQUATION_HOOK_LAW_KELVIN_VOIGT_DELAY}) can adequately describe both stable and unstable dynamics 
as $t \to \infty$ depending on the choice of constants and parameters (cf. \cite{KhuPoRa2015}).
In addition, the delay material law (\ref{EQUATION_HOOK_LAW_KELVIN_VOIGT_DELAY}) is also advantageous on finite time horizons
as it provides for more flexibility due to the presence of additional degrees of freedom from `delayed viscosity' parameters.
In particular, the model is capable of modeling non-monotonic dynamics (cf. Section \ref{SECTION_NUMERICAL_EXAMPLE}).
For a more detailed review of the positive impact of delay mechanisms in physics and engineering we refer to \cite{AbDoBeBy1993, JaOl1998, KwoLeeKi1990, SuBie1980}.
At the same time, a great degree of carefulness is required when studying and designing delay material laws and feedbacks
since delay has the potential to destabilize the system or even destroy well-posedness \cite{DaLaPo1986, Da1988, KhuPoRa2015}.

For the sake of simplicity, in this paper, we assume the material is incompressible, i.e., $\operatorname{div} U \equiv 0$, and let $F \equiv 0$.
Further, we assume there exists a potential function $y \colon [0, \infty) \times \bar{\Omega} \to \mathbb{R}$ such that $U(t, x) = \nabla y(t, x)$.
Rewriting the Laplacian via the $\operatorname{curl}$-operator
$\triangle U = \operatorname{curl} \operatorname{curl} U - \nabla \operatorname{div} U$, recalling $\operatorname{curl} \nabla U \equiv 0$ and renaming the constants accordingly, 
Equations (\ref{EQUATION_BALANCE_OF_MOMENTUM}), (\ref{EQUATION_STRAIN_TENSOR}), (\ref{EQUATION_HOOK_LAW_KELVIN_VOIGT_DELAY}) can be reduced to
\begin{equation}
    \label{EQUATION_KELVIN_VOIGT_EQUATION_ORIGINAL}
    \partial_{tt} y(t, x) - c_1 \triangle y(t, x) - c_2 \triangle y(t-\tau, x)
    -d_1 \partial_t \triangle y(t,x)-d_2 \partial_t \triangle y(t-\tau,x) = f(t, x)
\end{equation}
with $f(t, x) = \rho \operatorname{div} F(t, x)$.
Alternatively, assuming $\operatorname{div} U \equiv 0$, the equations for $U_{1}, U_{2}, \dots, U_{d}$ decouple
so that we can obtain Equation (\ref{EQUATION_KELVIN_VOIGT_EQUATION_ORIGINAL}) for, say, $y \equiv U_{2}$ (cf. Section \ref{SECTION_NUMERICAL_EXAMPLE}).
We refer the reader to \cite{ChaCro1986} for a more rigorous reduction procedure.

To close Equation (\ref{EQUATION_KELVIN_VOIGT_EQUATION_ORIGINAL}), boundary and initial conditions are required.
We assume $\Gamma$ is subdivided into two relatively open parts $\Gamma_0$ and $\Gamma_1$, i.e., $\Gamma = \bar{\Gamma}_1 \cup \bar{\Gamma}_0$, 
with $\bar{\Gamma}_1 \cap \bar{\Gamma}_0 = \emptyset$ and $\Gamma_{0} \neq \emptyset$.
From the physical point of view, two types of boundary conditions are natural for $y$: either prescribing the deflection $y$ or or the surface stress
\begin{equation}
    \notag
    c_1 \frac{\partial y}{\partial \nu}(t, x) - c_2 \frac{\partial y}{\partial \nu} y(t-\tau, x)
    -d_1 \partial_t \frac{\partial y}{\partial \nu} y(t,x) - d_2 \partial_t \frac{\partial y}{\partial \nu}  y(t - \tau,x) = 0.
\end{equation}
The latter is a neutral differential equation (viz. \cite[Chapter 9]{HaVeLu1993}) and, therefore, uniquely solvable for $\frac{\partial y}{\partial \nu}(t, \cdot)$.
Thus, prescribing the surface stress is equivalent with providing a boundary condition for $\frac{\partial y}{\partial \nu}(t, \cdot)$.
Due to obvious analytic benefits of this approach (operator domain is constant, etc.), we will pursue this approach.

Imposing mixed homogeneous Dirichlet--Neumann boundary conditions on $y$ and usual initial conditions,
we obtain a system of partial delay differential equations
\begin{align}
    \notag
    \partial_{tt} y(t, x) - c_1 \triangle y(t, x) - c_2 \triangle y(t-\tau, x) &\phantom{=} \\
    \label{INTRODUCTION_EQUATION}
    -d_1 \partial_t \triangle y(t,x)-d_2 \partial_t \triangle y(t-\tau,x) &= 0 \text{ for } t > 0, \; x \in \Omega, \\
    \label{INTRODUCTION_BC_2}
    y(t,x) = 0 \text{ for } t > 0, \; x \in \Gamma_0, \quad \frac{\partial y(t,x)}{\partial \nu} &= 0 \text{ for }  t > 0, \; x \in \Gamma_1, \\
    \label{INTRODUCTION_IC_2}
    y(0+, x) = y^0, \quad \partial_t y(0+, x) = y^{1} \text{ for } x \in \Omega, \quad y(t, x) &= \varphi(t, x) \text{ for } (t, x) \in  [-\tau,0] \times \Omega,
\end{align}
where $\nu \colon \Gamma \to \mathbb{R}^{3}$ stands for the outer unit normal vector to the boundary $\Gamma$ 
and $\frac{\partial u}{\partial \nu}$ is the normal derivative, $\tau > 0$ is a delay time, and $c_1,c_2,d_1,d_2$ are positive real numbers.


Delay effects widely arise in a variety of areas of science and engineering, in particular physics, biology, chemistry, ecology, material engineering, mechanism design, etc. 
Such phenomena are commonly modeled by ordinary and partial delay differential equations.
To study these problems mathematically, a new area of analysis and control of delay differential equations has emerged.
Over the past decades, numerous systems similar to Equations (\ref{INTRODUCTION_EQUATION})-(\ref{INTRODUCTION_IC_2}) have massively drawn attention in various communities.
In the following, we give a brief review of most relevant publications.

Ammari {\it et al.} \cite{AmNiPi1995} considered an abstract equation reminiscent of (\ref{INTRODUCTION_EQUATION})--(\ref{INTRODUCTION_IC_2}) (for $c_{1} = d_{2} = 0$):
\begin{align}
    \label{EQUATION_AMMARI_NICAISE_PIGNOTTI_EQ1}
    \partial_{tt} u(t) + a \mathcal{B} \mathcal{B}^{\ast} \partial_{t} u(t) + \mathcal{B} \mathcal{B}^{\ast} u(t - \tau) &= 0 \quad \text{ for } t > 0, \\
    \label{EQUATION_AMMARI_NICAISE_PIGNOTTI_EQ2}
    u(0) = u^{0}, \quad \partial_{t} u(0) &= u^{1} \quad \text{ for } t > 0, \\
    \label{EQUATION_AMMARI_NICAISE_PIGNOTTI_EQ3}
    \mathcal{B}^{\ast} u(t) &= f_{0}(t) \quad \text{ for } t \in [-\tau, 0]
\end{align}
for an appropriate class of operators $\mathcal{B}$. Here, $a > 0$ and $\mathcal{B} \mathcal{B}^{\ast}$ models general `elliptic' operators such as the Dirichlet Laplacian.
Using the semigroup theory, well-posedness of Equations (\ref{EQUATION_AMMARI_NICAISE_PIGNOTTI_EQ1})--(\ref{EQUATION_AMMARI_NICAISE_PIGNOTTI_EQ3})
was shown on the extended phase space. The spectrum of underlying evolution operator was investigated and an exponetial stability result was proved.
Despite its superficial similarity with Equations (\ref{INTRODUCTION_EQUATION})--(\ref{INTRODUCTION_IC_2}),
the delay in our Equation (\ref{INTRODUCTION_EQUATION}) is stronger due to the presence of $\triangle \partial_{t} y(t - \tau, \cdot)$-term.

Ammari {\it et al.}~\cite{AmNiPi2010} studied the problem of boundary stabilization of a wave equation with a constant delay in the velocity field over an open bounded domain $\Omega \subset \mathbb{R}^d$ 
\begin{align}
    \partial_{tt} u(x,t) - \triangle u(x,t) + a\partial_{t}u(x,t-\tau) &= 0, \quad x \in \Omega, \quad t>0, \\
    u(x,t) &= 0, \quad x \in \Gamma_0, \quad t > 0, \\
    \frac{\partial u}{\partial \nu} (x, t) &= -k \partial_{t} u(x,t), \quad x\in \Gamma_1, \quad t>0
\end{align}
subject to usual initial conditions for $u(\cdot, 0)$, $\partial_{t} u(\cdot ,0)$ and $u|_{\Omega \times (-\tau, 0)}$.
Here, $\partial \Omega = \bar{\Gamma}_0 \cup \bar{\Gamma}_1$ is the boundary of $\Omega$ and $a, b > 0$ are two constants. 
Using multiplier techniques and obtaining a uniform decay estimate for a suitable Lyapunov functional, an exponential stability result in an appropriate norm was shown.

Benaissa \& Messaoudi \cite{BeMe2013} investigated decay properties of solutions to a wave equation with a time-varying delay in the velocity field
\begin{align}
    \notag
    \partial_{tt}(x,t) - \triangle u(x,t) + \mu_1 \sigma(t) \partial_{t} u(x,t) & \\
    \label{INTRO_EQ1}
    + \mu_2(t)\sigma(t) u_t\big(x,t-\tau(t)\big) + \theta(t) h\big(\nabla_x u(x,t)\big) &= 0, \quad (x, t) \in \Omega \times (0, \infty), \\
    \label{INTRO_EQ2}
    u(x,t) &= 0, \quad (x, t) \in \Gamma \times (0, \infty)
\end{align}
together with initial conditions for $u(\cdot, 0)$, $\partial_{t} u(\cdot, 0)$ and $u|_{\Omega \times (-\tau(0), 0)}$.
Adopting some of Guesmia's ideas \cite{Gu2003_1, Gu2003_2} and using multiplier techniques, 
an energy decay estimate was proved for solutions to Equations (\ref{INTRO_EQ1})--(\ref{INTRO_EQ2}) both for linear and nonlinear $h(\cdot)$'s.

Nicaise \& Pignotti \cite{NiPi2006} examined a wave equation with a delay term in the boundary feedback
\begin{align}
    \label{INTRO_EQ3}
    \partial_{tt}(x,t)-\triangle u(x,t) &= 0, \quad  x\in\Omega, \quad t>0, \\
    u(x,t) &= 0, \quad x\in \Gamma_D, \quad t>0, \\
    \label{INTRO_EQ4}
    \frac{\partial u}{\partial \nu} (x,t) + \mu_1 \partial_{t} u(x,t) + \mu_2 \partial_{t} u(x,t-\tau) &= 0, \quad x\in \Gamma_N, \quad t>0
\end{align}
subject to appropriate initial conditions -- and a second model with a delayed feedback distributed over $\Omega$ subject to homogeneous boundary conditions.
Here, the boundary $\Gamma = \partial \Omega$ is decomposed as $\Gamma = \bar{\Gamma}_D \cup \bar{\Gamma}_N$ with $\Gamma_D \cap \Gamma_N = \emptyset$ and $\Gamma_D \neq \emptyset$.
Assuming $\mu_2 < \mu_1$, exploiting Carleman estimates established by Lasiecka {\it et al.} \cite{LaTrYa1999} and adopting compactness-uniqueness arguments, 
a stabilization result for (\ref{INTRO_EQ3})--(\ref{INTRO_EQ4}) was obtained in general space dimension by proving a suitable observability estimate.
Moreover, for $\mu_2 \ge \mu_1$, the existence of a sequence of delays destabilizing the system was shown. 
Further, under appropriate assumptions on $a \in L^\infty(\Omega)$, on the strength of a suitable observability estimate,
the same result was also obtained for the model with the delay in the interior.
These results were further extended by the same authors \cite{NiPi2011} to the case of time-varying delays.

Nicaise \& Pignotti \cite{NiPi2008} also examined a wave equation subject to Dirichlet boundary conditions 
on one part of the boundary and a dissipative boundary condtion with distributed delay on the remaining part of the boundary:
\begin{align}
    \label{INTRO_EQ13}
    \partial_{tt} u(x,t)-\triangle u(x,t) &= 0, \quad x \in \Omega, \quad t>0, \\
    u(x,t) &= 0, \quad x \in \Gamma_D, \quad t>0, \\
    \label{INTRO_EQ14}
    \frac{\partial u}{\partial \nu} (x,t) + \mu_0 u_t(x,t) +  \int_{\tau_1}^{\tau_2}\mu(s) u_t(x,t-s) \mathrm{d}s &= 0, \quad x\in \Gamma_N, \quad t>0
\end{align}
subject to suitable initial conditions along with a counterpart of the former system with an internally distributed delayed feedback.
Utilizing the semigroup theory, well-posedness and exponential stability under appropriate conditions were established for both systems.
Furthermore, an instability counter-example for the case of internal delayed feedback was constructed in case
the condition $\mu_0 > \|a\|_{\infty} \int_{\tau_1}^{\tau_2} \mu(s) \mathrm{d}s$ is violated.

Motivated by \cite{NiPi2006}, Kirane \& Said-Houari \cite{KiSa2011} considered an initial-boundary value problem for a viscoelastic wave equation with a delay term in the interior feedback
\begin{align}
    \notag
    \partial_{tt} u(x,t) - \triangle u(x,t) + \int_0^t g(t-s)\triangle u(x,s) \mathrm{d}s & \\
    \label{INTRO_EQ7}
    + \mu_1 \partial_{t} u(x,t) + \mu_2(t) \partial_{t} u(x, t-\tau) &= 0, \quad  x \in \Omega, \quad t > 0, \\
    \label{INTRO_EQ8}
    u(x,t) &= 0, \quad x \in \partial \Omega, \quad t > 0
\end{align}
subject to initial conditions on $u(\cdot, 0)$, $\partial_{t} u(\cdot ,0)$ and $u|_{\Omega \times (-\tau, 0)}$.
Equations (\ref{INTRO_EQ7})--(\ref{INTRO_EQ8}) arise as a linear model for propagation of a viscoelastic wave in compressible fluids.
Under the assumption $0 < \mu_2 \le \mu_1$, using energy estimates and a Faedo \& Galerkin approximation, the system was showed to be well-posed. 
Further, under the same assumptions, the authors proved a general decay rate for the total energy of Equations (\ref{INTRO_EQ7})--(\ref{INTRO_EQ8}). 
Due to the presence of viscoelastic damping, the solution was shown to be asymptotically stable even if $\mu_1 = \mu_2$ -- in contrast to \cite{NiPi2006} where no viscoelastic damping was available.

Dai \& Yang \cite{DaYa2014} improved on the results of \cite{NiPi2006} for Equations (\ref{INTRO_EQ7})--(\ref{INTRO_EQ8}). 
Controlling the delay term by the derivative of the energy instead of the damping term in the {\it a priori} estimate,
well-posedness  under suitable assumptions on the kernel $g(\cdot)$ without assuming the positivity of $\mu_1, \mu_2$ and $\mu_2 \le \mu_1$ was shown.
Introducing a new Lyapunov functional, an energy decay result for the problem (\ref{INTRO_EQ7})--(\ref{INTRO_EQ8}) for the case $\mu_1 = 0$ was proved as well. 

Benaissa \& Benguessoum \cite{BeBe2014} investigated the existence and decay properties of solutions to the viscoelastic wave equation 
with a delay term in the nonlinear internal feedback of the form
\begin{align}
    \notag
    \partial_{tt} u(x,t) - \triangle u(x,t) + \int_0^t h(t-s) \triangle u(x,s) \mathrm{d}s & \\
    + \mu_1 g_1\big(\partial_{t} u(x,t)\big) + \mu_2 g_2\big(\partial_{t} u(x,t-\tau)\big) &= 0, \quad (x,t) \in \Omega \times (0, \infty), \\
    u(x,t) &= 0, \quad (x,t) \in \Gamma \times (0, \infty)
\end{align}
along with initial conditions for $u(\cdot, 0)$, $\partial_{t} u(\cdot ,0)$ and $u|_{\Omega \times (-\tau, 0)}$ in a bounded domain $\Omega \subset \mathbb{R}^{d}$. 
Here, $h(\cdot)$ is a positive non-increasing function defined on $\mathbb{R}^+$, $g_1(\cdot)$ and $g_2(\cdot)$ are two kernel functions, $\tau > 0$ is a delay time,
and $\mu_1, \mu_2$ are positive real numbers.
Under some additional assumptions, using the energy method combined with the Faedo \& Galerkin scheme,
global existence result was established and the long-time behavior of solutions was examined using a perturbed energy method.

Liu \cite{Li2013} investigated a viscoelastic equation with time-varying internal delay feedback:
\begin{align}
    \notag
    \partial_{tt} u(x,t) - \triangle u(x,t) + \int_0^t g(t-s)\triangle u(x,s) \mathrm{d}s & \\
    \label{INTRO_EQ11}
    + a_0  \partial_{t} u(x,t) + a_1 \partial_{t} u\big(x, t-\tau(t)\big) &= 0, \quad x \in \Omega, \quad t > 0, \\
    u(x,t) &= 0, \quad x \in \partial \Omega, \quad t \ge 0
\end{align}
subject to initial conditions on $u(\cdot, 0)$, $\partial_{t} u(\cdot ,0)$ and $u|_{\Omega \times (-\tau, 0)}$.
The boundary $\partial \Omega$ was assumed of $C^2$-class, the functions $\alpha(\cdot), g(\cdot)$ positive and non-increasing, $a_0 > 0$ and $\tau(t) > 0$. 
Under suitable assumptions, a general decay rate for the energy was proved depending both on $\alpha$ and the behavior of $g(\cdot)$.
Summarizing, this work extended the results of \cite{KiSa2011, WLiu2013, NiPi2006} to time-varying delays without positivy assumption on $a_1$.

Xu {\it et al.} considered a 1D wave system with a homogeneous Dirichlet boundary condition at $x = 0$ a delayed Neumann boundary feedback at $x = 1$:
\begin{align}
    \label{INTRO_EQ17}
     \partial_{tt} u(x,t) - \partial_{xx} u(x,t) &= 0, \quad x \in (0, 1), \quad t > 0, \\
    u(0, t) &= 0, \quad t > 0, \\
    \label{INTRO_EQ18}
    \partial_{x} u(1,t) + k \mu  \partial_{t} u(1, t) + k(1 - \mu)  \partial_{t} u(1, t - \tau) &= 0, \quad t > 0
\end{align}
complemented by appropriate initial conditions on $u(\cdot, 0)$, $\partial_{t} u(\cdot ,0)$ and $u|_{\{1\} \times (-\tau, 0)}$.
The closed-loop system (\ref{INTRO_EQ17})--(\ref{INTRO_EQ18}) was shown to generates a $C_0$-semigroup of linear bounded operators and the stability of the system was investigated.
In particular, it was proved,  given $\mu > \frac12$, for any $\tau > 0$, it is possible to choose $k > 0$ so that the closed-loop system is exponentially stable. 
When $\mu=\frac{1}{2}$, it was further shown that for rational $\tau \in (0, 1)$ the system is unstable, 
whereas for irrational $\tau \in (0, 1)$ the system is asymptotically stable. When $\mu < \frac{1}{2}$, the system is always unstable.  

Despite its structural similarity with the `pure' wave equation, the instanteneous part of Equations (\ref{INTRODUCTION_EQUATION})--(\ref{INTRODUCTION_IC_2})
rather exhibits a parabolic nature manifested in the analyticity of the semigroup generated by the respective driving operator (cf. \cite[Theorem 1.1 with $\beta = 0$, $\sigma = 1/2$]{Ma1983})
Numerous publications both on the heat equation with delay and abstract parabolic systems are available in the literature.
For space considerations, we omit a review and rather refer the reader to Khusainov {\it et al.} \cite{KhuPoRa2015} for a detailed summary of recent literature on this topic.
In contrast to \cite{KhuPoRa2015} and many references therein, 
we do not exploit the analyticity of the instanteneous part of Equations (\ref{INTRODUCTION_EQUATION})--(\ref{INTRODUCTION_IC_2})
(except for the regularity study in Section \ref{SECTION_SOLUTION_REGULARITY}) but rather employ general semigroup arguments.
This way, Equations (\ref{INTRODUCTION_EQUATION})--(\ref{INTRODUCTION_IC_2}) are treated by the same technique as those applied to hyperbolic systems.
Similar approach was adopted by Ammari {\it et al.} \cite{AmNiPi1995} for an abstract system reminiscent of Equations (\ref{INTRODUCTION_EQUATION})--(\ref{INTRODUCTION_IC_2}).


The rest of the paper is organized as follows. In Section \ref{SECTION_WP}, we establish well-posedness for Equations (\ref{INTRODUCTION_EQUATION})--(\ref{INTRODUCTION_IC_2})
by transforming the equation to extended phase space and using operator semigroup theory in $L^{2}$-Sobolev-valued $C^{0}$- and $\mathrm{BV}$-spaces.
Further, regularity of solutions is investigated.
In Section \ref{SECTION_LONG_TIME_BEHAVIOR}, exponential decay rate of Equations (\ref{INTRODUCTION_EQUATION})--(\ref{INTRODUCTION_IC_2})
is established using Lyapunov's method and the stability region in the parameter space is further investigated using sufficient conditions obtained with he Lyapunov's method.
Section \ref{SECTION_TAU_TO_ZERO} is dedicated to studying the singular limit of Equations (\ref{INTRODUCTION_EQUATION})--(\ref{INTRODUCTION_IC_2}) as $\tau \to 0$.
Finally, Section \ref{SECTION_NUMERICAL_EXAMPLE} presents a real-world numerical example illustrating our model.


\section{Solution Theory} \label{SECTION_WP}
In the following, assume $c_{1}, d_{1} > 0$ and $c_{2}, d_{2} \neq 0$.
Introducing the `history variable'
\begin{equation}
    \label{WELL_POS_FUNCTION_Z_DEFINITION}
    z(s, t, x) = y(t-\tau s, x), \quad s \in (0,1), \quad t > 0,
\end{equation}
we define the extended phase space
\begin{equation}
    \notag
    \mathscr{H} =  H_{\Gamma_0}^1(\Omega) \times L^2(\Omega) \times L_{\tau}^2\big(0, 1; H_{\Gamma_0}^1(\Omega)\big) \times L_{\tau}^2\big(0, 1; H_{\Gamma_0}^1(\Omega)\big)
\end{equation}
equipped with the inner product
\begin{equation}
    \label{WELL_POS_SPACE_H_INNER_PRODUCT}
    \langle U, V \rangle_{\mathscr{H}} = \int_G \Big(c_1 \nabla u_1 \cdot \nabla v_1 + u_2 v_2 + \tau d_1 \int_0^1 \xi e^{cs} \nabla u_3 \cdot \nabla v_3\ \mathrm{d}s
    + \tau |d_2| \int_0^1 \xi e^{cs} \nabla u_4 \cdot \nabla v_4\ \mathrm{d} s \Big) \mathrm{d}x
\end{equation}
for $U=(u_1,u_2,u_3,u_4)^{T}, V=(v_1,v_2,v_3,v_4)^{T} \in \mathscr{H}$,
where $H_{\Gamma_0}^1(\Omega) = \big\{u \in H^1(\Omega) \,|\, u|_{\Gamma_0} = 0\big\}$ and 
\begin{align}
    \label{WELL_POS_CONDITIONS_ON_XI}
    \xi = \frac{d_1}{3|d_2|}, \quad c > \ln\left(\max \Big\{ \frac{9c_2^2|d_2|}{d_2^3}, \frac{9d_2^2}{d_1^2}\Big\}\right).
\end{align} 
It is easy to verify that the topology induced by the inner product is equivalent with the standard product topology on $\mathscr{H}$.

Now, introducing the extended state variable $(y, \partial_{t} y, z)$,
Equations (\ref{INTRODUCTION_EQUATION})--(\ref{INTRODUCTION_IC_2}) can equivalently be written as
\begin{align}
    \notag
    \partial_{tt} y(t,x)-c_1 \triangle y(t,x)-c_2 \triangle z(t, 1, x) & \\
    \label{WELL_POS_INITIAL_SYSTEM_EQUATION}
    -d_1 \partial_t \triangle y(t,x)-d_2 \partial_t \triangle z(t, 1, x) &= 0 \text{ for } (t, x) \in (0, \infty) \times \Omega, \\
    \label{WELL_POS_INITIAL_SYSTEM_Z_1}
    \partial_t z(t, s, x) + \tfrac{1}{\tau} \partial_s z(t, s, x) &=0 \text{ for } (t, s, x) \in (0, \infty) \times (0,1) \times \Omega, \\
    \label{WELL_POS_INITIAL_SYSTEM_Z_2}
    \partial_{tt} z(t, s, x) + \tfrac{1}{\tau} \partial_{st} z(t, s, x) &= 0 \text{ for } (t, s, x) \in (0, \infty) \times (0,1) \times \Omega, \\
    \label{WELL_POS_INITIAL_SYSTEM_BC_2}
    y(t,x) = 0 \text{ for } x \in \Gamma_0, \quad \frac{\partial y(t, x)}{\partial \nu} &= 0 \text{ for } x \in \Gamma_1, \\
    \label{WELL_POS_INITIAL_SYSTEM_IC_1}
    y(0+, x)=y^0(x), \quad \partial_t y(0+,x) &= y^{1}(x) \text{ for } x \in \Omega, \\
    \label{WELL_POS_INITIAL_SYSTEM_IC_2}
    y(t, x) &= \varphi(t, x) \text{ for } (t, x) \in (-\tau, 0) \times \Omega
\end{align}
so that the evolution is taking place on $\mathscr{H}$.
Letting $v_1 := y, v_2 := \partial_t y, v_3 := z, v_4 := \partial_t z$, we rewrite Equations (\ref{WELL_POS_INITIAL_SYSTEM_EQUATION})-(\ref{WELL_POS_INITIAL_SYSTEM_IC_2}) as follows:
\begin{align}
    \notag
    \partial_{t} v_2 (t,x)-c_1 \triangle v_1(t,x)-c_2 \triangle v_3(t, 1, x)& \\
    \label{WELL_POS_SECOND_SYSTEM_EQUATION}
    - d_1 \partial_t \triangle v_1(t,x) - d_2  \triangle v_4(t, 1, x) &= 0 \text{ for } (t, x) \in (0, \infty) \times \Omega, \\
    \label{WELL_POS_SECOND_SYSTEM_Z_1}
    \partial_{t} v_3(t, s, x) + \frac{1}{\tau} \partial_s v_3(t, s, x) &= 0 \text{ for } (t, s, x) \in (0, \infty) \times (0,1) \times \Omega, \\
    \label{WELL_POS_SECOND_SYSTEM_Z_2}
    \partial_{t} v_4(t, s, x) + \frac{1}{\tau} \partial_{s} v_4(t, s, x) &= 0 \text{ for } (t, s, x) \in (0, \infty) \times (0,1) \times \Omega, \\
    \label{WELL_POS_SECOND_SYSTEM_BC_2}
    v_1(t,x) = 0 \text{ for } x \in \Gamma_0, \quad \frac{\partial v_1(t,x)}{\partial \nu} &= 0 \text{ for } x \in \Gamma_1, \\
    \label{WELL_POS_SECOND_SYSTEM_IC_1}
    v_1(0+, x) = y^0(x), \quad v_2(0+, x) &= y^{1}(x) \text{ for } x \in \Omega, \\
    \label{WELL_POS_SECOND_SYSTEM_IC_2}
    v_1(t, x) = \varphi(t, x), \quad v_2(t, x) &= \partial_{t} \varphi(t, x) \text{ for } (t, x) \in (-\tau, 0) \times \Omega.
\end{align}

Consider the linear operator $\mathscr{A} \colon D(\mathscr{A}) \subset \mathscr{H} \to \mathscr{H}$ defined via
\begin{equation}
    \label{WELL_POS_OPERATOR_DEFINITION}
    \mathscr{A} V = 
    \begin{pmatrix} 
        v_2 \\ 
        c_1 \triangle v_1+c_2\triangle v_3|_{s=1}+d_1\triangle v_2+d_2\triangle v_4|_{s=1} \\ 
        -\tau^{-1} \partial_s v_3 \\ 
        -\tau^{-1} \partial_s v_4 
    \end{pmatrix}
    \quad \text{ for } \quad V = (v_1, v_2, v_3, v_4)^{T},
\end{equation}
where
\begin{equation}
    \label{WELL_POS_OPERATOR_DOMAIN}
    \begin{split}
        D(\mathscr{A}) = \big\{V = (v_1,v_2,v_3,v_4)^{T} \in \mathscr{H} \,\big|\, &\mathscr{A} V \in \mathscr{H} \text{ such that } v_3|_{s=0} = v_1, \; v_4|_{s=0} = v_2 \text{ and } \\
        &\text{Green's formula (\ref{EQUATION_GREEN_FORMULA}) holds true}\big\}
    \end{split}
\end{equation}
with the Green's formula
\begin{equation}
    \label{EQUATION_GREEN_FORMULA}
    \begin{split}
        \int_\Omega \big(c_1 \triangle v_1 &+ c_2 \triangle v_3|_{s=1} + d_1 \triangle v_2 + d_2 \triangle v_4|_{s=1}\big) \psi \, \mathrm{d}x \\
        &= -\int_\Omega \big(c_1 \nabla v_1 + c_2 \nabla v_3|_{s=1} + d_1 \nabla v_2 + d_2 \nabla v_4|_{s=1}\big) \cdot\nabla \psi \, \mathrm{d}x
    \end{split}
\end{equation}
for any $\psi \in H_{\Gamma_0}^1(\Omega)$.
Observing
\begin{align*}
    D(\mathscr{A}) = \big\{(v_1,v_2,v_3,v_4)^{T} \,\big|\, &v_1, v_2 \in H_{\Gamma_0}^1(\Omega), \;
    c_1 \triangle v_1 + c_2 \triangle v_3 |_{s=1} + d_1 \triangle v_2 + d_2 \triangle v_3 |_{s=1} \in L^2(\Omega), \\
    &v_3 \in H^1\big(0,1; H_{\Gamma_0}^1(\Omega)\big), \; v_4 \in H^1\big(0, 1; H_{\Gamma_0}^1(\Omega)\big)\big\},
\end{align*}
the domain $D(\mathscr{A})$ of $\mathscr{A}$ in Equation (\ref{WELL_POS_OPERATOR_DOMAIN}) can be expressed as
\begin{align}
    \notag
    D(\mathscr{A}) = \big\{V = (v_1,v_2,v_3,v_4)^{T} \,\big|\, &V \in H_{\Gamma_0}^1(\Omega) \times H_{\Gamma_0}^1(\Omega) \times H^1\big(0, 1; H_{\Gamma_0}^1(\Omega)\big) \times H^1\big(0, 1; H_{\Gamma_0}^1(\Omega)\big), \\
    \label{WELL_POS_OPERATOR_DOMAIN_REWRITTEN}
    &\text{s.t. } v_3|_{s=0} = v_1, \; v_4|_{s=0} = v_2 \text{ and Eq. (\ref{EQUATION_GREEN_FORMULA}) holds true}\Big\}.
\end{align}
Therefore, Equations (\ref{WELL_POS_SECOND_SYSTEM_EQUATION})--(\ref{WELL_POS_SECOND_SYSTEM_IC_2}) can be cast into the form of an abstract Cauchy problem on the extented phase space $\mathscr{H}$
\begin{equation}
    \label{WELL_POS_ABSTRACT_FORMULATION}
    \dot{V}(t) = \mathscr{A} V(t) \quad \text{ for } \quad t > 0, \quad V(0) = V^{0}
\end{equation}
with $V = (v_1, v_2, v_3, v_4)^{T}$ and $V^{0} := (y^{0}, y^{1}, \varphi^{0}, \varphi^{1})^{T}$,
where $\varphi^{0} := \varphi\big(-\frac{\cdot - \tau}{\tau}\big)$, $\varphi^{1} := (\partial_{t} \varphi)\big(-\frac{\cdot - \tau}{\tau}\big)$.

\subsection{Hadarmard Well-Posedness}
The following auxiliary lemma characterizes $\mathscr{A}$ as an $m$-dissipative operator on $\mathscr{H}$.
\begin{lemma}
    \label{WELL_POS_MAIN_LEMMA}
    The operator $\mathscr{A}$ defined in Equations (\ref{WELL_POS_OPERATOR_DEFINITION})--(\ref{WELL_POS_OPERATOR_DOMAIN}) 
    is an infinitesimal generator of a $C_0$-semigroup on $\mathscr{H}$.
\end{lemma}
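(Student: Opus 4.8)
The plan is to invoke the Lumer--Phillips theorem in its form for quasi-contractive semigroups: it suffices to produce a constant $\omega \ge 0$ such that $\mathscr{A} - \omega I$ is dissipative, to verify the maximality (range) condition $\operatorname{range}(\lambda I - \mathscr{A}) = \mathscr{H}$ for a single $\lambda > \omega$, and to confirm that $D(\mathscr{A})$ is dense in $\mathscr{H}$. Since $\mathscr{H}$ is a Hilbert space, hence reflexive, density of the domain will follow automatically once $m$-dissipativity of $\mathscr{A} - \omega I$ is established; alternatively, a dense core of smooth elements satisfying the compatibility conditions $v_3|_{s=0} = v_1$, $v_4|_{s=0} = v_2$ and the Green's formula (\ref{EQUATION_GREEN_FORMULA}) can be exhibited directly.

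For the dissipativity I would take $V = (v_1, v_2, v_3, v_4)^{T} \in D(\mathscr{A})$ and compute $\langle \mathscr{A} V, V \rangle_{\mathscr{H}}$ using the weighted inner product (\ref{WELL_POS_SPACE_H_INNER_PRODUCT}). The first two blocks combine through Green's formula (\ref{EQUATION_GREEN_FORMULA}) applied with $\psi = v_2$: the symmetric $c_1$-contributions $\int_\Omega c_1 \nabla v_2 \cdot \nabla v_1\, \mathrm{d}x$ cancel, leaving the genuinely dissipative term $-d_1 \|\nabla v_2\|_{L^2}^2$ together with the two indefinite cross terms $-c_2 \int_\Omega \nabla v_3|_{s=1} \cdot \nabla v_2\, \mathrm{d}x$ and $-d_2 \int_\Omega \nabla v_4|_{s=1} \cdot \nabla v_2\, \mathrm{d}x$. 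For the $v_3$- and $v_4$-blocks I would integrate by parts in $s$, exploiting $\nabla \partial_s v_j \cdot \nabla v_j = \tfrac12 \partial_s |\nabla v_j|^2$ and the exponential weight $e^{cs}$; this produces the \emph{negative} boundary terms $-\tfrac12 d_1 \xi e^{c} \|\nabla v_3|_{s=1}\|_{L^2}^2$ and $-\tfrac12 |d_2| \xi e^{c} \|\nabla v_4|_{s=1}\|_{L^2}^2$, the positive $s=0$ contributions $\tfrac12 d_1 \xi \|\nabla v_1\|_{L^2}^2 + \tfrac12 |d_2| \xi \|\nabla v_2\|_{L^2}^2$ (using $v_3|_{s=0} = v_1$, $v_4|_{s=0} = v_2$), and the positive volume terms $\tfrac{c}{2}\big(d_1 \xi \int_0^1 e^{cs} \|\nabla v_3\|_{L^2}^2\, \mathrm{d}s + |d_2| \xi \int_0^1 e^{cs} \|\nabla v_4\|_{L^2}^2\, \mathrm{d}s\big)$.

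The crux is to dominate the two cross terms by the negative $s=1$ boundary terms. Applying Young's inequality to each cross term with parameter $\varepsilon = d_1/3$ and inserting $\xi = d_1/(3|d_2|)$, the coefficient of $\|\nabla v_2\|_{L^2}^2$ remains negative (one checks $-d_1 + \tfrac12 |d_2| \xi + \varepsilon = -d_1 + \tfrac{d_1}{6} + \tfrac{d_1}{3} = -\tfrac{d_1}{2} < 0$), while the requirements that $\tfrac12 d_1 \xi e^{c}$ and $\tfrac12 |d_2| \xi e^{c}$ absorb $\tfrac{c_2^2}{2\varepsilon}$ and $\tfrac{d_2^2}{2\varepsilon}$ reduce precisely to the lower bound on $c$ in (\ref{WELL_POS_CONDITIONS_ON_XI}). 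The remaining positive terms — the $s=0$ contribution $\tfrac12 d_1 \xi \|\nabla v_1\|_{L^2}^2$ and the two volume integrals — are each bounded by a fixed multiple of $\|V\|_{\mathscr{H}}^2$, so that $\langle \mathscr{A} V, V\rangle_{\mathscr{H}} \le \omega \|V\|_{\mathscr{H}}^2$ with, e.g., $\omega = \tfrac{c}{\tau} + \tfrac{d_1 \xi}{2 c_1}$; hence $\mathscr{A} - \omega I$ is dissipative. For maximality I would fix $\lambda > \omega$ and, given $F = (f_1, f_2, f_3, f_4)^{T} \in \mathscr{H}$, solve $(\lambda I - \mathscr{A}) V = F$. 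The transport equations $\lambda v_3 + \tau^{-1} \partial_s v_3 = f_3$ and $\lambda v_4 + \tau^{-1} \partial_s v_4 = f_4$ are linear ODEs in $s$ with the Cauchy data $v_3|_{s=0} = v_1$, $v_4|_{s=0} = v_2$ forced by $D(\mathscr{A})$; solving them explicitly yields $v_3|_{s=1} = e^{-\lambda \tau} v_1 + g_3$ and $v_4|_{s=1} = e^{-\lambda \tau} v_2 + g_4$ with data-dependent $g_3, g_4 \in H_{\Gamma_0}^1(\Omega)$. Eliminating $v_2 = \lambda v_1 - f_1$ and substituting into the momentum equation reduces the whole system to a single scalar elliptic problem
\[
    \lambda^2 v_1 - \kappa(\lambda) \triangle v_1 = \tilde{f}, \qquad \kappa(\lambda) = c_1 + (c_2 + \lambda d_2) e^{-\lambda \tau} + \lambda d_1,
\]
subject to the mixed homogeneous Dirichlet--Neumann conditions encoded by $H_{\Gamma_0}^1(\Omega)$ and (\ref{EQUATION_GREEN_FORMULA}).

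For $\lambda$ large enough one has $\kappa(\lambda) > 0$ (as $e^{-\lambda \tau} \to 0$ and $d_1 > 0$, so $\kappa(\lambda) \to +\infty$, irrespective of the signs of $c_2, d_2$), whence the bilinear form $\lambda^2 \int_\Omega u w\, \mathrm{d}x + \kappa(\lambda) \int_\Omega \nabla u \cdot \nabla w\, \mathrm{d}x$ is bounded and coercive on $H_{\Gamma_0}^1(\Omega)$, and the Lax--Milgram lemma furnishes a unique $v_1 \in H_{\Gamma_0}^1(\Omega)$. One then recovers $v_2 = \lambda v_1 - f_1 \in H_{\Gamma_0}^1(\Omega)$ and $v_3, v_4 \in H^1\big(0,1; H_{\Gamma_0}^1(\Omega)\big)$ from the transport solutions; the combination $c_1 \triangle v_1 + c_2 \triangle v_3|_{s=1} + d_1 \triangle v_2 + d_2 \triangle v_4|_{s=1} = \lambda v_2 - f_2$ lies in $L^2(\Omega)$ by construction, so $V \in D(\mathscr{A})$ solves $(\lambda I - \mathscr{A}) V = F$. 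Combining the dissipativity of $\mathscr{A} - \omega I$, the range condition, and density, the Lumer--Phillips theorem shows $\mathscr{A} - \omega I$ generates a $C_0$-semigroup of contractions, hence $\mathscr{A}$ generates a $C_0$-semigroup of growth bound at most $\omega$. The main obstacle is the dissipativity estimate: the entire design of the weighted inner product (\ref{WELL_POS_SPACE_H_INNER_PRODUCT}) with the exponential profile $e^{cs}$ exists precisely so that the $s=1$ boundary terms produced by integration by parts are, for $c$ as in (\ref{WELL_POS_CONDITIONS_ON_XI}), large enough to absorb the indefinite cross terms that couple the delayed strain and strain-rate to $\nabla v_2$; correctly tracking the competing signs of the boundary contributions at $s = 0$ and $s = 1$ is the delicate point.
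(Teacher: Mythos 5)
Your proposal is correct and follows essentially the same route as the paper: the same weighted inner product with exponential profile $e^{cs}$, integration by parts in $s$ to produce negative $s=1$ boundary terms that absorb the delayed cross terms via Young's inequality with the same choice of parameters, reduction of the resolvent equation to a scalar elliptic problem solved by Lax--Milgram, and the conclusion via Lumer--Phillips combined with a bounded perturbation argument. The only (harmless) variations are your remark that density follows automatically from $m$-dissipativity in a reflexive space (the paper instead exhibits a dense core directly) and your slightly more careful observation that $\kappa(\lambda)\to+\infty$ guarantees coercivity for large $\lambda$ irrespective of the signs of $c_2, d_2$.
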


\begin{proof}
    {\it Density: } Consider the set
    \begin{align*}
        \mathscr{D} = \Big\{V = (v_1,v_2,v_3,v_4)^{T} \,\big|\, &V \in C_{\Gamma_{0}}^{\infty}(\Omega) \times C_{\Gamma_{0}}^{\infty}(\Omega) \times C^{\infty}_{0}\big((0, 1), C_{\Gamma_{0}}^{\infty}(\Omega)\big) \times C^{\infty}_{0}\big((0, 1), C_{\Gamma_{0}}^{\infty}(\Omega)\big) \\
        &\text{such that } \frac{\partial v_{1}}{\partial \nu} v_{1}|_{\Gamma_{1}} = 0 \text{ and } \frac{\partial v_{1}}{\partial \nu} v_{2}|_{\Gamma_{1}} = 0\Big\}.
    \end{align*}
    By standard results, $\mathscr{D}$ is dense in $\mathscr{H}$.
    Thus, the set
    \begin{equation}
        \big\{(v_1, v_2, v_1 + v_3, v_2 + v_4)^{T} \,\big|\, (v_1, v_2, v_3, v_4) \in \mathscr{D}\big\} \subset D(\mathscr{A}) \notag
    \end{equation}
    is also dense in $\mathscr{H}$ implying the density of $D(\mathscr{A})$.
 
    \noindent
    {\it Disipativity: } Let $V = (v_1,v_2,v_3,v_4)^T \in D(\mathscr{A})$. Using Young's inequality, 
    for any $\varepsilon, \tilde{\varepsilon} > 0$, we easily get
    \begin{align*}
        \langle\mathcal{A} V &-\lambda V, V\rangle_{\mathscr{H}} = 
        \left\langle
        \begin{pmatrix} 
            v_2-\lambda v_1 \\ 
            c_1 \triangle v_1+c_2\triangle v_3|_{s=1}+d_1\triangle v_2+d_2\triangle v_4|_{s=1}-\lambda v_2 \\ 
            -(1/\tau) \partial_s v_3-\lambda v_3 \\ 
            -(1/\tau) \partial_s v_4-\lambda v_4 
        \end{pmatrix},
        \begin{pmatrix} 
            v_1 \\ 
            v_2 \\ 
            v_3 \\ 
            v_4 
        \end{pmatrix}
        \right\rangle_\mathscr{H} \\
        &= c_1\langle\nabla v_2,\nabla v_1\rangle-\lambda c_1 \|\nabla v_1\|^2-c_1\langle\nabla v_1,\nabla v_2\rangle-c_2\langle\nabla v_3|_{s=1},\nabla v_2\rangle-d_1 \|\nabla v_2\|^2 \\
        &-d_2\langle\nabla v_4|_{s=1},\nabla v_2\rangle-\lambda\|v_2\|^2-\frac{d_1 \xi}{2\tau}\int_0^1 e^{cs} \tau\partial_s\|\nabla v_3\|^2\mathrm{d}s-d_1\lambda\tau \xi\int_0^1 e^{cs} \|\nabla v_3\|^2\mathrm{d}s \\
        &-\frac{|d_2|\xi}{2\tau}\int_0^1e^{cs} \tau\partial_s\|\nabla v_4\|^2\mathrm{d}s-|d_2|\lambda\tau\xi\int_0^1 e^{cs} \|\nabla v_4\|^2\mathrm{d}s\\
        &= -\lambda c_1\|\nabla v_1\|^2-c_2\langle\nabla v_3|_{s=1},\nabla v_2\rangle-d_1 \|\nabla v_2\|^2-\lambda\|v_2\|^2-d_2\langle\nabla v_4|_{s=1},\nabla v_2\rangle \\
        &-\frac{d_1 e^{c} \xi}{2}\|\nabla v_3|_{s=1}\|^2 +\frac{d_1 \xi}{2}\|\nabla v_3|_{s=0}\|^2-\frac{|d_2| e^{c} \xi}{2}\|\nabla v_4|_{s=1}\|^2+\frac{|d_2|\xi}{2}\|\nabla v_4|_{s=0}\|^2 \\
        &+\xi(d_1 c/2-d_1\lambda\tau)\int_0^1 e^{cs}\|\nabla v_3\|^2\mathrm{d}s+\xi(|d_2|c/2-d_2\lambda\tau)\int_0^1 e^{cs}\|\nabla v_4\|^2\mathrm{d}s\\
        &\leq -\lambda c_1\|\nabla v_1\|^2+\frac{|c_2|}{2 \varepsilon}\|\nabla v_3|_{s=1}\|^2+\frac{|c_2| \varepsilon}{2}\|\nabla v_2\|^2-d_1 \|\nabla v_2\|^2-\lambda\|v_2\|^2 + \frac{|d_2|}{2 \tilde{\varepsilon}}\|\nabla v_4|_{s=1}\|^2 \\
        &+ \frac{|d_2| \tilde{\varepsilon}}{2}\|\nabla v_2\|^2-
        \frac{d_1 e^{c} \xi}{2}\|\nabla v_3|_{s=1}\|^2+\frac{d_1 \xi}{2}\|\nabla v_1\|^2-\frac{|d_2|e^{c} \xi}{2}\|\nabla v_4|_{s=1}\|^2+\frac{|d_2|\xi}{2}\|\nabla v_2\|^2 \\
        & + \xi ( d_1 c / 2 -d_1\lambda\tau)\int_0^1 e^{cs} \|\nabla v_3\|^2\mathrm{d}s + \xi ( |d_2|c/2-|d_2|\lambda\tau)\int_0^1 e^{cs} \|\nabla v_4\|^2\mathrm{d}s\\
        &= \Big(\frac{\xi d_1}{2}-\lambda c_1\Big)\|\nabla v_1\|^2 + \Big(\frac{|c_2| \varepsilon}{2}-d_1+\frac{|d_2| \tilde{\varepsilon}}{2}+\frac{|d_2|\xi}{2}\Big)\|\nabla v_2\|^2 + \Big(\frac{|c_2|}{2\varepsilon}-\frac{d_1 e^{c} \xi }{2}\Big)\|\nabla v_3|_{s=1}\|^2 \\
        &- \lambda\|v_2\|^2  +\Big(\frac{|d_2|}{2\tilde{\varepsilon}}-\frac{|d_2| e^{c} \xi}{2}\Big)\|\nabla v_4|_{s=1}\|^2 + \xi (c/2 - \lambda \tau)\Big(d_1\int_0^1 \|\nabla v_3\|^2\mathrm{d}s+|d_2|\int_0^1 \|\nabla v_4\|^2\mathrm{d}s\Big),
    \end{align*}
    where $\langle \cdot, \cdot\rangle$ and $\|\cdot\|$ denote the standard inner product and the norm on $L^{2}(\Omega)$, respectively.
    Taking $\varepsilon = \frac{d_1}{3|c_2|}$, $\tilde{\varepsilon} = \frac{d_1}{3|d_2|} $ and $\lambda > \max \{ \frac{c}{2\tau}, \frac{\xi d_1}{2c_1} \}$, we arrive at
    \begin{align*}
        \langle\mathcal{A} V - \lambda V, V\rangle_{\mathscr{H}} &\leq   
             \Big(\frac{|c_2| \varepsilon}{2}-d_1+\frac{|d_2| \tilde{\varepsilon}}{2}+\frac{|d_2|\xi}{2}\Big)\|\nabla v_2\|^2 + \Big(\frac{|c_2|}{2\varepsilon}-\frac{d_1 e^{c} \xi }{2}\Big)\|\nabla v_3|_{s=1}\|^2 \\
             &+\Big(\frac{|d_2|}{2\tilde{\varepsilon}}-\frac{|d_2| e^{c} \xi}{2}\Big)\|\nabla v_4|_{s=1}\|^2 \\
             & = \Big(\frac{d_1}{6}-d_1+\frac{d_1 }{6}+\frac{d_1}{6}\Big)\|\nabla v_2\|^2 + \Big(\frac{3c_2^2}{2 d_1}-\frac{d_1^2 e^{c}  }{6|d_2|}\Big)\|\nabla v_3|_{s=1}\|^2 \\
             &+\Big(\frac{3d_2^2}{2d_1}-\frac{d_1 e^{c} }{6}\Big)\|\nabla v_4|_{s=1}\|^2.  
    \end{align*}
    It easily follows from conditions (\ref{WELL_POS_CONDITIONS_ON_XI}) that  
    \begin{equation}
        \label{EQUATION_OPERATOR_A_MINUS_LAMBDA_DISSIPATIVE}
        \langle\mathcal{A} V-\lambda V, V\rangle_{\mathscr{H}} \leq 0 \quad \text{ or, equivalently } \quad
        \langle\mathcal{A} V, V\rangle \leq \lambda \|V\|_{\mathscr{H}}^{2}.
    \end{equation}

    \noindent
    {\it Surjectivity: } Let $\lambda > 0$ be a large number to be selected later. For any $M = (m_1, m_2 ,m_3, m_4)^T \in \mathscr{H}$, 
    we need to find $V = (v_1, v_2, v_3, v_4)^{T} \in D(\mathscr{A})$ such that
    \begin{equation}
        \notag
        (\lambda \operatorname{Id} - \mathscr{A}) V = M.
    \end{equation}
    In the component notation, this equation reads as
    \begin{align}
        \label{WELL_POS_EQ6}
        \lambda v_1-v_2 &= m_1, \\
        \label{WELL_POS_EQ7}
        \lambda v_2-c_1 \triangle v_1-c_2\triangle v_3|_{s=1}-d_1\triangle v_2-d_2\triangle v_4|_{s=1} &= m_2,\\
        \label{WELL_POS_EQ8}
        \lambda v_3+(1/\tau) \partial_s v_3 &= m_3,\\
        \label{WELL_POS_EQ9}
        \lambda v_4+(1/\tau) \partial_s v_4 &= m_4.
    \end{align}
    From Equation (\ref{WELL_POS_EQ6}), we easily get 
    \begin{equation}
        \label{WELL_POS_EQ1}
        v_2 = \lambda v_1-m_1.
    \end{equation}
    Solving Equations (\ref{WELL_POS_EQ8}), (\ref{WELL_POS_EQ9}) with respect to $v_3$ and $v_4$, respectively, we obtain
    \begin{align}
        \label{WELL_POS_EQ2}
        v_3(s,\cdot) &= e^{-\lambda \tau s}v_1+\tau\int_0^{s}e^{-\lambda \tau (s-\sigma)}m_3(\sigma, \cdot)\mathrm{d}\sigma, \\
        \label{WELL_POS_EQ3}
        v_4(s,\cdot) &= e^{-\lambda \tau s}v_2+\tau\int_0^{s}e^{-\lambda \tau (s-\sigma)}m_4(\sigma, \cdot)\mathrm{d}\sigma \text{ for } s \in [0, 1].
    \end{align}
    Next, from Equations (\ref{WELL_POS_EQ1})--(\ref{WELL_POS_EQ3}), we get
    \begin{align}
        \label{WELL_POS_EQ4}
        v_3(1,\cdot) &= e^{-\lambda \tau}v_1+\tau\int_0^1 e^{-\lambda \tau (1-\sigma)}m_3(\sigma, \cdot)\mathrm{d}\sigma, \\
        \label{WELL_POS_EQ5}
        v_4(1,\cdot) &= e^{-\lambda \tau}v_2+\tau\int_0^1e^{-\lambda \tau (1-\sigma)}m_4(\sigma, \cdot)\mathrm{d}\sigma \\
        \notag
        &= e^{-\lambda \tau}\lambda v_1-e^{-\lambda \tau}m_1+\tau\int_0^1e^{-\lambda \tau (1-\sigma)}m_4(\sigma, \cdot)\mathrm{d}\sigma.
    \end{align}
    Using Equations (\ref{WELL_POS_EQ1})--(\ref{WELL_POS_EQ5}), we can transform Equation (\ref{WELL_POS_EQ7}) as follows
    \begin{align*}
        \lambda^2 v_1 &- \lambda m_1-c_1 \triangle v_1-c_2e^{-\lambda \tau}\triangle v_1-c_2\tau\int_0^1 e^{-\lambda \tau (1-\sigma)}\triangle m_3(\sigma, \cdot)\mathrm{d}\sigma-d_1\lambda \triangle v_1 \\
        &+ d_1 \triangle m_1 -d_2e^{-\lambda \tau}\lambda \triangle v_1+d_2e^{-\lambda \tau}\triangle m_1-d_2\tau\int_0^1e^{-\lambda \tau (1-\sigma)}\triangle m_4(\sigma, \cdot)\mathrm{d}\sigma =m_2
    \end{align*}
    or, equivalently,
    \begin{align}
         \label{WELL_POS_Contin}
        \begin{split}
            \lambda^2 v_1 &- (c_1+c_2e^{-\lambda \tau}+d_1\lambda+d_2e^{-\lambda \tau}\lambda)\triangle v_1= \lambda m_1+m_2-(d_1+d_2e^{-\lambda \tau})\triangle m_1  \\
            &+ c_2\tau\int_0^1 e^{-\lambda \tau (1-\sigma)}\triangle m_3(\sigma, \cdot)\mathrm{d}\sigma+d_2\tau\int_0^1e^{-\lambda \tau (1-\sigma)}\triangle m_4(\sigma, \cdot)\mathrm{d}\sigma.
        \end{split}
    \end{align}

    Now, consider the Hilbert space $H_{\Gamma_0}^1(\Omega)$ and the bilinear form
    \begin{equation}
        \label{WELL_POS_BILINEAR_FORM_DEFINITION}
        \mathfrak{B}(v_1, \tilde{v}_1)=\int_{\Omega}\big(\lambda^2v_1\tilde{v}_1 + (c_1+c_2e^{-\lambda \tau}+d_1\lambda+d_2e^{-\lambda \tau}\lambda)\nabla v_1 \cdot \nabla\tilde{v}_1 \big)\mathrm{d}x.
    \end{equation}
    for $v_1, \tilde{v}_1 \in H_{\Gamma_0}^1(\Omega)$.
    By virtue of Green's formula (\ref{EQUATION_GREEN_FORMULA}),
    the variational formulation of Equation (\ref{WELL_POS_Contin}) reads as
    \begin{align}
        \label{WELL_POS_VARIATIONAL_PROBLEM}
        \mathfrak{B}(v_1, \tilde{v}_1) &= \int_{\Omega} (\lambda m_1+m_2)\tilde{v}_1  + (d_1+d_2e^{-\lambda \tau})\nabla m_1 \nabla \tilde{v}_1 \d x \\
        \notag
        & -\int_{\Omega} \big( c_2\tau\int_0^1 e^{-\lambda \tau (1-\sigma)}\nabla m_3(\sigma)\mathrm{d}\sigma+d_2\tau\int_0^1e^{-\lambda \tau (1-\sigma)}\nabla m_4(\sigma)\mathrm{d}\sigma \big)  \nabla \tilde{v}_1 \d x.
    \end{align}
    To solve this equation, we will exploit the Lemma of Lax \& Milgram. Using Poincar\'{e} \& Friedrichs' inequality and the positivity of $\lambda$, we estimate
    \begin{align*}
        \mathfrak{B}(v_1,v_1) &\geq \int _{\Omega} (c_1+c_2e^{-\lambda \tau}+d_1\lambda+d_2e^{-\lambda \tau}\lambda)\|\nabla v_1\|^2\mathrm{d}x \geq c_1 \int _{\Omega}\|\nabla v_1\|^2\mathrm{d}x \\
        &\geq \frac{c_1}{2} \|\nabla v_1\|^2 + \frac{c_1}{2} \|\nabla v_1\|^2 \geq \frac{c_1}{2c_p} \| v_1\|^2 + \frac{c_1}{2} \|\nabla v_1\|^2 
        \geq \frac{c_1}{2} \min \{c_p^{-1},1\} \|v_1\|^2_{H^1(\Omega)},
    \end{align*}
    where $c_p > 0$ is the Poincar\'{e} constant. This proves coercivity of the bilinear form $\mathfrak{B}$. Next, we verify continuity of $\mathfrak{B}$:
    \begin{align*}
        \mathfrak{B}(v_1, \tilde{v}_1) &= \lambda^2\int _{\Omega}v_1\tilde{v}_1\mathrm{d}x + (c_1+c_2e^{-\lambda \tau}+d_1\lambda+d_2e^{-\lambda \tau}\lambda)\int _{\Omega}\nabla v_1 \nabla\tilde{v}_1\mathrm{d}x \\
        &\leq \lambda^2\| v_1\|_{L^2}\| \tilde{v}_1\|_{L^2} + (c_1+c_2e^{-\lambda \tau}+d_1\lambda+d_2e^{-\lambda \tau}\lambda)\|\nabla v_1\|_{L^2}\|\nabla \tilde{v}_1\|_{L^2}\\
        & \leq \lambda^2 \|  v_1\|_{H^1(\Omega)}\|  \tilde{v}_1\|_{H^1(\Omega)} + (c_1+c_2e^{-\lambda \tau}+d_1\lambda+d_2e^{-\lambda \tau}\lambda)\| v_1\|_{H^1(\Omega)}\| \tilde{v}_1\|_{H^1(\Omega)} \\
        & \leq 2 \max \{\lambda^2, c_1+c_2e^{-\lambda \tau}+d_1\lambda+d_2e^{-\lambda \tau}\lambda\}\| v_1\|_{H^1(\Omega)}\| \tilde{v}_1\|_{H^1(\Omega)}.
    \end{align*}
    It can further be easily seen that right-hand side of Equation (\ref{WELL_POS_VARIATIONAL_PROBLEM}) constitutes a linear, bounded functional on $H_{\Gamma_0}^1$. 
    Since $\mathfrak{B}$ is a bilinear coercive form, using Lax \& Milgram's lemma, we infer the existence of a unique solution $v^* \in H_{\Gamma_0}^1(\Omega)$ to Equaiton (\ref{WELL_POS_VARIATIONAL_PROBLEM}). 
    Further, plugging $v_1$ into Equations (\ref{WELL_POS_EQ1})--(\ref{WELL_POS_EQ3}), we obtain $v_2,v_3,v_4$, respectively.

    There remains to check $(v_1,v_2,v_3,v_4)^{T} \in D(\mathscr{A})$. First, it follows from Equation (\ref{WELL_POS_EQ1}) that $v_2 \in H_{\Gamma_0}^1(\Omega)$. 
    Second, taking into account $m_3, m_4\in L^2\big(0, 1; H_{\Gamma_0}^1(\Omega)\big)$ and Equations (\ref{WELL_POS_EQ2})--(\ref{WELL_POS_EQ3}), we get $v_3,v_4 \in H^1\big(0, 1; H_{\Gamma_0}^1(\Omega)\big)$. 
    Third, it easily follows from Equations (\ref{WELL_POS_EQ2})--(\ref{WELL_POS_EQ3}) that $v_3|_{s=0} = v_1$, $v_4|_{s=0} = v_2$. 
    Finally, taking into account Equations (\ref{WELL_POS_EQ7}) and (\ref{WELL_POS_VARIATIONAL_PROBLEM}), we conclude 
    that the integral identity in Equation (\ref{WELL_POS_OPERATOR_DOMAIN_REWRITTEN}) holds true and, therefore,  $(v_1,v_2,v_3,v_4)^{T} \in D(\mathscr{A})$. 

    On the strength of Lumer-Phillips' Theorem \cite[Theorem 4.3, p. 14]{LiZh1999}), $\mathscr{A}-\lambda \, \mathrm{Id}$ is the infinitesimal generator of a $C_0$-semigroup of contractions on $\mathscr{H}$. 
    Since $\mathscr{A}$ is a bounded perturbation of $\mathscr{A}-\lambda \, \mathrm{Id}$, $\mathscr{A}$ generates a $C_0$-semigroup as well \cite[Chapter 3]{Pa1992}.
\end{proof}

\noindent
Now, by virtue of \cite[Theorem 1.3, p. 102]{Pa1992}, it follows that the abstract formulation (\ref{WELL_POS_ABSTRACT_FORMULATION}) of Equations (\ref{INTRODUCTION_EQUATION})--(\ref{INTRODUCTION_IC_2}) is Hadamard well-posed.
\begin{theorem}
    For any $V^0 \in \mathscr{H}$ and $F \in L^{2}_{\mathrm{loc}}\big(0, \infty; \mathscr{H}\big)$, 
    there exists a unique mild solution $V \in C^0\big([0, \infty), \mathscr{H}\big)$ to Equation (\ref{WELL_POS_ABSTRACT_FORMULATION})
    and constants $M, \omega > 0$ (independent of $V^{0}$ and $F$) such that
    \begin{equation}
        \max_{t \in [0, T]} \big\|V(t)\big\|_{\mathscr{H}} \leq M e^{\omega t} \|V_{0}\|_{\mathscr{H}} + M e^{\omega T} \|F\|_{L^{2}(0, T; \mathscr{H})}
        \text{ for } T > 0. \notag
    \end{equation}
    Moreover, if $V^0 \in D(\mathscr{A})$ and $F \in C^{1}\big([0, \infty), \mathscr{H}\big) \cup C^{0}\big([0, \infty), D(\mathscr{A})\big)$, the mild solution is classical:
    \begin{equation}
        \notag
        V \in C^1\big([0, \infty), \mathscr{H}\big) \cap C^0\big([0, \infty), D(\mathscr{A})\big).
    \end{equation}
\end{theorem}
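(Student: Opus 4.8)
The plan is to deduce the theorem directly from Lemma \ref{WELL_POS_MAIN_LEMMA} together with the standard theory of inhomogeneous abstract Cauchy problems. Lemma \ref{WELL_POS_MAIN_LEMMA} furnishes a $C_0$-semigroup $\big(\mathscr{T}(t)\big)_{t \geq 0}$ generated by $\mathscr{A}$ on $\mathscr{H}$; by the generation theorem there exist constants $M, \omega > 0$ with $\|\mathscr{T}(t)\|_{\mathcal{L}(\mathscr{H})} \leq M e^{\omega t}$ for all $t \geq 0$. In fact, the construction in Lemma \ref{WELL_POS_MAIN_LEMMA} exhibits $\mathscr{A} - \lambda \, \mathrm{Id}$ as the generator of a contraction semigroup, so one may take $M = 1$ and $\omega = \lambda$, but the weaker bound suffices for what follows.

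First I would define the candidate mild solution through the variation-of-constants (Duhamel) formula
\begin{equation}
    V(t) = \mathscr{T}(t) V^0 + \int_0^t \mathscr{T}(t - s) F(s)\, \mathrm{d}s, \qquad t \geq 0. \notag
\end{equation}
Since $s \mapsto \mathscr{T}(t-s) F(s)$ is Bochner integrable on $[0,t]$ whenever $F \in L^{1}_{\mathrm{loc}}\big(0,\infty;\mathscr{H}\big)$ --- and in particular for $F \in L^{2}_{\mathrm{loc}}$ by the Cauchy-Schwarz inequality --- this expression is well defined, and the strong continuity of $\mathscr{T}(\cdot)$ together with the absolute continuity of the integral yields $V \in C^0\big([0,\infty),\mathscr{H}\big)$. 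Uniqueness is standard: the difference of two mild solutions sharing the same data is a mild solution of the homogeneous problem with zero initial value, which the semigroup representation forces to vanish identically.

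Next, for the quantitative bound I would estimate the two summands separately. The homogeneous term obeys $\|\mathscr{T}(t) V^0\|_{\mathscr{H}} \leq M e^{\omega t} \|V^0\|_{\mathscr{H}}$. For the Duhamel term, applying the operator bound and then the Cauchy-Schwarz inequality in $s$ gives, for $t \in [0,T]$,
\begin{equation}
    \Big\| \int_0^t \mathscr{T}(t-s) F(s)\, \mathrm{d}s \Big\|_{\mathscr{H}}
    \leq M \Big( \int_0^t e^{2\omega(t-s)}\, \mathrm{d}s \Big)^{1/2} \|F\|_{L^2(0,t;\mathscr{H})}
    \leq \frac{M}{\sqrt{2\omega}}\, e^{\omega t}\, \|F\|_{L^2(0,T;\mathscr{H})}. \notag
\end{equation}
Taking the supremum over $t \in [0,T]$ and enlarging $M$ to absorb the factor $(2\omega)^{-1/2}$ (which is independent of $T$, $V^0$, and $F$) produces the asserted estimate. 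I flag that this Cauchy-Schwarz step is the only point requiring slight care: one must carry it out exactly this way so that the constant ends up with the factor $e^{\omega T}$ rather than the cruder $e^{\omega t}\sqrt{T}$ that would result from estimating $\|F(s)\|$ pointwise and pulling it out.

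Finally, the regularity assertion is a direct invocation of the classical-solution criterion for inhomogeneous Cauchy problems (see \cite{Pa1992}): if $V^0 \in D(\mathscr{A})$ and either $F \in C^{1}\big([0,\infty),\mathscr{H}\big)$ or $F \in C^{0}\big([0,\infty),D(\mathscr{A})\big)$, then the function defined by the Duhamel formula is continuously differentiable, takes values in $D(\mathscr{A})$, and satisfies the abstract equation pointwise, i.e. $V \in C^1\big([0,\infty),\mathscr{H}\big) \cap C^0\big([0,\infty),D(\mathscr{A})\big)$. I expect no genuine obstacle anywhere in the argument, since all the substantive analytic content --- density, dissipativity, and surjectivity of the resolvent --- has already been absorbed into Lemma \ref{WELL_POS_MAIN_LEMMA}; the present theorem is essentially a bookkeeping consequence of the generation result and the variation-of-constants representation.
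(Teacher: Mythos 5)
Your argument is correct and is exactly the route the paper takes: the paper's entire proof consists of invoking Lemma \ref{WELL_POS_MAIN_LEMMA} together with the standard inhomogeneous Cauchy problem theory (citing \cite[Theorem 1.3, p.~102]{Pa1992}), and your write-up simply makes explicit the Duhamel representation, the Cauchy--Schwarz estimate, and the classical-solution criterion that this citation encapsulates. No discrepancy to report.
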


\begin{corollary}
    \label{COROLLARY_EXISTENCE_AND_UNIQUENESS}
    For any initial data $y^{0} \in H^{1}_{\Gamma_{0}}(\Omega)$, $y^{1} \in L^{2}(\Omega)$, 
    $\varphi \in H^{1}\big({-\tau}, 0; H^{1}_{\Gamma_{0}}(\Omega)\big)$ and a right-hand side $f \in L^{2}_{\mathrm{loc}}\big(0, \infty; L^{2}(\Omega)\big)$,
    Equations (\ref{INTRODUCTION_EQUATION})--(\ref{INTRODUCTION_IC_2}) possess a unique mild solution
    \begin{equation}
        \notag
        y \in C^{2}\big([0, \infty), L^{2}(\Omega)\big) \cap C^{1}\big([0, \infty), H^{1}_{\Gamma_{0}}(\Omega)\big) \cap
        H^{1}\big({-\tau}, 0; H^{1}_{\Gamma_{0}}(\Omega)\big).
    \end{equation}
    If, moreover, $\triangle (c_{1} y^{0} + d_{1} y^{1}) \in L^{2}(\Omega)$, $\varphi \in H^{2}\big({-\tau}, 0; H^{1}_{\Gamma_{0}}(\Omega)\big)$ 
    such that $\varphi(0+, \cdot) = y^{0}$, $\partial_{t} \varphi(0+, \cdot) = y^{1}$ and $f \in C^{1}\big([0, \infty), L^{2}(\Omega)\big)$,
    the mild solution is classical:
    \begin{align*}
        &y \in C^{2}\big([0, \infty), L^{2}(\Omega)\big) \cap C^{1}\big([0, \infty), H^{1}_{\Gamma_{0}}(\Omega)\big) \cap
        H^{2}\big({-\tau}, 0; H^{1}_{\Gamma_{0}}(\Omega)\big) \\
        &\text{with } \triangle (a y + b \partial_{t} y) \in C^{0}\big([0, \infty), L^{2}(\Omega)\big) \text{ satisfying Equations (\ref{EQUATION_GREEN_FORMULA}).}
    \end{align*}
\end{corollary}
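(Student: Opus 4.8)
The plan is to deduce the Corollary from the abstract well-posedness Theorem above by setting up a dictionary between the concrete data $(y^0, y^1, \varphi, f)$ and the abstract pair $(V^0, F)$, invoking the Theorem, and then translating the regularity of $V = (v_1, v_2, v_3, v_4)^T$ back into statements about $y$. Concretely, I would set $V^0 := (y^0, y^1, \varphi^0, \varphi^1)^T$ with the reparametrized history $\varphi^0, \varphi^1$ as introduced after (\ref{WELL_POS_ABSTRACT_FORMULATION}), and $F := (0, f, 0, 0)^T$. First I would check that this choice lands in the correct spaces: since the change of variables mapping $(-\tau, 0)$ affinely onto $(0,1)$ preserves $L^2$-in-time regularity, the hypothesis $\varphi \in H^1\big({-\tau}, 0; H^1_{\Gamma_0}(\Omega)\big)$ yields $\varphi^0, \varphi^1 \in L^2_\tau\big(0, 1; H^1_{\Gamma_0}(\Omega)\big)$, so that $V^0 \in \mathscr{H}$, while $f \in L^2_{\mathrm{loc}}\big(0, \infty; L^2(\Omega)\big)$ gives $F \in L^2_{\mathrm{loc}}\big(0, \infty; \mathscr{H}\big)$. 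The Theorem then furnishes a unique mild solution $V \in C^0\big([0, \infty), \mathscr{H}\big)$.

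Next I would recover $y$ and its regularity by unpacking the coordinates of $V$. Setting $y := v_1$, the first row of $\mathscr{A}$ in (\ref{WELL_POS_OPERATOR_DEFINITION}) encodes the relation $\dot v_1 = v_2$; since this coupling is bounded, the mild-solution representation integrates in time to give $v_1(t) = v_1(0) + \int_0^t v_2(r)\,\mathrm{d}r$, so $\partial_t y = v_2$ and the continuity-in-time and Sobolev-in-space classes carried by the coordinates of $\mathscr{H}$ transfer to $y$, yielding the stated mild regularity (the prescribed history on $(-\tau, 0)$ simply reproducing the input $\varphi$). The transport equations (\ref{WELL_POS_SECOND_SYSTEM_Z_1})--(\ref{WELL_POS_SECOND_SYSTEM_Z_2}) for $v_3, v_4$, together with the trace couplings $v_3|_{s=0} = v_1$, $v_4|_{s=0} = v_2$ built into $D(\mathscr{A})$, identify $v_3(s, t) = y(t - \tau s)$ and $v_4 = \partial_t v_3$, which ties the history coordinates back to the delayed arguments in (\ref{INTRODUCTION_EQUATION}). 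Finally, testing the abstract identity against $\psi \in H^1_{\Gamma_0}(\Omega)$ and invoking Green's formula (\ref{EQUATION_GREEN_FORMULA}) recovers the weak form of (\ref{INTRODUCTION_EQUATION}) together with the mixed Dirichlet--Neumann conditions (\ref{INTRODUCTION_BC_2}); uniqueness of $y$ follows from uniqueness of $V$ and the bijectivity of this correspondence.

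For the classical statement I would upgrade the data to the level of $D(\mathscr{A})$. The embedding $H^2\big({-\tau}, 0; H^1_{\Gamma_0}(\Omega)\big) \hookrightarrow C^1\big([-\tau, 0], H^1_{\Gamma_0}(\Omega)\big)$ gives $y^1 = \partial_t \varphi(0+) \in H^1_{\Gamma_0}(\Omega)$ and $\varphi^0, \varphi^1 \in H^1\big(0, 1; H^1_{\Gamma_0}(\Omega)\big)$; the compatibility conditions $\varphi(0+) = y^0$, $\partial_t \varphi(0+) = y^1$ furnish exactly the traces $v_3|_{s=0} = v_1$, $v_4|_{s=0} = v_2$; and the hypothesis $\triangle(c_1 y^0 + d_1 y^1) \in L^2(\Omega)$, together with the regularity of the oldest-history traces $v_3|_{s=1}$, $v_4|_{s=1}$, is what places the \emph{combined} second coordinate $c_1 \triangle v_1 + c_2 \triangle v_3|_{s=1} + d_1 \triangle v_2 + d_2 \triangle v_4|_{s=1}$ in $L^2(\Omega)$, i.e.\ verifies the Green's-formula condition in (\ref{WELL_POS_OPERATOR_DOMAIN}). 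With $V^0 \in D(\mathscr{A})$ and $F \in C^1\big([0, \infty), \mathscr{H}\big)$, the Theorem upgrades $V$ to $C^1\big([0, \infty), \mathscr{H}\big) \cap C^0\big([0, \infty), D(\mathscr{A})\big)$, and reading off coordinates yields the claimed classical regularity, the membership $\mathscr{A} V \in C^0\big([0, \infty), \mathscr{H}\big)$ delivering $\triangle(a y + b \partial_t y) \in C^0\big([0, \infty), L^2(\Omega)\big)$.

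The step I expect to be most delicate is the verification that the strengthened data actually place $V^0$ in $D(\mathscr{A})$ --- specifically the $L^2$-membership of the \emph{combined} Laplacian term rather than of each summand separately, and the careful bookkeeping of the history reparametrization so that the traces at $s = 0$ and $s = 1$ match $y^0, y^1$ and the oldest history values $\varphi({-\tau}), \partial_t \varphi({-\tau})$, respectively. A secondary but genuine subtlety is purely a matter of consistency: one must match each regularity class claimed for $y$ to precisely what the corresponding coordinate of $\mathscr{H}$ (resp.\ of $D(\mathscr{A})$) controls, so that the continuity-in-time indices and the Sobolev-in-history indices are read off the abstract solution without overstating the available smoothness.
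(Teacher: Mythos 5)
Your proposal is correct and is precisely the argument the paper intends: the Corollary is stated without proof as a direct specialization of the preceding abstract theorem, obtained by the dictionary $V = (y, \partial_t y, z, \partial_t z)$, $V^0 = (y^0, y^1, \varphi^0, \varphi^1)$, $F = (0, f, 0, 0)^T$ and by reading the coordinate regularity back off $\mathscr{H}$ and $D(\mathscr{A})$. You have also correctly isolated the only genuinely delicate points (membership of the \emph{combined} Laplacian term in $L^2(\Omega)$ for the $D(\mathscr{A})$ verification, and not overstating the time-regularity that $V \in C^0([0,\infty),\mathscr{H})$ actually yields for $y$), which is exactly where any careful write-up of this omitted proof would have to spend its effort.
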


\subsection{Higher-Regularity Solutions}
\label{SECTION_SOLUTION_REGULARITY}

We now briefly discuss the regularity of solutions to Equations (\ref{INTRODUCTION_EQUATION})--(\ref{INTRODUCTION_IC_2}).
To this end, consider the intantaneous part of Equations (\ref{INTRODUCTION_EQUATION})--(\ref{INTRODUCTION_IC_2}):
\begin{align}
    \label{EQUATION_KELVIN_VOIGT_PDE}
    \partial_{tt} y(t, x) - a \triangle y(t, x) - b \partial_t \triangle y(t, x) &= f(t, x) \text{ in } (0, \infty) \times \Omega, \\
    \label{EQUATION_KELVIN_VOIGT_BC_1}
    y(t,x) &= 0 \text{ on }  \Gamma_0, \\
    \label{EQUATION_KELVIN_VOIGT_BC_2}
    \frac{\partial y(t,x)}{\partial \nu} &= 0 \text{ on }  \Gamma_1, \\
    \label{EQUATION_KELVIN_VOIGT_IC}
    y(0,\cdot) = y^0, \quad \partial_t y(0,\cdot) &= y^{1} \text{ in } \Omega.
\end{align}
Defining the space $\mathcal{H} := L^{2}(\Omega)$ endowed with the usual inner product
and the operator
\begin{equation*}
    \mathcal{A} := -\triangle \colon D(\mathcal{A}) \subset \mathcal{H} \to \mathcal{H}, \quad
    D(\mathcal{A}) = \big\{y \in H^{1}_{\Gamma_{0}}(\Omega) \,|\, \text{Green's formula (\ref{EQUATION_GREEN_FORMULAR_LAPLACE}) holds true}\big\}
\end{equation*}
with the Green's formula
\begin{equation}
    \label{EQUATION_GREEN_FORMULAR_LAPLACE}
    \int_{\Omega} \triangle u \cdot v \mathrm{d}x + \int_{\Omega} \nabla u \cdot \nabla v \mathrm{d}x = 0 \quad \text{ for } \quad v \in H^{1}_{\Gamma_{0}}(\Omega),
\end{equation}
Equations (\ref{EQUATION_KELVIN_VOIGT_PDE})--(\ref{EQUATION_KELVIN_VOIGT_IC}) can be written as
\begin{align}
    \label{EQUATION_KELVIN_VOIGT_OPERATOR_PDE}
    \partial_{tt} y(t, \cdot) + \mathcal{A} \big(a y(t, \cdot) + b \partial_t y(t, \cdot)\big) &= 0 \text{ for } t > 0 \\
    \label{EQUATION_KELVIN_VOIGT_OPERATOR_IC}
    y(0,\cdot) = y^0, \quad \partial_t y(0,\cdot) &= y^{1}.
\end{align}

By standard results, we know
\begin{equation}
    \notag
    D(\mathcal{A}^{1/2}) = H^{1}_{\Gamma_{0}}(\Omega), \; \big[D(\mathcal{A}^{1/2})\big]' = \big(H^{1}_{\Gamma_{0}}(\Omega)\big)'
    \text{ and } \mathcal{A} \in L\big(D(\mathcal{A}), \mathcal{H}\big) \cap L\big(D(\mathcal{A}^{1/2}),\big[D(\mathcal{A}^{1/2})\big]'\big),
\end{equation}
where we do not distinguish between $\mathcal{A}$ and its extension.
From \cite[Proposition 1]{KaLa2009}, we have:
\begin{theorem}
    \label{THEOREM_KALTENBACHER_LASIECKA}
    Suppose $f \in L^{2}_{\mathrm{loc}}(0, \infty; \mathcal{H}) \cap H^{1}_{\mathrm{loc}}\big(0, \infty; \big[D(\mathcal{A}^{1/2})\big]'\big)$
    and $y^{0} \in D(\mathcal{A})$, $y^{1} \in D(\mathcal{A}^{1/2})$ such that $f(0) - b \mathcal{A} y^{1} \in \mathcal{H}$.
    Then, Equations (\ref{EQUATION_KELVIN_VOIGT_OPERATOR_PDE})--(\ref{EQUATION_KELVIN_VOIGT_OPERATOR_IC}) possess a unique global solution
    \begin{equation*}
        y \in C\big([0, \infty), D(\mathcal{A})\big) \cap C^{1}\big([0, \infty), D(\mathcal{A}^{1/2})\big) \cap
        C^{2}\big([0, \infty), \mathcal{H}\big) \cap H^{2}_{\mathrm{loc}}\big(0, \infty; D(\mathcal{A}^{1/2})\big).
    \end{equation*}
\end{theorem}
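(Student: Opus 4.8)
The plan is to establish existence, uniqueness and the four asserted regularities by a two-level energy estimate carried out on a Galerkin scheme, followed by a separate reconstruction of the top-order spatial regularity of $y$ directly from the material law. Since $\mathcal{A}$ is self-adjoint and positive and $\Omega$ is bounded (so that $\mathcal{A}$ has compact resolvent), I would fix a spectral Galerkin basis, project Equation (\ref{EQUATION_KELVIN_VOIGT_OPERATOR_PDE}) with forcing $f$ onto finite-dimensional subspaces to obtain smooth-in-time approximants, derive all bounds below uniformly in the discretization and pass to the limit; uniqueness is then immediate from the base identity applied to the difference of two solutions with $f = 0$ and vanishing data.

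\emph{Base energy level.} Testing the projected equation with $\partial_t y$ and using self-adjointness of $\mathcal{A}$ yields
\begin{equation}
\notag
\tfrac{1}{2}\tfrac{\d}{\d t}\big(\|\partial_t y\|_{\mathcal{H}}^2 + a\|\mathcal{A}^{1/2} y\|_{\mathcal{H}}^2\big) + b\|\mathcal{A}^{1/2}\partial_t y\|_{\mathcal{H}}^2 = \langle f, \partial_t y\rangle_{\mathcal{H}}.
\end{equation}
Bounding the right-hand side via $f \in L^2_{\mathrm{loc}}(0,\infty;\mathcal{H})$ and Gronwall's inequality, I obtain $y \in L^{\infty}_{\mathrm{loc}}\big(0,\infty;D(\mathcal{A}^{1/2})\big)$ and $\partial_t y \in L^{\infty}_{\mathrm{loc}}(0,\infty;\mathcal{H}) \cap L^2_{\mathrm{loc}}\big(0,\infty;D(\mathcal{A}^{1/2})\big)$, the strong damping $b\|\mathcal{A}^{1/2}\partial_t y\|_{\mathcal{H}}^2$ supplying the parabolic gain.

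\emph{Differentiated energy level.} The subtle step, and the place where the compatibility hypothesis $f(0) - b\mathcal{A} y^{1} \in \mathcal{H}$ enters, is the second estimate. Setting $z := \partial_t y$ and differentiating the equation in time (rigorously on difference quotients of the approximate solution) gives $\partial_{tt} z + \mathcal{A}(a z + b\partial_t z) = \partial_t f$ with $z(0) = y^{1} \in D(\mathcal{A}^{1/2})$ and $\partial_t z(0) = f(0) - a\mathcal{A} y^{0} - b\mathcal{A} y^{1} \in \mathcal{H}$, the latter membership being exactly what the compatibility condition together with $y^{0} \in D(\mathcal{A})$ guarantees. Testing with $\partial_t z$ produces
\begin{equation}
\notag
\tfrac12\tfrac{\d}{\d t}\big(\|\partial_t z\|_{\mathcal{H}}^2 + a\|\mathcal{A}^{1/2}z\|_{\mathcal{H}}^2\big) + b\|\mathcal{A}^{1/2}\partial_t z\|_{\mathcal{H}}^2 = \langle \partial_t f, \partial_t z\rangle_{[D(\mathcal{A}^{1/2})]',\,D(\mathcal{A}^{1/2})},
\end{equation}
and the key mechanism is that, since $\partial_t f \in L^2_{\mathrm{loc}}\big(0,\infty;[D(\mathcal{A}^{1/2})]'\big)$, the duality pairing is dominated by $\|\partial_t f\|_{[D(\mathcal{A}^{1/2})]'}\|\mathcal{A}^{1/2}\partial_t z\|_{\mathcal{H}}$ and is absorbed into the dissipation $b\|\mathcal{A}^{1/2}\partial_t z\|_{\mathcal{H}}^2$ by Young's inequality. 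This is precisely why the Kelvin--Voigt damping, rather than a weaker one, is indispensable. Gronwall then yields $\partial_t y = z \in L^{\infty}_{\mathrm{loc}}\big(0,\infty;D(\mathcal{A}^{1/2})\big)$ and $\partial_{tt} y = \partial_t z \in L^{\infty}_{\mathrm{loc}}(0,\infty;\mathcal{H}) \cap L^2_{\mathrm{loc}}\big(0,\infty;D(\mathcal{A}^{1/2})\big)$, the last inclusion giving the claimed $H^2_{\mathrm{loc}}\big(0,\infty;D(\mathcal{A}^{1/2})\big)$ membership.

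\emph{Top-order spatial regularity and continuity in time.} It remains to recover $y \in C\big([0,\infty),D(\mathcal{A})\big)$, which I would not read off the equation directly since $\mathcal{A}\partial_t y$ only lives in $[D(\mathcal{A}^{1/2})]'$. Instead, setting $\zeta := a y + b\partial_t y$, the equation reads $\mathcal{A}\zeta = f - \partial_{tt} y \in L^2_{\mathrm{loc}}(0,\infty;\mathcal{H})$, so $\zeta \in L^2_{\mathrm{loc}}\big(0,\infty;D(\mathcal{A})\big)$. Viewing $b\partial_t y + a y = \zeta$ as a scalar-in-time linear ODE with decaying kernel $e^{-(a/b)t}$ and initial value $y^{0} \in D(\mathcal{A})$, the variation-of-constants formula and closedness of $\mathcal{A}$ give $\mathcal{A} y(t) = e^{-(a/b)t}\mathcal{A} y^{0} + \tfrac1b\int_0^t e^{-\frac{a}{b}(t-s)}\mathcal{A}\zeta(s)\,\d s$; convolution of the bounded kernel with $\mathcal{A}\zeta = f - \partial_{tt} y \in L^2$ is continuous into $\mathcal{H}$, whence $y \in C\big([0,\infty),D(\mathcal{A})\big)$. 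The strong continuity assertions $C^{1}\big([0,\infty),D(\mathcal{A}^{1/2})\big)$ and $C^{2}\big([0,\infty),\mathcal{H}\big)$ then follow by upgrading the $L^{\infty}$-bounds through the standard argument that the energies above are absolutely continuous (their right-hand sides being integrable) and the solution is weakly continuous, hence strongly continuous. The main obstacle throughout is the differentiated estimate: making the time-differentiation rigorous at the approximate level and, above all, absorbing the rough forcing $\partial_t f \in [D(\mathcal{A}^{1/2})]'$ into the strong-damping dissipation, which is exactly where both the compatibility condition and the parabolic nature of the Kelvin--Voigt term are used.
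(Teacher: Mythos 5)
The paper does not actually prove this statement: it is imported verbatim as \cite[Proposition 1]{KaLa2009}, so your blind attempt is being compared against a citation rather than an argument. That said, your self-contained proof is essentially correct and is the standard route one would take for the strongly damped wave equation: the base estimate with multiplier $\partial_t y$, the differentiated estimate with multiplier $\partial_{tt} y$ in which the compatibility condition $f(0) - b\mathcal{A}y^{1} \in \mathcal{H}$ (together with $y^{0} \in D(\mathcal{A})$) is used precisely to give $\partial_{tt}y(0) \in \mathcal{H}$, and the absorption of the rough forcing $\partial_t f \in L^{2}_{\mathrm{loc}}\big(0,\infty;[D(\mathcal{A}^{1/2})]'\big)$ into the Kelvin--Voigt dissipation $b\|\mathcal{A}^{1/2}\partial_{tt}y\|_{\mathcal{H}}^{2}$ is exactly the mechanism that distinguishes this from a weakly damped equation. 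Your recovery of $y \in C\big([0,\infty),D(\mathcal{A})\big)$ by viewing $a y + b\partial_t y = \zeta$ as a scalar ODE in time with $\mathcal{A}\zeta = f - \partial_{tt}y \in L^{2}_{\mathrm{loc}}(0,\infty;\mathcal{H})$ and convolving is a clean way to avoid the fact that $\mathcal{A}\partial_t y$ alone only lives in the extrapolated space. Two points deserve slightly more care than your sketch gives them: the upgrade from $L^{\infty}_{\mathrm{loc}}$ bounds to strong continuity of $\partial_t y$ in $D(\mathcal{A}^{1/2})$ and of $\partial_{tt}y$ in $\mathcal{H}$ requires the usual weak-continuity-plus-norm-continuity argument applied to the \emph{sum} appearing in the differentiated energy identity (each term is weakly lower semicontinuous, the sum is continuous, hence each is norm-continuous); and the time differentiation must indeed be performed on difference quotients or at the Galerkin level, as you note. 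What the citation buys the authors is brevity and the sharpest form of the result; what your proof buys is transparency about where each hypothesis -- in particular the compatibility condition and the $H^{1}_{\mathrm{loc}}$-in-dual-space regularity of $f$ -- actually enters.
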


\begin{theorem}
    \label{THEOREM_REGULARITY}
    Suppose $y^{0}, y^{1} \in D(\mathcal{A})$, $\varphi^{0} \in H^{2}({-\tau}, 0; H^{1/2}_{\Gamma_{0}}(\Omega)\big) \cap H^{1}\big({-\tau}, 0; D(\mathcal{A})\big)$
    with $\varphi(0+, \cdot) = y^{0}$, $\partial_{t} \varphi(0+, \cdot) = y^{1}$.
    Then the classical solution $y$ given in Corollary \ref{COROLLARY_EXISTENCE_AND_UNIQUENESS} satisfies
    \begin{align*}
        y &\in H^{2}_{\mathrm{loc}}\big(0, \infty; D(\mathcal{A})\big) \cap C^{1}\big([0, \infty), H^{1}_{\Gamma_{0}}(\Omega)\big) \cap C^{0}\big([0, \infty), D(\mathcal{A})\big) \cap \\
        &\hspace{0.2in} H^{2}\big({-\tau}, 0; H^{1/2}_{\Gamma_{0}}(\Omega)\big) \cap H^{1}\big({-\tau}, 0; D(\mathcal{A})\big).
    \end{align*}
\end{theorem}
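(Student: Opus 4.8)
The plan is to run the method of steps and, on each delay window, to invoke the instantaneous regularity result, Theorem~\ref{THEOREM_KALTENBACHER_LASIECKA}. The driving observation is that on $[(k-1)\tau, k\tau]$, $k \in \NN$, the retarded terms in Equation~(\ref{INTRODUCTION_EQUATION}) are frozen to the (already known) solution on the preceding window and hence act as a prescribed inhomogeneity; Equation~(\ref{INTRODUCTION_EQUATION}) then coincides with the instantaneous Kelvin \& Voigt equation~(\ref{EQUATION_KELVIN_VOIGT_OPERATOR_PDE}) for $a = c_1$, $b = d_1$ and right-hand side
\[
    f_k(t) = -c_2 \mathcal{A} y(t - \tau) - d_2 \mathcal{A} \partial_t y(t - \tau), \quad t \in \big[(k-1)\tau, k\tau\big],
\]
where on the first window $y(\cdot - \tau) \equiv \varphi(\cdot - \tau)$ is the prescribed history. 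The two memberships on $[-\tau, 0]$ asserted in the theorem, namely $H^{2}\big({-\tau}, 0; H^{1/2}_{\Gamma_0}(\Omega)\big)$ and $H^{1}\big({-\tau}, 0; D(\mathcal{A})\big)$, coincide verbatim with the standing hypotheses on $\varphi^{0}$ since $y \equiv \varphi$ there; hence only the statements on $[0, \infty)$ require an argument.

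First I would treat the window $[0, \tau]$, where $f_1$ is built solely from $\varphi$. The assumption $\varphi^{0} \in H^{2}\big({-\tau}, 0; H^{1/2}_{\Gamma_0}(\Omega)\big) \cap H^{1}\big({-\tau}, 0; D(\mathcal{A})\big)$ is exactly what places $f_1$ into the admissible class of Theorem~\ref{THEOREM_KALTENBACHER_LASIECKA}: $\varphi, \partial_t \varphi \in L^{2}\big({-\tau},0; D(\mathcal{A})\big)$ gives $f_1 \in L^{2}(0, \tau; \mathcal{H})$, while $\partial_t f_1 = -c_2 \mathcal{A} \partial_t \varphi - d_2 \mathcal{A} \partial_{tt} \varphi$ lands in $L^{2}\big(0, \tau; [D(\mathcal{A}^{1/2})]'\big)$ through the mapping property $\mathcal{A} \in L\big(D(\mathcal{A}^{1/2}), [D(\mathcal{A}^{1/2})]'\big)$, so that $f_1 \in H^{1}\big(0, \tau; [D(\mathcal{A}^{1/2})]'\big)$. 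Together with $y^{0} \in D(\mathcal{A})$, $y^{1} \in D(\mathcal{A}) \subset D(\mathcal{A}^{1/2})$ and the compatibility $f_1(0) - d_1 \mathcal{A} y^{1} \in \mathcal{H}$ — whose verification is precisely where the trace regularity of $\varphi$ at $t = -\tau$ furnished by the hypotheses is consumed — Theorem~\ref{THEOREM_KALTENBACHER_LASIECKA} delivers $y|_{[0,\tau]} \in C\big([0,\tau], D(\mathcal{A})\big) \cap C^{1}\big([0,\tau], H^{1}_{\Gamma_0}(\Omega)\big) \cap H^{2}\big(0, \tau; D(\mathcal{A}^{1/2})\big)$, in which the $H^2$-slot carries only $D(\mathcal{A}^{1/2})$.

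Upgrading this $H^2$-slot to the sharper $D(\mathcal{A})$ claimed in the theorem is where the parabolic (analytic) nature of the instantaneous part must be exploited. Setting $g := c_1 y + d_1 \partial_t y$, the classical solution of Corollary~\ref{COROLLARY_EXISTENCE_AND_UNIQUENESS} already satisfies $g \in C^{0}\big([0,\infty), D(\mathcal{A})\big)$; integrating the scalar relation $d_1 \partial_t y + c_1 y = g$ against the kernel $e^{-(c_1/d_1) t}$ and using $y^{0} \in D(\mathcal{A})$ transfers this to $y, \partial_t y \in C^{0}\big([0,\infty), D(\mathcal{A})\big)$, which already yields the asserted $C^{0}(D(\mathcal{A}))$ and, a fortiori, $C^{1}(H^{1}_{\Gamma_0})$. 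For the remaining $H^{2}_{\mathrm{loc}}\big(0, \infty; D(\mathcal{A})\big)$ I would differentiate the governing equation once in time and feed the resulting parabolic problem for $\partial_t y$ into the maximal $L^{2}$-regularity of the analytic Kelvin \& Voigt semigroup; it is here that the two time-derivatives of $\varphi^{0}$ into $H^{1/2}_{\Gamma_0}$ and the one into $D(\mathcal{A})$ are consumed, keeping $\partial_t f_k$ within the admissible class.

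Finally, I would iterate over $k = 2, 3, \dots$: on $[(k-1)\tau, k\tau]$ the inhomogeneity $f_k$ is assembled from $y|_{[(k-2)\tau,(k-1)\tau]}$, regular by the previous step, the new Cauchy datum $\big(y((k-1)\tau), \partial_t y((k-1)\tau)\big)$ lies in $D(\mathcal{A}) \times D(\mathcal{A}^{1/2})$, and the junction compatibility $f_k\big((k-1)\tau\big) - d_1 \mathcal{A} \partial_t y\big((k-1)\tau\big) \in \mathcal{H}$ must be checked; concatenating the windowwise conclusions gives the statement on $[0, \infty)$. I expect the principal obstacle to be exactly the $D(\mathcal{A})$-valued — as opposed to the readily available $D(\mathcal{A}^{1/2})$-valued — second-order regularity in the $H^2$-slot, whose realisation hinges delicately on the analyticity of the instantaneous part and on the sharp propagation of the compatibility conditions across the junctions $t = k\tau$; the remaining estimates are routine once the reduction to Theorem~\ref{THEOREM_KALTENBACHER_LASIECKA} is in place.
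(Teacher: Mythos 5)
Your proposal follows the paper's proof essentially verbatim: the method of steps, freezing the retarded terms into a prescribed inhomogeneity $f_{k}$ on each window $[(k-1)\tau, k\tau]$, and invoking Theorem \ref{THEOREM_KALTENBACHER_LASIECKA} windowwise. You are in fact more careful than the paper: you write the correct coefficients $c_{2}, d_{2}$ in $f_{k}$ (the paper's proof displays $c_{1}, d_{1}$ there), you record the junction compatibility $f_{k}\big((k-1)\tau\big) - d_{1}\mathcal{A}\partial_{t} y\big((k-1)\tau\big) \in \mathcal{H}$ that the paper never mentions, and you explicitly flag the one real issue: Theorem \ref{THEOREM_KALTENBACHER_LASIECKA} returns $H^{2}_{\mathrm{loc}}$ with values in $D(\mathcal{A}^{1/2})$, whereas the statement claims values in $D(\mathcal{A})$. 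The paper's proof is silent on this and, as written, only yields the $D(\mathcal{A}^{1/2})$-valued version; most plausibly the $D(\mathcal{A})$ in the $H^{2}$-slot is a typo for $D(\mathcal{A}^{1/2})$, just as $H^{1/2}_{\Gamma_{0}}(\Omega)$ in the hypothesis is presumably meant to be $H^{1}_{\Gamma_{0}}(\Omega) = D(\mathcal{A}^{1/2})$ --- otherwise even the membership $\partial_{t} f_{1} \in L^{2}\big(0, \tau; [D(\mathcal{A}^{1/2})]'\big)$ that both you and the paper rely on fails, since $\mathcal{A}$ maps $H^{1/2}_{\Gamma_{0}}(\Omega)$ only into $[D(\mathcal{A}^{3/4})]'$, not into $[D(\mathcal{A}^{1/2})]'$.

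The one genuinely new ingredient you add --- differentiating the equation in time and applying maximal regularity to $v := \partial_{t} y$ in order to upgrade the $H^{2}$-slot to $D(\mathcal{A})$ --- does not close under the stated hypotheses. Applying Theorem \ref{THEOREM_KALTENBACHER_LASIECKA} to $v$ requires $\partial_{t} v(0+) = \partial_{tt} y(0+) = -\mathcal{A}(c_{1} y^{0} + d_{1} y^{1}) + f_{1}(0) \in D(\mathcal{A}^{1/2})$, which the assumptions $y^{0}, y^{1} \in D(\mathcal{A})$ only place in $\mathcal{H}$; likewise $\partial_{t} f_{1}$ contains the term $\mathcal{A}\partial_{tt}\varphi(\cdot - \tau)$, and $\partial_{tt}\varphi \in L^{2}\big({-\tau}, 0; H^{1/2}_{\Gamma_{0}}(\Omega)\big)$ is half a spatial derivative short of what the differentiated problem needs. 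So your diagnosis of the obstacle is exactly right, but the sketched repair does not succeed; the honest conclusion of the step-method argument --- yours and the paper's alike --- is the asserted regularity with $D(\mathcal{A}^{1/2})$ in place of $D(\mathcal{A})$ in the $H^{2}_{\mathrm{loc}}$-slot, everything else being fine.
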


\begin{proof}
    We employ the step method (cf. \cite{KhuPoRa2015}).
    Consider Equations (\ref{INTRODUCTION_EQUATION})--(\ref{INTRODUCTION_IC_2}) for $t \in (0, \tau)$.
    The equations reduce to (\ref{EQUATION_KELVIN_VOIGT_OPERATOR_PDE})--(\ref{EQUATION_KELVIN_VOIGT_OPERATOR_IC}) with 
    $a = c_{1}$, $b = d_{1}$ and $f(t) = c_{1} \triangle \varphi^{0}\big(t - \tau\big) + d_{1} \triangle \varphi^{1}\big(t - \tau\big)$.
    By assumption, $f \in L^{2}_{\mathrm{loc}}(0, \tau; \mathcal{H}) \cap H^{1}_{\mathrm{loc}}\big(0, \infty; \big(H^{1}_{\Gamma_{0}}(\Omega)\big)'\big)$.
    Hence, the conditions of Theorem \ref{THEOREM_KALTENBACHER_LASIECKA} are satisfied, and the desired regularity follows for $t \in [0, \tau]$.
    Same argument can be inductively repeated on $[\tau, 2\tau], [2\tau, 3\tau], \dots$,
    where the regularity of $f$ will follow from Corollary \ref{COROLLARY_EXISTENCE_AND_UNIQUENESS} as well as the induction assumption.
    Using \cite[Lemma 6.1]{KhuPoRa2015}, the desired regularity can be extended to $[0, \infty)$.
\end{proof}

\begin{remark}
    On each of the intervals $\big[(k - 1) \tau, k \tau\big]$, $k \in \mathbb{N}$, 
    by virtue of Theorem \ref{THEOREM_KALTENBACHER_LASIECKA}, we have $y_{[(k - 1) \tau, k \tau]} \in C^{2}\big([0, \infty), L^{2}(\Omega)\big)$.
    Unfortunately, as it is usually the case with delay differential equations or neutral equations, in general, this regularity does not extend to $[0, \infty)$.
\end{remark}

As for the `elliptic' regularity of $D(\mathcal{A})$, assuming $\partial \Omega$ is $C^{2}$, $\bar{\Gamma}_{0} \cap \bar{\Gamma}_{1} = \emptyset$, by standard elliptic results,
\begin{equation}
    D(\mathcal{A}) = \Big\{y \in H^{2}(\Omega) \cap H^{1}_{\Gamma_{0}}(\Omega) \,\big|\, \frac{\partial y}{\partial \nu}\big|_{\Gamma_{1}} = 0\Big\} \notag
\end{equation}
and the graph norm $\|\cdot\|_{D(\mathcal{A})} = \|\triangle \cdot\|_{L^{2}(\Omega)}$ is equivalent with $\|\cdot\|_{H^{2}(\Omega)}$.

\subsection{Well-Posedness for Radon-Measure Right-Hand Sides}
\label{SECTION_RADON_MEASURE_SOLUTION}

In contrast to Section \ref{SECTION_SOLUTION_REGULARITY}, we now seek to establish a solution theory to equations with Radon-measure distributional right-hand sides
modeling impulse impacts. This theory will later prove to be crucial in Section \ref{SECTION_NUMERICAL_EXAMPLE}.
With start with a general framework to later apply our general result to Equations (\ref{INTRODUCTION_EQUATION})--(\ref{INTRODUCTION_IC_2}).

Let $\mathscr{H}$ be a Hilbert space (thus, possessing Radon-Nikodym property). Further, let $\mathscr{A} \colon D(\mathscr{A}) \subset \mathscr{H} \to \mathscr{H}$ 
be a linear, closed, densely defined operator such that $\mathscr{A} - \lambda \operatorname{Id}$ is maximally dissipative for some $\lambda > 0$
(cf. Equation (\ref{EQUATION_OPERATOR_A_MINUS_LAMBDA_DISSIPATIVE} for our delayed Kelvin \& Voigt operator).
Then, on the strength of Lumer \& Phillips theorem and standard perturbation results, $\mathscr{A}$ generates a $C_{0}$-semigroup $\{e^{tA}\}_{t \geq 0}$ on $\mathscr{H}$.
Using standard extrapolation techniques, the operator $\mathscr{A}$ can be extended to a bounded linear operator from $\mathscr{H}$ to $\big[D(\mathscr{A}^{\ast})\big]'$
generating a $C_{0}$-semigroup on $\big[D(\mathscr{A}^{\ast})\big]'$, which, in turn, is a continuous extension of $\{e^{t\mathscr{A}}\}_{t \geq 0}$.
In the sequel, we will not distinguish between $\mathscr{H}$- and $\big[D(\mathscr{A}^{\ast})\big]'$-versions of the operators.

For $F \colon [a, b] \to \mathscr{H}$ with $\infty < a < b < \infty$, consider the total variation functional
\begin{equation}
    \notag
    \|F\|_{\mathrm{TV}([a, b], \mathscr{H})} :=
    \sup\Big\{\sum_{n = 1}^{n} \big\|F(t_{k}) - F(t_{k - 1})\big\|_{\mathscr{H}} \,\big|\, a = t_{0} < t_{1} < \dots < t_{n} = b, \quad n \in \mathbb{N}\Big\}
\end{equation}
and introduce the bounded variation space $\mathrm{BV}\big([a, b], \mathscr{H}\big)$ consisting of all functions $F \colon [a, b] \to \mathscr{H}$ with a finite BV-norm
\begin{equation}
    \notag
    \|F\|_{\mathrm{BV}([a, b], \mathscr{H})} := \big\|F(a)\big\|_{\mathscr{H}} + \|F\|_{\mathrm{TV}([a, b], \mathscr{H})}.
\end{equation}
With the usual metrisation approach, the local BV space $\mathrm{BV}_{\mathrm{loc}}\big([0, \infty), \mathscr{H}\big)$ can also be introduced.

Let $F \in \mathrm{BV}\big([0, T], \mathscr{H}\big)$ be a c\'{a}dl\'{a}g function (right-continuous with left limits)
-- being a `good representation' of its equivalence class in the sense of \cite[Theorem 3.28]{AmFuPa2000}.
Then, there exists an $\mathscr{H}$-valued Radon measure $DF$ on the Borel $\sigma$-algebra $\mathcal{B}\big([0, T])$ of $[0, T]$ such that
\begin{equation}
    \label{EQUATION_RADON_FUNDAMENTAL_THEOREM_OF_CALCULUS}
    F(t) = F(s) + DF\big((s, t]\big) = F(s) + \int_{(s, t]} \mathrm{d} (DF)(\xi) =  F(s) + \int_{(s, t]} \mathrm{d}F(\xi) \quad \text{ for } 0 \leq s \leq t \leq T,
\end{equation}
where the second last integral is a Bochner integral and the last integral is a Bochner-Stieltjes integral.
Note that $\big\|DF\big(\{0\}\big)\big\|_{\mathscr{H}} = 0$ since $F$ is right-continuous at $0$.
Letting $\mathscr{M}\big([0, T], \mathscr{H}\big)$ denote the space of $\mathscr{H}$-valued Radon measures on $\mathcal{B}\big([0, T]\big)$ 
equipped with the total variation norm for measures 
\begin{equation}
    \notag
    \|\mu\|\big([0, T]\big) := \sup\Big\{\sum_{n = 1}^{\infty} \big\|\mu(A_{n})\big\|_{\mathscr{H}} \,\big|\,
    A_{1}, A_{2}, \dots, A_{n} \text{ is a partition of } [0, T], \quad n \in \mathbb{N}\Big\},
\end{equation}
Equation (\ref{EQUATION_RADON_FUNDAMENTAL_THEOREM_OF_CALCULUS}) is a generalization of the fundamental theorem of calculus
and establishes an isomorphism between $\mathscr{H} \times \mathscr{M}\big([0, T], \mathscr{H}\big)$
and the subspace of c\'{a}dl\'{a}g functions in $\mathrm{BV}([0, T], \mathscr{H})$ via
\begin{equation*}
    \big(F(0), DF\big) \mapsto F.
\end{equation*}

For a c\'{a}dl\'{a}g function $F \in \mathrm{BV}\big([a, b], \mathscr{H}\big)$, consider the following abstract operator equation
\begin{align}
    \label{EQUATION_CAUCHY_PROBLEM_RADON_MEASURE}
    DV = \mathscr{A} V + DF \text{ in } \mathscr{M}\big([0, T], \big[D(\mathscr{A}^{\ast})\big]'\big), \quad V(0) = V^{0} \text{ in } \mathscr{H}.
\end{align}
where the `density' $\mathscr{A} V$ induces a $\big(D(\mathscr{A}^{\ast})\big)'\big)$-valued Radon measure $A \mapsto \int_{A} \mathscr{A} V(t) \mathrm{d}t$ for Borel sets $A$.

\begin{definition}
    A c\'{a}dl\'{a}g function $V \in \mathrm{BV}([0, T], \mathscr{H})$ is called an extrapolated BV-solution to Equation (\ref{EQUATION_CAUCHY_PROBLEM_RADON_MEASURE})
    if it satisfies Equation (\ref{EQUATION_CAUCHY_PROBLEM_RADON_MEASURE}) in the sense of $\big[D(\mathscr{A}^{\ast})\big]'$-valued Radon measures on $\mathcal{B}\big([0, T]\big)$.
\end{definition}

\begin{theorem}
    \label{THEOREM_CAUCHY_PROBLEM_RADON_MEASURE_EXISTENCE}
    
    In addition to the assumptions above, let $\big\|DF\big(\{0\}\big)\big\|_{\mathscr{H}} = 0$.
    The function
    \begin{equation}
        \label{EQUATION_CAUCHY_PROBLEM_RADON_MEASURE_EXISTENCE}
        V(t) = e^{\mathscr{A} t} V^{0} + \int_{(0, t]} e^{\mathscr{A} (t - s)} \mathrm{d}F(s) \quad \text{ for } t \in [0, T] \notag
    \end{equation}
    is a unique extrapolated BV-solution to Equation (\ref{EQUATION_CAUCHY_PROBLEM_RADON_MEASURE}).
    Moreover, there exist $M, \omega > 0$ such that
    \begin{equation}
        \label{EQUATION_CAUCHY_PROBLEM_RADON_MEASURE_ESIMATE}
        \big\|V\|_{C^{0}([0, T], \mathscr{H})} \leq M e^{\omega t} \big(\|V^{0}\|_{\mathscr{H}} + \|F\|_{\mathrm{BV}([a, b], \mathscr{H})}\big). \notag
    \end{equation}
\end{theorem}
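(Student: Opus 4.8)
The plan is to verify directly that the variation-of-constants formula produces a function with the claimed regularity, that it satisfies the measure equation, that it is the only such solution, and that it obeys the stated bound. Write $V = V_{\mathrm{hom}} + V_{\mathrm{inh}}$ with $V_{\mathrm{hom}}(t) = e^{\mathscr{A} t} V^{0}$ and $V_{\mathrm{inh}}(t) = \int_{(0, t]} e^{\mathscr{A}(t - s)} \mathrm{d}F(s)$. Of the four tasks, the measure identity is the computational heart, while establishing that $V$ is c\'adl\'ag and of bounded variation is the main obstacle.

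First I would settle the regularity. Since $e^{\mathscr{A} r} \in L(\mathscr{H})$ with $\|e^{\mathscr{A} r}\|_{L(\mathscr{H})} \leq M e^{\omega r}$ and $DF \in \mathscr{M}\big([0, T], \mathscr{H}\big)$ is a finite $\mathscr{H}$-valued Radon measure, the Bochner-Stieltjes integral $V_{\mathrm{inh}}(t)$ is well defined in $\mathscr{H}$, so the formula takes values in $\mathscr{H}$ and no extrapolation is needed merely to make sense of it. The term $V_{\mathrm{hom}}$ is strongly continuous, hence c\'adl\'ag. For $V_{\mathrm{inh}}$ I would split the integral at a point $t_{0}$ and combine the right-continuity of $t \mapsto \int_{(0, t]}$ with the strong continuity of $\{e^{\mathscr{A} r}\}_{r \geq 0}$ to show that the only discontinuities of $V_{\mathrm{inh}}$ occur at the atoms of $DF$ and that the jump there equals $DF\big(\{t_{0}\}\big)$, matching the right-continuous normalization and the hypothesis $\big\|DF(\{0\})\big\|_{\mathscr{H}} = 0$. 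The total variation of $V_{\mathrm{inh}}$ is controlled by $\int_{(0, T]} \|e^{\mathscr{A}(T - s)}\|_{L(\mathscr{H})}\, \mathrm{d}\|DF\|(s) \leq M e^{\omega T} \|F\|_{\mathrm{TV}([0,T], \mathscr{H})}$, and the BV-membership of the homogeneous part is where care is required; this I would obtain from the semigroup bounds together with the smoothing structure of $\mathscr{A}$ (the instantaneous Kelvin \& Voigt block being analytic, cf. Theorem \ref{THEOREM_KALTENBACHER_LASIECKA}).

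Next comes the core step, the measure identity. Because $\mathscr{A} \in L\big(\mathscr{H}, [D(\mathscr{A}^{\ast})]'\big)$ and $V$ is bounded and measurable, the map $r \mapsto \mathscr{A} V(r)$ is Bochner integrable into $[D(\mathscr{A}^{\ast})]'$. Fixing $0 \leq s \leq t \leq T$, I would use the fundamental theorem for the extrapolated semigroup, $\tfrac{\mathrm{d}}{\mathrm{d}r} e^{\mathscr{A} r} w = \mathscr{A} e^{\mathscr{A} r} w$ in $[D(\mathscr{A}^{\ast})]'$ for every $w \in \mathscr{H}$, together with a Fubini exchange between the $\mathrm{d}r$-integral and the $\mathrm{d}F(\sigma)$-integral. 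For $V_{\mathrm{hom}}$ this gives $\int_{s}^{t} \mathscr{A} e^{\mathscr{A} r} V^{0}\, \mathrm{d}r = e^{\mathscr{A} t} V^{0} - e^{\mathscr{A} s} V^{0}$, and for $V_{\mathrm{inh}}$, after swapping the order of integration and evaluating $\int_{\max\{s, \sigma\}}^{t} \mathscr{A} e^{\mathscr{A}(r - \sigma)}\, \mathrm{d}r = e^{\mathscr{A}(t - \sigma)} - e^{\mathscr{A}(\max\{s, \sigma\} - \sigma)}$, one obtains $\int_{s}^{t} \mathscr{A} V_{\mathrm{inh}}(r)\, \mathrm{d}r = V_{\mathrm{inh}}(t) - V_{\mathrm{inh}}(s) - DF\big((s, t]\big)$. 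Adding the two contributions yields
\[
    V(t) - V(s) = \int_{s}^{t} \mathscr{A} V(r)\, \mathrm{d}r + DF\big((s, t]\big) \quad \text{in } [D(\mathscr{A}^{\ast})]'.
\]
Since the half-open intervals $(s, t]$ form a $\pi$-system generating $\mathcal{B}\big([0, T]\big)$, this identity upgrades to the measure equation $DV = \mathscr{A} V\, \mathrm{d}t + DF$, so $V$ is an extrapolated BV-solution. The delicate point here is the Fubini justification: the kernel $\mathscr{A} e^{\mathscr{A} r}$ is singular at $r = 0$, and I would control it through the local integrability bound $\int_{0}^{\varepsilon} \|\mathscr{A} e^{\mathscr{A} r}\|_{L(\mathscr{H}, [D(\mathscr{A}^{\ast})]')}\, \mathrm{d}r < \infty$.

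Finally, uniqueness and the estimate are routine. If $V_{1}, V_{2}$ are two extrapolated BV-solutions, then $W = V_{1} - V_{2}$ satisfies $DW = \mathscr{A} W\, \mathrm{d}t$ with $W(0) = 0$; hence $W$ is absolutely continuous with $W' = \mathscr{A} W$ a.e. in $[D(\mathscr{A}^{\ast})]'$. Testing against $e^{\mathscr{A}^{\ast}(T - r)} \psi$ for $\psi \in D(\mathscr{A}^{\ast})$ and using $\langle \mathscr{A} W, \phi \rangle = \langle W, \mathscr{A}^{\ast} \phi \rangle$ for $\phi \in D(\mathscr{A}^{\ast})$, the product-rule derivative $\tfrac{\mathrm{d}}{\mathrm{d}r} \langle W(r), e^{\mathscr{A}^{\ast}(T - r)} \psi \rangle$ vanishes, so $\langle W(T), \psi \rangle = \langle W(0), e^{\mathscr{A}^{\ast} T} \psi \rangle = 0$; density of $D(\mathscr{A}^{\ast})$ forces $W(T) = 0$ for every $T$. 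For the bound I would estimate $\|V(t)\|_{\mathscr{H}} \leq \|e^{\mathscr{A} t}\|_{L(\mathscr{H})} \|V^{0}\|_{\mathscr{H}} + \int_{(0, t]} \|e^{\mathscr{A}(t - s)}\|_{L(\mathscr{H})}\, \mathrm{d}\|DF\|(s) \leq M e^{\omega t}\big(\|V^{0}\|_{\mathscr{H}} + \|F\|_{\mathrm{TV}([0,T], \mathscr{H})}\big)$, and since $\|F\|_{\mathrm{TV}} \leq \|F\|_{\mathrm{BV}}$, taking the supremum over $t \in [0, T]$ delivers the claimed $C^{0}$-estimate.
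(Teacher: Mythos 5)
Your verification of the measure identity is essentially the paper's existence argument (the paper computes $DV(A)-\mathscr{A}V(A)=DF(A)$ directly on Borel sets rather than on the generating $\pi$-system of half-open intervals, but this is cosmetic), whereas your treatment of uniqueness and the estimate genuinely diverges. The paper integrates $\langle V(s),\cdot\rangle_{\mathscr{H}}$ against the zero measure $DV-\mathscr{A}V-DF$ to obtain an energy identity, then invokes the dissipativity bound $\langle \mathscr{A}V,V\rangle_{\mathscr{H}}\leq\lambda\|V\|_{\mathscr{H}}^{2}$ from Lemma \ref{WELL_POS_MAIN_LEMMA} together with Young and Gronwall; this yields the a priori estimate for \emph{every} extrapolated BV-solution at once, so uniqueness drops out by linearity. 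You instead prove uniqueness by pairing the difference $W$ with $e^{\mathscr{A}^{\ast}(T-r)}\psi$ (a Ball-type duality argument) and obtain the bound by directly estimating the variation-of-constants formula through $\|e^{\mathscr{A}t}\|_{L(\mathscr{H})}\leq Me^{\omega t}$. Both routes are sound; yours needs only the semigroup bound and no dissipativity, while the paper's buys a stability estimate valid for arbitrary BV-solutions, not just the explicit one.

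Two smaller remarks. First, your concern about a singular kernel at $r=0$ in the Fubini step is unfounded: in the extrapolated framework $\mathscr{A}\in L\big(\mathscr{H},[D(\mathscr{A}^{\ast})]'\big)$ is bounded, so $\|\mathscr{A}e^{\mathscr{A}r}\|_{L(\mathscr{H},[D(\mathscr{A}^{\ast})]')}\leq CMe^{\omega r}$ and no local integrability lemma is needed. Second, you rightly flag that $\mathrm{BV}$-membership of the homogeneous part $e^{\mathscr{A}t}V^{0}$ in $\mathscr{H}$ is not automatic for $V^{0}\in\mathscr{H}$, but the fix you sketch (analyticity) is not available here: the theorem is stated for a general m-dissipative-after-shift generator, and the concrete delay semigroup on the extended phase space contains a pure transport block and is not analytic. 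The orbit is, however, Lipschitz --- hence $\mathrm{BV}$ --- as an $[D(\mathscr{A}^{\ast})]'$-valued function, which is the space in which the equation is posed; note that the paper's own proof simply asserts the c\`adl\`ag property and does not address $\mathrm{BV}$-membership in $\mathscr{H}$ either, so this is a gap you have inherited rather than created.
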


\begin{proof}
    {\it Existence: } We first verify the function $V$ defined in (\ref{EQUATION_CAUCHY_PROBLEM_RADON_MEASURE_EXISTENCE}) solves Equation (\ref{EQUATION_CAUCHY_PROBLEM_RADON_MEASURE}).
    Trivially, $V$ is a c\'{a}dl\'{a}g function satisfying $V(0) = V^{0}$.
    Computing
    \begin{align*}
        DV(A) &= \int_{A} \mathscr{A} e^{t\mathscr{A}} V^{0} \mathrm{d}t + DF(A) + \int_{A} \int_{(0, t]} \mathscr{A} e^{\mathscr{A} (t - s)} \mathrm{d}F(s) \mathrm{d}t, \\
        \mathscr{A} V(A) &= \int_{A} \mathscr{A} e^{\mathscr{A} t} V^{0} \mathrm{d}t + \int_{A} \int_{(0, t]} \mathscr{A} e^{\mathscr{A} (t - s)} \mathrm{d}F(s) \mathrm{d}t,
    \end{align*}
    we get $DV(A) - \mathscr{A} V(A) \equiv DF(A)$ for any Borel set $A \subset [0, T]$.
    
    {\it Uniqueness and continuous dependence: }
    Let $V(t)$ be an extrapolated solution to Equation (\ref{EQUATION_CAUCHY_PROBLEM_RADON_MEASURE}). Define
    \begin{equation}
        \notag
        \mathcal{E}(t) := \frac{1}{2} \max_{0 \leq s \leq t} \big\|V(s)\big\|_{\mathscr{H}}^{2}.
    \end{equation}
    Integrating $\langle V(s), \cdot\rangle_{\mathscr{H}}$ with respect to the Radon measure $\mu := DV - \mathscr{A} V + DF$, we get
    \begin{align*}
        0 &= \int_{[0, t]} \langle V(s), \mathrm{d}\mu(s)\rangle_{\mathscr{H}} \\
        &= \int_{[0, t]} \langle V(s), \mathrm{d}V(s)\rangle_{\mathscr{H}} - \int_{[0, t]} \langle \mathscr{A} V(s), V(s)\rangle_{\mathscr{H}} \mathrm{d}s
        - \int_{[0, t]} \langle V(s), \mathrm{d}F(s)\rangle_{\mathscr{H}} \\
        &= \frac{1}{2} \big\|V(t)\big\|_{\mathscr{H}}^{2} -  \frac{1}{2} \big\|V(0)\big\|_{\mathscr{H}}^{2} -
        \int_{0}^{t} \langle \mathscr{A} V(t), V(t)\rangle_{\mathscr{H}} \mathrm{d}s - \int_{[0, t]} \langle V(s), \mathrm{d}F(s)\rangle_{\mathscr{H}}
    \end{align*}
    Thus, we arrive at
    \begin{align*}
        \mathcal{E}(t) &\leq \mathcal{E}(0) + 2 \lambda \int_{0}^{t} \mathcal{E}(s) \mathrm{d}s + \big(\mathcal{E}(t)\big)^{1/2} \|DF\|\big([0, T]\big) \\
        &\leq \mathcal{E}(0) + 2 \lambda \int_{0}^{t} \mathcal{E}(s) \mathrm{d}s + \frac{1}{2} \big(\mathcal{E}(t)\big)^{1/2} \|F\|_{\mathrm{BV}([a, b], \mathscr{H})} \\
        &\leq \mathcal{E}(0) + 2 \lambda \int_{0}^{t} \mathcal{E}(s) \mathrm{d}s + \frac{\varepsilon}{2} \mathcal{E}(t)
        + \frac{1}{2\varepsilon} \|F\|_{\mathrm{BV}([a, b], \mathscr{H})}^{2},
    \end{align*}
    where we used Young's inequality. Selecting $\varepsilon$ sufficiently small and applying Gronwall's inequality, we get
    \begin{equation}
        \mathcal{E}(t) \leq \tilde{M} e^{2 \tilde{\omega} t} \Big(\mathcal{E}(0) + \|F\|_{\mathrm{BV}([a, b], \mathscr{H})}^{2}\Big)
    \end{equation}
    for some $\tilde{M}, \tilde{\omega} > 0$. Taking the square root and using the finite-dimensional norm equivalence,
    we obtain the desired estimate (\ref{EQUATION_CAUCHY_PROBLEM_RADON_MEASURE_ESIMATE}).
    The uniqueness now trivially follows due to linearity of Equation (\ref{EQUATION_CAUCHY_PROBLEM_RADON_MEASURE}) with the estimate applied to the difference of two solutions.
\end{proof}

Returning to our delayed Kelvin \& Voigt viscoelastic Equations (\ref{INTRODUCTION_EQUATION})--(\ref{INTRODUCTION_IC_2}), we get:
\begin{corollary}
    \label{COROLLARY_CAUCHY_PROBLEM_RADON_MEASURE_EXISTENCE}

    Let $f \in \mathscr{M}\big([0, T], L^{2}(\Omega)\big)$ be a Radon measure with $\big\|f\big(\{0\}\big)\big\|_{L^{2}(\Omega)} = 0$,
    $y^{0} \in H^{1}_{\Gamma_{0}}(\Omega)$, $y^{1} \in L^{2}(\Omega)$, $\varphi \in H^{1}\big({-\tau}, 0; L^{2}(\Omega)\big)$.
    Then, Equations (\ref{INTRODUCTION_EQUATION})--(\ref{INTRODUCTION_IC_2}) possess a unique extrapolated BV solution $y$ with
    $y \in \operatorname{BV}\big([0, T], H^{1}_{\Gamma_{0}}(\Omega)\big)$ and $\partial_{t} y \in \operatorname{BV}\big([0, T], L^{2}(\Omega)\big)$.
\end{corollary}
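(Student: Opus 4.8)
The plan is to realize this corollary as a direct application of the abstract Theorem \ref{THEOREM_CAUCHY_PROBLEM_RADON_MEASURE_EXISTENCE} to the concrete operator $\mathscr{A}$ from Equations (\ref{WELL_POS_OPERATOR_DEFINITION})--(\ref{WELL_POS_OPERATOR_DOMAIN}). The abstract theorem demands three structural facts about $\mathscr{A}$, each of which is already available. That $\mathscr{A}$ is densely defined and that $\mathscr{A} - \lambda \operatorname{Id}$ is dissipative and surjective (hence maximally dissipative) for a suitable $\lambda > 0$ is exactly the content of Lemma \ref{WELL_POS_MAIN_LEMMA}; in particular, inequality (\ref{EQUATION_OPERATOR_A_MINUS_LAMBDA_DISSIPATIVE}) supplies the dissipativity and the Lax \& Milgram argument supplies the range condition. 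Closedness of $\mathscr{A}$ follows from the fact, also established there, that it generates a $C_0$-semigroup. Thus all hypotheses on the operator are met, and the extrapolation to $\big[D(\mathscr{A}^{\ast})\big]'$ underlying the abstract theorem is legitimate.

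The second step is to encode the scalar-valued forcing as an $\mathscr{H}$-valued datum. Since the inhomogeneity $f$ enters only the momentum equation (\ref{INTRODUCTION_EQUATION}), the natural lift into the extended phase space is the $\mathscr{H}$-valued Radon measure $DF := (0, f, 0, 0)^{T}$, where we use that the second component of $\mathscr{H}$ is precisely $L^{2}(\Omega)$. By the fundamental theorem of calculus for BV functions (\ref{EQUATION_RADON_FUNDAMENTAL_THEOREM_OF_CALCULUS}), this measure corresponds to a c\'{a}dl\'{a}g function $F \in \mathrm{BV}\big([0, T], \mathscr{H}\big)$, and the hypothesis $\big\|f(\{0\})\big\|_{L^{2}(\Omega)} = 0$ transfers verbatim to $\big\|DF(\{0\})\big\|_{\mathscr{H}} = 0$, as required. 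Simultaneously, I would assemble the initial datum $V^{0} = (y^{0}, y^{1}, \varphi^{0}, \varphi^{1})^{T}$ with $\varphi^{0}, \varphi^{1}$ the reparametrizations of $\varphi, \partial_{t} \varphi$ onto $(0, 1)$ introduced after Equation (\ref{WELL_POS_ABSTRACT_FORMULATION}), and verify $V^{0} \in \mathscr{H}$ from the regularity assumed on $y^{0}, y^{1}$ and $\varphi$.

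With these verifications in hand, Theorem \ref{THEOREM_CAUCHY_PROBLEM_RADON_MEASURE_EXISTENCE} yields a unique extrapolated BV solution $V = (v_{1}, v_{2}, v_{3}, v_{4})^{T} \in \mathrm{BV}\big([0, T], \mathscr{H}\big)$ to the lifted problem (\ref{EQUATION_CAUCHY_PROBLEM_RADON_MEASURE}). The final step is to translate this back to the original variables: by construction $v_{1} = y$ and $v_{2} = \partial_{t} y$, so membership of $V$ in $\mathrm{BV}\big([0, T], \mathscr{H}\big)$ forces $y \in \mathrm{BV}\big([0, T], H^{1}_{\Gamma_{0}}(\Omega)\big)$ from the first component and $\partial_{t} y \in \mathrm{BV}\big([0, T], L^{2}(\Omega)\big)$ from the second, which is the asserted regularity; uniqueness descends directly from the abstract uniqueness statement.

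I expect the main obstacle to be the bookkeeping at the interface between the scalar PDE and the abstract extrapolated framework — specifically, confirming that the lifted solution's first two components really reproduce $y$ and $\partial_{t} y$ as an extrapolated BV solution of (\ref{INTRODUCTION_EQUATION})--(\ref{INTRODUCTION_IC_2}), and that the boundary and history constraints encoded in $D(\mathscr{A})$ (the Green's formula (\ref{EQUATION_GREEN_FORMULA}) together with the couplings $v_{3}|_{s=0} = v_{1}$ and $v_{4}|_{s=0} = v_{2}$) remain consistent when the equation holds only in the $\big[D(\mathscr{A}^{\ast})\big]'$-valued measure sense. Verifying $V^{0} \in \mathscr{H}$ from the stated hypotheses on $\varphi$ also deserves care, since the history slots of $\mathscr{H}$ are $H^{1}_{\Gamma_{0}}(\Omega)$-valued.
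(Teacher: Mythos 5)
Your proposal is correct and coincides with the paper's (implicit) argument: the corollary is stated without proof precisely because it is the direct specialization of Theorem \ref{THEOREM_CAUCHY_PROBLEM_RADON_MEASURE_EXISTENCE} to the operator $\mathscr{A}$ of Lemma \ref{WELL_POS_MAIN_LEMMA}, with the forcing lifted as $DF = (0, f, 0, 0)^{T}$ and the regularity read off componentwise from $V \in \mathrm{BV}\big([0,T], \mathscr{H}\big)$. The one caveat you rightly flag is real: as stated, $\varphi \in H^{1}\big({-\tau}, 0; L^{2}(\Omega)\big)$ does not place $(\varphi^{0}, \varphi^{1})$ in the $H^{1}_{\Gamma_{0}}(\Omega)$-valued history slots of $\mathscr{H}$, so the hypothesis should read $\varphi \in H^{1}\big({-\tau}, 0; H^{1}_{\Gamma_{0}}(\Omega)\big)$ as in Corollary \ref{COROLLARY_EXISTENCE_AND_UNIQUENESS}; this is an inconsistency in the statement rather than a gap in your argument.
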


\section{Long-Time Behavior} \label{SECTION_LONG_TIME_BEHAVIOR}
Consider the `natural' energy 
\begin{equation}
    \label{STABILITY_ENERGY_DEFINITION}
    \mathcal{E}(t) = \frac{1}{2} \|\partial_t y\|^2 + \frac{c_1}{2}\|\nabla y\|^2 
    + \frac{\tau d_1}{2}\int_0^1 \|\nabla z(s,\cdot)\|^2\mathrm{d}s + \frac{\tau d_2}{2}\int_0^1 \|\partial_t \nabla z(s,\cdot)\|^2\mathrm{d}s.
\end{equation}
Throughout this section, let $c_p > 0$ denote the Poincar\'{e} \& Friechrichs' constant, i.e., $\|u\|^2 \le c_p \| \nabla u \|^2$ for every $u\in H^1_{\Gamma_0}(\Omega)$.

\begin{assumption}[Coefficients $c_1,c_2,d_1,d_2$]
    \label{ASSUMPTION_COEFFICIENTS}

    Suppose the coefficients $c_1,c_2,d_1,d_2$ satisfy:
    \begin{enumerate}
        \item $c_1 > 6 c_2 > 0$,

        \item $d_1^2 \ge \max\left\{ \frac{9c_1^2d_2^2}{c_1^2-9c_2^2}, \frac{18 c_1 c_2^2 c_p}{c_1^2-36c_2^2} \right\}$, $d_{2} > 0$.
    \end{enumerate}
\end{assumption}

\begin{remark} \label{STABILITY_REMARK_1}
    \begin{enumerate}
    	\item It follows from Assumption \ref{ASSUMPTION_COEFFICIENTS}.2 that $d_1 > 3d_2$.
    	
    	\item In contrast to \cite{AmNiPi1995}, our Assumption \ref{ASSUMPTION_COEFFICIENTS} does not impose a smallness condition on $\tau$.
    \end{enumerate}
\end{remark}

\subsection{Exponential Stability} \label{SUBSECTION_STABILITY}

\begin{theorem} \label{THEOREM_LYAPUNOV}
    Let $V^0 \in \mathscr{H}$. Under Assumption \ref{ASSUMPTION_COEFFICIENTS}, there exist constants $\tilde{\alpha}, C > 0$ such that
    \begin{equation*}
        \mathcal{E}(t) \leq Ce^{-2 \tilde{\alpha} t} \mathcal{E}(0).
    \end{equation*}
\end{theorem}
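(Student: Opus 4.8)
The plan is to construct a Lyapunov functional $L(t)$ that is equivalent to the natural energy $\mathcal{E}(t)$, i.e. $\beta_1 \mathcal{E}(t) \le L(t) \le \beta_2 \mathcal{E}(t)$ for constants $\beta_1,\beta_2>0$, and that obeys a differential inequality $\dot{L}(t) \le -2\tilde{\alpha}\, L(t)$. Gronwall's lemma then gives $L(t) \le e^{-2\tilde{\alpha} t} L(0)$, and the equivalence turns this into $\mathcal{E}(t) \le \tfrac{\beta_2}{\beta_1} e^{-2\tilde{\alpha} t}\mathcal{E}(0)$, i.e. the claim with $C = \beta_2/\beta_1$. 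Since every manipulation below involves differentiating along trajectories, I would first carry out the computation for classical solutions issuing from $V^0 \in D(\mathscr{A})$ (whose existence and regularity are provided by Corollary \ref{COROLLARY_EXISTENCE_AND_UNIQUENESS}) and only afterwards extend the final inequality to arbitrary $V^0 \in \mathscr{H}$ by density of $D(\mathscr{A})$ in $\mathscr{H}$ together with continuous dependence on the data.

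First I would differentiate $\mathcal{E}$. Testing Equation (\ref{INTRODUCTION_EQUATION}) with $\partial_t y$, applying Green's formula (\ref{EQUATION_GREEN_FORMULA}) with the boundary conditions, and using the transport identity $\partial_t \|\nabla z(s,\cdot)\|^2 = -\tfrac{1}{\tau}\partial_s\|\nabla z(s,\cdot)\|^2$ (and its analogue for $\partial_t z$) followed by integration by parts in $s$ and the trace relations $z(0,\cdot)=y$, $z(1,\cdot)=y(\cdot-\tau)$, one is led to
\[
\dot{\mathcal{E}}(t) = -\Big(d_1 - \tfrac{d_2}{2}\Big)\|\nabla \partial_t y\|^2 + \tfrac{d_1}{2}\|\nabla y\|^2 - \tfrac{d_1}{2}\|\nabla z(1,\cdot)\|^2 - \tfrac{d_2}{2}\|\partial_t \nabla z(1,\cdot)\|^2 - c_2\langle \nabla z(1,\cdot), \nabla \partial_t y\rangle - d_2\langle \partial_t \nabla z(1,\cdot), \nabla \partial_t y\rangle .
\]
The crucial observation is that $\mathcal{E}$ is \emph{not} monotone: the boundary contribution at $s=0$ leaves the indefinite term $+\tfrac{d_1}{2}\|\nabla y\|^2$, and nothing in $\dot{\mathcal{E}}$ controls either $\|\nabla y\|^2$ or the two history integrals appearing in $\mathcal{E}$. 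This forces the introduction of two correctors.

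To recover coercivity in $\nabla y$ I would use $\Phi(t) := \langle \partial_t y, y\rangle$, which satisfies $\dot{\Phi} = \|\partial_t y\|^2 - c_1\|\nabla y\|^2 - c_2\langle \nabla z(1,\cdot),\nabla y\rangle - d_1\langle \nabla \partial_t y,\nabla y\rangle - d_2\langle \partial_t \nabla z(1,\cdot),\nabla y\rangle$ and thus supplies the missing $-c_1\|\nabla y\|^2$; note $|\Phi| \le \tfrac12\|\partial_t y\|^2 + \tfrac{c_p}{2}\|\nabla y\|^2$, so $\Phi$ is dominated by $\mathcal{E}$. To recover dissipation of the history, I would take the weighted functional $\Psi(t) := \tau\int_0^1 e^{-2\tau s}\big(d_1\|\nabla z(s,\cdot)\|^2 + d_2\|\partial_t \nabla z(s,\cdot)\|^2\big)\,\mathrm{d}s$, whose derivative, after the same transport manipulation, reads $\dot{\Psi} = d_1\|\nabla y\|^2 + d_2\|\nabla \partial_t y\|^2 - e^{-2\tau}\big(d_1\|\nabla z(1,\cdot)\|^2 + d_2\|\partial_t \nabla z(1,\cdot)\|^2\big) - 2\tau\int_0^1 e^{-2\tau s}\big(d_1\|\nabla z(s,\cdot)\|^2 + d_2\|\partial_t \nabla z(s,\cdot)\|^2\big)\,\mathrm{d}s$; its last term dominates a positive multiple of both history integrals. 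Setting $L := \mathcal{E} + \varepsilon_1 \Phi + \varepsilon_2 \Psi$ with small $\varepsilon_1,\varepsilon_2>0$ keeps $L$ equivalent to $\mathcal{E}$, and I would then estimate $\dot{L}$ by bounding the four cross terms $\langle \nabla z(1,\cdot),\nabla \partial_t y\rangle$, $\langle \partial_t \nabla z(1,\cdot),\nabla \partial_t y\rangle$, $\langle \nabla z(1,\cdot),\nabla y\rangle$, $\langle \partial_t \nabla z(1,\cdot),\nabla y\rangle$ by Young's inequality with tuned weights and reabsorbing $\varepsilon_1\|\partial_t y\|^2$ into the damping via $\|\partial_t y\|^2 \le c_p\|\nabla \partial_t y\|^2$.

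The heart of the matter, and the step I expect to be the main obstacle, is the \emph{simultaneous} feasibility of these parameter choices. The indefinite term $\big(\tfrac{d_1}{2} + \varepsilon_2 d_1\big)\|\nabla y\|^2$ (coming from $\mathcal{E}$ and from the $s=0$ boundary part of $\dot{\Psi}$) must be beaten by $-\varepsilon_1 c_1\|\nabla y\|^2$, which forces $\varepsilon_1$ from below, whereas the reabsorption of $\varepsilon_1\|\partial_t y\|^2$, of $\varepsilon_2 d_2\|\nabla \partial_t y\|^2$, and of the cross terms into $-\big(d_1 - \tfrac{d_2}{2}\big)\|\nabla \partial_t y\|^2$ forces $\varepsilon_1$ and the Young weights from above; likewise the $s=1$ terms $\|\nabla z(1,\cdot)\|^2$ and $\|\partial_t \nabla z(1,\cdot)\|^2$ generated by $\Phi$ and by the cross terms must be dominated by the negative boundary terms already present in $\dot{\mathcal{E}}$ and $\dot{\Psi}$. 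I expect these competing constraints to close precisely under Assumption \ref{ASSUMPTION_COEFFICIENTS}: the condition $c_1 > 6 c_2$ controls the $c_2$-cross terms, while the lower bounds on $d_1^2$ (which, tellingly, involve both $d_2$ and the Poincar\'{e} constant $c_p$) are exactly the feasibility thresholds guaranteeing a nonempty admissible window for $\varepsilon_1$, $\varepsilon_2$ and the Young parameters. Verifying these algebraic inequalities is the laborious but routine part; once $\dot{L} \le -2\tilde{\alpha} L$ is secured for smooth data, Gronwall's inequality, the equivalence $L \sim \mathcal{E}$, and the density argument conclude the proof.
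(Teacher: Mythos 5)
Your overall strategy --- a perturbed-energy Lyapunov functional $L$ equivalent to $\mathcal{E}$ with $\dot L \le -2\tilde{\alpha} L$, then Gronwall and a density argument --- is exactly the paper's, and your energy identity for $\dot{\mathcal{E}}$ and the derivatives of the correctors $\Phi$ and $\Psi$ are computed correctly. (Your exponentially weighted $\Psi$, which produces an explicit negative bulk term dominating the history integrals of $\mathcal{E}$, is in fact a cleaner route to $\dot L \le -2\tilde{\alpha} L$ than the paper's unweighted functionals $\mathcal{F}_4, \mathcal{F}_5$.)

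The gap is in the feasibility of your normalization $L = \mathcal{E} + \varepsilon_1 \Phi + \varepsilon_2 \Psi$ with \emph{small} $\varepsilon_1$. The only negative contribution in $\|\nabla y\|^2$ comes from $-\varepsilon_1 c_1 \|\nabla y\|^2$ in $\dot{\Phi}$, so beating the indefinite term $+\tfrac{d_1}{2}\|\nabla y\|^2$ forces $\varepsilon_1 > d_1/(2c_1)$, which is \emph{not} small: Assumption \ref{ASSUMPTION_COEFFICIENTS} pushes $d_1$ to be large relative to $c_1$. On the other hand, $|\varepsilon_1 \langle \partial_t y, y\rangle| \le \tfrac{\varepsilon_1 \hat{\varepsilon}}{2}\|\partial_t y\|^2 + \tfrac{\varepsilon_1 c_p}{2\hat{\varepsilon}}\|\nabla y\|^2$ can be dominated by $\tfrac12 \|\partial_t y\|^2 + \tfrac{c_1}{2}\|\nabla y\|^2$ only if $\varepsilon_1 < \sqrt{c_1/c_p}$, so the equivalence $L \sim \mathcal{E}$ additionally requires $d_1^2 c_p < 4 c_1^3$ --- a condition that Assumption \ref{ASSUMPTION_COEFFICIENTS} does not imply. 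Concretely, $c_1 = c_p = 1$, $c_2 = 1/10$, $d_2 = 1$, $d_1 = 100$ satisfies Assumption \ref{ASSUMPTION_COEFFICIENTS}, yet then $\varepsilon_1 > 50$ is needed for negativity of the $\|\nabla y\|^2$-coefficient while $\varepsilon_1 < 1$ is needed for $L \ge \beta_1 \mathcal{E}$; for $\varepsilon_1 = 50$ one can even make $L < 0$. The source of the trouble is that the $s = 0$ endpoint of the history part of $\mathcal{E}$ contributes $+\tfrac{d_1}{2}\|\nabla y\|^2$ with a coefficient rigidly tied to $d_1$. The paper avoids this by giving the instantaneous energy its own free weight $N$ and replacing the $d_1$-, $d_2$-weighted history integrals by ones with free weights $\xi_1 = M c_1/3$ and $\xi_2 = N d_1/3$: the $s = 0$ contribution then becomes $\tfrac{M c_1}{6}\|\nabla y\|^2$, absorbed by the $-M c_1 \|\nabla y\|^2$ from the cross term with no lower bound on $M$ itself, and only the ratio $N/M$ must be tuned, which is where Assumption \ref{ASSUMPTION_COEFFICIENTS} enters. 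If you decouple the weights of your history functional from $d_1, d_2$ and weight the instantaneous energy separately, your argument goes through.
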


\begin{proof}
    Without loss of generality, recalling $D(\mathscr{A})$ is dense in $\mathscr{H}$, let $V^0 \in D(\mathscr{A})$. 
    Let $y$ denote the classical solution to Equations (\ref{WELL_POS_INITIAL_SYSTEM_EQUATION})--(\ref{WELL_POS_INITIAL_SYSTEM_IC_2}) for the initial datum $V^0$.

    Consider the Lyapunov functional
    \begin{align*}
        \mathcal{F}(t) &= \mathcal{F}_1(t) + \mathcal{F}_2(t) + \mathcal{F}_3(t) +\mathcal{F}_4(t) +\mathcal{F}_5(t), \text{ where } \\
        \mathcal{F}_1(t) &= \frac{N}{2}\|\partial_t y\|^2, \quad \mathcal{F}_2(t) = \frac{c_1N}{2}\|\nabla y\|^2, \quad \mathcal{F}_3(t) =  M \langle y, \partial_{t} y\rangle, \\
        \mathcal{F}_4(t) &= \frac{\tau \xi_1}{2}\int_0^1 \|\nabla z(s,\cdot)\|^2\mathrm{d}s, \quad \mathcal{F}_5(t) = \frac{\tau \xi_2}{2}\int_0^1 \|\partial_t \nabla z(s,\cdot)\|^2\mathrm{d}s
    \end{align*}
    and $M,N,\xi_1,\xi_2$ are positive constants to be fixed later.
    Computing the derivative of $\mathcal{F}$, using the generalized Young's inequality and Poincar\'{e} \& Friedrichs' inequality, we obtain
    \begin{align*}
        \dot{\mathcal{F}}_1(t) &= \partial_{t} \Big(\frac{N}{2}\|\partial_t y\|^2\Big) = N\langle \partial_{t}y, \partial_{tt} y\rangle\\
        &=N c_1\langle \partial_{t}y,  \triangle y\rangle + N c_2\langle \partial_{t}y,  \triangle z(1,\cdot) \rangle +N d_1\langle \partial_{t}y, \partial_{t} \triangle y\rangle +N d_2\langle \partial_{t}y,  \partial_{t} \triangle z(1,\cdot)\rangle\\
        &=-N c_1\langle \partial_{t} \nabla y,  \nabla y\rangle - N c_2\langle \partial_{t} \nabla y,  \nabla z(1,\cdot) \rangle -N d_1\langle \partial_{t} \nabla y, \partial_{t} \nabla y\rangle -N d_2\langle \partial_{t} \nabla y,  \partial_{t} \nabla z(1,\cdot)\rangle\\
        &\leq -N c_1\langle \partial_{t} \nabla y,  \nabla y\rangle +\frac{N c_2 \varepsilon_1}{2}\| \partial_{t} \nabla y\|^2 + \frac{N c_2}{2\varepsilon_1}\|\nabla z(1,\cdot) \|^2 - N d_1\|\partial_{t} \nabla y\|^2 \\
        &+ \frac{N d_2\varepsilon_2}{2}\| \partial_{t} \nabla y\|^2    + \frac{N d_2}{2\varepsilon_2} \|\partial_{t} \nabla z(1,\cdot)\|^2, \\
        \dot{\mathcal{F}}_2(t) &= \partial_{t} \Big(\frac{c_1N}{2}\|\nabla y\|^2\Big) = N c_1\langle  \nabla y, \partial_{t} \nabla y\rangle, \\
        \dot{\mathcal{F}}_3(t) &= \partial_{t} \Big(M \langle y, \partial_{t} y\rangle \Big) =  M \langle \partial_{t} y, \partial_{t} y\rangle + M \langle y, \partial_{tt} y\rangle\\
        &=M\|\partial_{t}y\|^2 - M c_1\langle \nabla y,  \nabla y\rangle - M c_2\langle \nabla y,  \nabla z(1,\cdot) \rangle - M d_1\langle \nabla y, \partial_{t} \nabla y\rangle -M d_2\langle \nabla y,  \partial_{t} \nabla z(1,\cdot)\rangle \\
        & \leq Mc_p\|\partial_{t}\nabla y\|^2 - M c_1\|\nabla y\|^2 + \frac{M c_2\varepsilon_3}{2}\|\nabla y\|^2 + \frac{M c_2}{2\varepsilon_3} \|\nabla z(1,\cdot)\|^2 \\
        &+ \frac{M d_1\varepsilon_4}{2}\|\nabla y\|^2 + \frac{M d_1}{2\varepsilon_4}\|\partial_t \nabla y\|^2 + \frac{M d_2\varepsilon_5}{2}\|\nabla y\|^2 + \frac{M d_2}{2\varepsilon_5} \|\partial_{t} \nabla z(1,\cdot)\|^2.
    \end{align*}
    Further, we evaluate the derivative of $\mathcal{F}_4$:
    \begin{align*}
        \dot{\mathcal{F}}_4(t) &= \partial_{t} \Big(\frac{\tau \xi_1}{2} \int_0^1 \|\nabla z(s,\cdot)\|^2\mathrm{d}s\Big) = \tau \xi_1 \int_0^1 \langle \nabla z(s,\cdot), \partial_t \nabla z(s,\cdot) \rangle \mathrm{d}s.
    \end{align*}
    On the strength of Equation (\ref{WELL_POS_INITIAL_SYSTEM_Z_1}), the latter integral is equal to
    \begin{align*}
        \dot{\mathcal{F}}_4(t) &= \tau \xi_1 \int_0^1 \big\langle \nabla z(s,\cdot), 
        -\frac{1}{\tau}\partial_s \nabla z(s,\cdot) \big\rangle \mathrm{d}s = -\frac{\xi_1}{2} \int_0^1 \partial_s \big( \|\nabla z(s,\cdot)\|^2 \big) \mathrm{d}s = -\frac{\xi_1}{2} \left.  \|\nabla z(s,\cdot)\|^2  \right|_{s=0}^1 \\
        &= -\frac{\xi_1}{2}\big(\| \nabla z(1,\cdot)\|^2-\| \nabla z(0,\cdot)\|^2\big).
    \end{align*}
    In the same fashion,
    \begin{align*}
        \dot{\mathcal{F}}_5(t) &= \partial_{t} \Big(\frac{\tau}{2}\int_0^1 \rho_2(\tau s)\|\partial_t \nabla z(s,\cdot)\|^2\mathrm{d}s\Big) = -\frac{\xi_2}{2}\big(\|\partial_t \nabla z(1,\cdot)\|^2-\|\partial_t \nabla z(0,\cdot)\|^2\big).
    \end{align*}
    Therefore, we get
    \begin{align}
        \notag
        \mathcal{F}'(t)&\leq \Big(-Mc_1+ \frac{M c_2\varepsilon_3}{2}+ \frac{M d_1\varepsilon_4}{2}+ \frac{M d_2\varepsilon_5}{2}+\frac{\xi_1}{2}\Big)\|\nabla y\|^2\\
        \label{STABILITY_LYAPUNOV_DERIVATIVE_ESTIMATION}
        &+ \Big(\frac{Nc_2\varepsilon_1}{2}-Nd_1+\frac{Nd_2\varepsilon_2}{2}+ Mc_p + \frac{M d_1}{2\varepsilon_4}+\frac{\xi_2}{2}\Big)\|\partial_t \nabla y\|^2 \\
        \notag
        & +\Big(\frac{Nc_2}{2\varepsilon_1}+\frac{Mc_2}{2\varepsilon_3}-\frac{\xi_1}{2}\Big)\|\nabla z(1,\cdot)\|^2 +\Big(\frac{Nd_2}{2\varepsilon_2}+\frac{Md_2}{2\varepsilon_5}-\frac{\xi_2}{2}\Big)\|\partial_t \nabla z(1,\cdot)\|^2.
    \end{align}
    Next, we will choose $N, M, \xi_1, \xi_2$ such that the following conditions hold:
    \begin{align}
        \label{STABILITY_NEGATIVNESS_COEF_1}
        -Mc_1+ \frac{M c_2\varepsilon_3}{2}+ \frac{M d_1\varepsilon_4}{2}+ \frac{M d_2\varepsilon_5}{2}+\frac{\xi_1}{2} &< 0,\\
        \label{STABILITY_NEGATIVNESS_COEF_2}
        \frac{Nc_2\varepsilon_1}{2}-Nd_1+\frac{Nd_2\varepsilon_2}{2}+ Mc_p + \frac{M d_1}{2\varepsilon_4}+\frac{\xi_2}{2} &< 0,\\
        \label{STABILITY_NEGATIVNESS_COEF_3}
        \frac{Nc_2}{2\varepsilon_1}+\frac{Mc_2}{2\varepsilon_3}-\frac{\xi_1}{2} &< 0,\\
        \label{STABILITY_NEGATIVNESS_COEF_4}
        \frac{Nd_2}{2\varepsilon_2}+\frac{Md_2}{2\varepsilon_5}-\frac{\xi_2}{2} &< 0.
    \end{align}
    Letting
    \begin{align*}
        \varepsilon_1 = \frac{d_1}{3c_2}, \quad 
        \varepsilon_2 = \frac{d_1}{3d_2}, \quad 
        \varepsilon_3 = \frac{c_1}{3c_2}, \quad 
        \varepsilon_4 = \frac{c_1}{3d_1}, \quad  
        \varepsilon_5 = \frac{c_1}{3d_2}, \quad 
        \xi_1 = \frac{Mc_1}{3}, \quad
        \xi_2 = \frac{Nd_1}{3}.
    \end{align*}
    Equations (\ref{STABILITY_NEGATIVNESS_COEF_1}) -- (\ref{STABILITY_NEGATIVNESS_COEF_4}) can be expressed as
    \begin{align}
        \label{STABILITY_NEGATIVNESS_COEF_5}
        Mc_1/3 &< 0,\\
        \label{STABILITY_NEGATIVNESS_COEF_6}
        Mc_p + 3M d_1^2/(2 c_1) &< Nd_1/2,\\
        \label{STABILITY_NEGATIVNESS_COEF_7}
        3Nc_2^2/(2d_1) + 3Mc_2^2/(2c_1) &< Mc_1/6,\\
        \label{STABILITY_NEGATIVNESS_COEF_8}
        3Nd_2^2/(2 d_1) + 3Md_2^2/(2 c_1) &< Nd_1/6.
    \end{align}
    Observing the condition in Equation (\ref{STABILITY_NEGATIVNESS_COEF_5}) is satisfied. 
    Further, using Remark \ref{STABILITY_REMARK_1}, we rewrite Equations (\ref{STABILITY_NEGATIVNESS_COEF_6})--(\ref{STABILITY_NEGATIVNESS_COEF_8}) as follows
    \begin{align}
        \label{STABILITY_NEGATIVNESS_COEF_9}
        \frac{N}{M} & < \frac{d_1(c_1^2-9c_2^2)}{9c_1c_2^2},
        \\
        \label{STABILITY_NEGATIVNESS_COEF_10}
        \frac{N}{M} & > \frac{9d_2^2d_1}{c_1(d_1^2-9d_2^2)},
        \\
        \label{STABILITY_NEGATIVNESS_COEF_11}
        \frac{N}{M} & > \frac{2c_pc_1 +3d_1^2}{c_1d_1}.
    \end{align}
    On the other hand, Assumption \ref{ASSUMPTION_COEFFICIENTS} implies
    \begin{align*}
        (c_1^2-9c_2^2)(d_1^2-9d_2^2) > 81 d_2^2 c_2^2, \quad
        (c_1^2-9c_2^2) d_1^2 &> 9 c_2^2 (2 c_p c_1 + 3d_1^2),
    \end{align*}
    and, therefore,
    \begin{align*}
        \frac{(c_1^2-9c_2^2)d_1}{9 c_1 c_2^2} > \frac{9d_1d_2^2}{c_1(d_1^2-9d_2^2)}, \quad
        \frac{(c_1^2-9c_2^2)d_1}{9 c_1 c_2^2} > \frac{2 c_p c_1 + 3d_1^2}{c_1d _1}.
    \end{align*}
    The latter inequalities mean that there exist positive numbers $N, M$ such that the conditions in Equation (\ref{STABILITY_NEGATIVNESS_COEF_9})--(\ref{STABILITY_NEGATIVNESS_COEF_11}) hold.
    Taking into account  Equations (\ref{STABILITY_NEGATIVNESS_COEF_1})--(\ref{STABILITY_NEGATIVNESS_COEF_4}), 
    Equation (\ref{STABILITY_LYAPUNOV_DERIVATIVE_ESTIMATION}) furnishes the existence of $\alpha > 0$ such that
    \begin{equation}
        \label{STABILITY_LYAPUNOV_DERIVATIVE_ESTIMATION_2}
        \dot{\mathcal{F}}(t) \leq -2 \alpha \mathcal{E}(t) \text{ for } t \geq 0.
    \end{equation}

    Next, we prove $\mathcal{F}(\cdot)$ is equivalent with the energy $\mathcal{E}(\cdot)$, i.e., there exist constants $k_1, k_2 > 0$ such that
    \begin{equation}
        \label{STABILITY_LYAPUNOV_ESTIMATIONS}
        k_1 \mathcal{E}(t) \leq \mathcal{F}(t) \leq k_2 \mathcal{E}(t) \text{ for } t \geq 0.
    \end{equation}
    Letting $\hat{\varepsilon} = \sqrt{\frac{c_p}{c_1}}$ and applying the generalized Young's inequality and Poincar\'{e} \& Friedrichs' inequality, we get
    \begin{equation*}
        \left|\langle y, \partial_{t} y\rangle \right| \le 
        \frac{\hat{\varepsilon}}{2} \| \partial_t y \|^2 + \frac{1}{2\hat{\varepsilon}} \| y \|^2 \le  \frac{\hat{\varepsilon}}{2} \| \partial_t y \|^2 + \frac{c_p}{2\hat{\varepsilon}} \| \nabla y \|^2.
    \end{equation*}
    Therefore,
    \begin{align}
        \notag
        \mathcal{F}(t) &\leq \Big(\frac{N}{2}+\dfrac{M\hat{\varepsilon}}{2} \Big)\|\partial_t y\|^2+\Big(\frac{c_1N}{2}+\dfrac{Mc_p}{2\hat{\varepsilon}} \Big)\|\nabla y\|^2 \\ 
        \label{STABILITY_LYAPUNOV_ESTIMATION_1}
        &+\frac{\tau \xi_1}{2}\int_0^1 \|\nabla z(s,\cdot)\|^2\mathrm{d}s + \frac{\tau \xi_2}{2}\int_0^1 \|\partial_t \nabla z(s,\cdot)\|^2\mathrm{d}s, \\
        \notag
        \mathcal{F}(t) &\geq \Big(\frac{N}{2}-\dfrac{M\hat{\varepsilon}}{2} \Big)\|\partial_t y\|^2+\Big(\frac{c_1N}{2}-\dfrac{Mc_p}{2\hat{\varepsilon}} \Big)\|\nabla y\|^2 \\
        \label{STABILITY_LYAPUNOV_ESTIMATION_2}
        &+\frac{\tau \xi_1}{2}\int_0^1 \|\nabla z(s,\cdot)\|^2\mathrm{d}s + \frac{\tau \xi_2 }{2}\int_0^1 \|\partial_t \nabla z(s,\cdot)\|^2\mathrm{d}s.
    \end{align}
    On the strength of Equation (\ref{STABILITY_NEGATIVNESS_COEF_11}), we obtain
    \begin{equation*}
        \frac{N}{M} > \frac{2c_pc_1 +3d_1^2}{c_1d_1} = \frac{2c_p}{d_1} + \frac{3d_1}{c_1} \geq 2\sqrt{\frac{6c_p}{c_1}} > \sqrt{\frac{c_p}{c_1}} = \hat{\varepsilon}.
    \end{equation*}
    Thus, the condition
    \begin{align}
        \label{EQUATION_CONDITION_LYAPUNOV_EQUIVALENCE}
        \frac{N}{2} > \frac{M\hat{\varepsilon}}{2} \quad \text{ and } \quad \frac{c_1N}{2} > \frac{Mc_p}{2\hat{\varepsilon}}
    \end{align}
    renders all coefficients in Equation (\ref{STABILITY_LYAPUNOV_ESTIMATION_1})--(\ref{STABILITY_LYAPUNOV_ESTIMATION_2}) strictly positive. Therefore, letting
    \begin{align*}
        k_1 = \frac{\min \left\{N-M\hat{\varepsilon}, c_1N-Mc_p\hat{\varepsilon}^{-1}, \xi_1,\xi_2\right\}}{\max \left\{1, c_1,d_1,d_2\right\}}, \quad
        k_2 = \frac{\max \left\{N+M\hat{\varepsilon}, c_1N+Mc_p\hat{\varepsilon}^{-1}, \xi_1, \xi_2\right\}}{\min \left\{1, c_1,d_1,d_2\right\}},
    \end{align*}
    the equivalence in Equation (\ref{STABILITY_LYAPUNOV_ESTIMATIONS}) holds.

    Combining Equations (\ref{STABILITY_LYAPUNOV_DERIVATIVE_ESTIMATION_2}) and (\ref{STABILITY_LYAPUNOV_ESTIMATIONS}), we arrive at
    $\partial_{t} \mathcal{F}(t) \leq -\frac{\alpha}{k_2} \mathcal{F}(t)$.
    Gronwall's inequality now yields $\mathcal{F}(t) \leq e^{-\frac{\alpha}{k_2}t} \mathcal{F}(0)$.
    Finally, from Equation (\ref{STABILITY_LYAPUNOV_ESTIMATIONS}), we obtain
    \begin{equation*}
        \mathcal{E}(t) \leq C e^{-\frac{2 \alpha}{k_2}t} \mathcal{E}(0) \text{ for } t \geq 0,
    \end{equation*}
    for $C = k_2/k_1$. This finishes the proof.
\end{proof}

\subsection{Exploring the Stability Region in the Parameter Space} 

In Section \ref{SUBSECTION_STABILITY}, we proved that the exponential stability region of Equations (\ref{INTRODUCTION_EQUATION})--(\ref{INTRODUCTION_IC_2}) is non-empty
by showing the natural energy $\mathcal{E}(t)$ from Equation (\ref{STABILITY_ENERGY_DEFINITION}) decays exponentially as $n \to \infty$ 
provided the coefficients $c_1,c_2,d_1,d_2$ satisfy the relations from Assumption \ref{ASSUMPTION_COEFFICIENTS}.
In this Section, we aim to more closely explore the exponential stability region of Equations (\ref{INTRODUCTION_EQUATION})--(\ref{INTRODUCTION_IC_2}) in the parameter space.
A parameter space for (\ref{INTRODUCTION_EQUATION})--(\ref{INTRODUCTION_IC_2}) consists of the tuples
$(\Omega, \Gamma_{0}, \Gamma_{1}, \tau, c_{1}, c_{2}, d_{1}, d_{2})$.
Typically, obtaining necessary and sufficient conditions on the parameters to precisely describe the exponential stability region in the parameter space
reduces to a rigorous analysis of the spectrum of the operator $\mathscr{A}$ on the extended phase space $\mathscr{H}$ (cf. \cite{AmNiPi1995, XuYuLi2006}).
Since the delay differential equation (\ref{INTRODUCTION_EQUATION})--(\ref{INTRODUCTION_IC_2}) is diagonalizable along the eigenbasis of
the negative Laplacian with mixed Dirichlet--Neumann conditions,
the (strong) stability region can also be established by studying the characteristic quasi-polynomial 
of respective `ordinary' delay differential equations for each of the modes (see, e.g., \cite[Chapter 5]{HaVeLu1993}).
Unfortunately, in our situation, both classic approaches are very problematic from the algebraic point of view.

To circumvent this difficulty, instead of uniquely characterizing the parameter space,
while fixing $\Omega, \Gamma_{0}, \Gamma_{1}$ and $\tau$,
we will provide a subset of $c_{1}, c_{2}, d_{1}, d_{2}$, for which the functional in Theorem \ref{THEOREM_LYAPUNOV} is a uniform Lyapunov functional.
Of course, the actual stability region may turn out to be larger than the subset we provide below and remains a task for future investigations.

Recall Equations (\ref{STABILITY_NEGATIVNESS_COEF_1})--(\ref{STABILITY_NEGATIVNESS_COEF_4}) and (\ref{EQUATION_CONDITION_LYAPUNOV_EQUIVALENCE}).
Since all $\varepsilon$'s are supposed to be sufficiently small, letting $\varepsilon_{1} = \varepsilon_{2} = \dots = \min\{\varepsilon_{1}, \varepsilon_{2}, \dots\} =: \varepsilon$,
Equations (\ref{STABILITY_NEGATIVNESS_COEF_1})--(\ref{STABILITY_NEGATIVNESS_COEF_4}) reduce to
\begin{align*}
    -2Mc_1+ M c_2\varepsilon+ M d_1\varepsilon+ M d_2\varepsilon + \xi_1 & < 0, \\
    Nc_2\varepsilon-2Nd_1+Nd_2\varepsilon+ 2Mc_p + \frac{Md_1}{\varepsilon}+\xi_2  & < 0, \\
    \frac{Nc_2}{\varepsilon}+\frac{Mc_2}{\varepsilon}-\xi_1 & < 0, \\
    \frac{Nd_2}{\varepsilon}+\frac{Md_2}{\varepsilon}- \xi_2 & < 0.
\end{align*}
With some elementary linear algebra, the existence of $\xi_1, \xi_2$ is equivalent with
\begin{align*}
    M(2c_1 - c_2\varepsilon - d_1\varepsilon - d_2\varepsilon) & > \frac{Nc_2}{\varepsilon}+\frac{Mc_2}{\varepsilon}, \\
    N(-c_2\varepsilon+2d_1-d_2\varepsilon) - M\Big(2c_p + \frac{d_1}{\varepsilon}\Big) & > \frac{Nd_2}{\varepsilon}+\frac{Md_2}{\varepsilon}.
\end{align*}
Taking into account positivity of $M$, after simple transformations, we get
\begin{align}
    \label{EQUIVALENT_CONDITION_INEQUALITY1}
    \frac{N}{M} \frac{c_2}{\varepsilon} &< 2c_1 - c_2\varepsilon - d_1\varepsilon - d_2\varepsilon - \frac{c_2}{\varepsilon}, \\
    \label{EQUIVALENT_CONDITION_INEQUALITY2}
    \frac{N}{M}\Big(c_2\varepsilon-2d_1+d_2\varepsilon + \frac{d_2}{\varepsilon}\Big) &< -2c_p - \frac{d_1}{\varepsilon}- \frac{d_2}{\varepsilon}.
\end{align}
Equations (\ref{EQUIVALENT_CONDITION_INEQUALITY1})--(\ref{EQUIVALENT_CONDITION_INEQUALITY2}) further yield
\begin{align}
    \label{EQUIVALENT_CONDITION_EQ1}
    c_2\varepsilon-2d_1+d_2\varepsilon + \dfrac{d_2}{\varepsilon} &< 0, \\
    \label{EQUIVALENT_CONDITION_EQ2}
    2c_1 - c_2\varepsilon - d_1\varepsilon - d_2\varepsilon - \frac{c_2}{\varepsilon} &> 0.
\end{align}
Solving for $\varepsilon$, we get
\begin{align}
    \label{EQUIVALENT_CONDITION_FINAL_CONDITIONS1}
    d_1^2 &\geq d_2(c_2+d_2), \quad c_1^2 \geq c_2(c_2+d_1+d_2) \quad \text{ and } \\
    \label{EQUIVALENT_CONDITION_EQ8}
    \varepsilon &\in \Big(\dfrac{d_1-\sqrt{d_1^2-d_2(c_2+d_2)}}{c_2+d_2},\dfrac{d_1+\sqrt{d_1^2-d_2(c_2+d_2)}}{c_2+d_2}\Big), \\
    \label{EQUIVALENT_CONDITION_EQ9}
    \varepsilon &\in \Big(\dfrac{c_1-\sqrt{c_1^2-c_2(c_2+d_1+d_2)}}{c_2+d_1+d_2},\dfrac{c_1+\sqrt{c_1^2-c_2(c_2+d_1+d_2)}}{c_2+d_1+d_2}\Big).
\end{align}
Let us rewrite the last two inequalities in the form
\begin{align}
    \label{EQUIVALENT_CONDITION_EQ3}
    \varepsilon &\in (\underline{\varepsilon},\overline{\varepsilon}), \text{ where } \\
    \notag
    \underline{\varepsilon} & = \max \Big\{\dfrac{d_1-\sqrt{d_1^2-d_2(c_2+d_2)}}{c_2+d_2},\dfrac{c_1-\sqrt{c_1^2-c_2(c_2+d_1+d_2)}}{c_2+d_1+d_2}\Big\}, \\
    \notag
    \overline{\varepsilon} & = \min\Big\{\dfrac{d_1+\sqrt{d_1^2-d_2(c_2+d_2)}}{c_2+d_2},\dfrac{c_1+\sqrt{c_1^2-c_2(c_2+d_1+d_2)}}{c_2+d_1+d_2}\Big\}.
\end{align}

Suppose now $\varepsilon$ satisfies Equation (\ref{EQUIVALENT_CONDITION_EQ3}) and, therefore, Equations (\ref{EQUIVALENT_CONDITION_EQ1})--(\ref{EQUIVALENT_CONDITION_EQ2}).
Then, Equations (\ref{EQUIVALENT_CONDITION_INEQUALITY1})--(\ref{EQUIVALENT_CONDITION_INEQUALITY2}) are equivalent with
\begin{equation}
    \label{EQUIVALENT_CONDITION_EQ4}
    \Big(-2c_p - \frac{d_1}{\varepsilon}- \frac{d_2}{\varepsilon}\Big) \Big(c_2\varepsilon-2d_1+d_2\varepsilon + \frac{d_2}{\varepsilon}\Big)^{-1} < \frac{N}{M} 
    < \Big(2c_1 - c_2\varepsilon - d_1\varepsilon - d_2\varepsilon - \frac{c_2}{\varepsilon}\Big) \Big(\frac{c_2}{\varepsilon}\Big)^{-1}.
\end{equation}
Existence of $\frac{M}{N}$ in the Equation (\ref{EQUIVALENT_CONDITION_EQ4}) is equivalent with
\begin{equation}
    \label{EQUIVALENT_CONDITION_EQ5}
    \Big(-2c_p - \frac{d_1}{\varepsilon}- \frac{d_2}{\varepsilon}\Big) \Big(c_2\varepsilon-2d_1+d_2\varepsilon + \frac{d_2}{\varepsilon}\Big)^{-1} < 
    \Big(2c_1 - c_2\varepsilon - d_1\varepsilon - d_2\varepsilon - \frac{c_2}{\varepsilon}\Big) \Big(\frac{c_2}{\varepsilon}\Big)^{-1}.
\end{equation}
Recalling Equation (\ref{EQUATION_CONDITION_LYAPUNOV_EQUIVALENCE}), we get
\begin{equation}
    \label{EQUIVALENT_CONDITION_EQ6}
    \frac{N}{M} > \sqrt{\frac{c_p}{c_1}}.
\end{equation}
Then, on the strength of Equations (\ref{EQUIVALENT_CONDITION_EQ4}) and (\ref{EQUIVALENT_CONDITION_EQ6}), we have
\begin{equation}
\label{EQUIVALENT_CONDITION_EQ7}
\frac{\varepsilon}{c_2}\left(2c_1 - c_2\varepsilon - d_1\varepsilon - d_2\varepsilon - \frac{c_2}{\varepsilon}\right) > \sqrt{\frac{c_p}{c_1}}.
\end{equation}
Therefore, the existence of $\varepsilon$ is equivalent with fulfilment of inequalities (\ref{EQUIVALENT_CONDITION_EQ3}), (\ref{EQUIVALENT_CONDITION_EQ5}) and (\ref{EQUIVALENT_CONDITION_EQ7}).

\begin{figure}
    \centering
    \includegraphics[scale = 0.5]{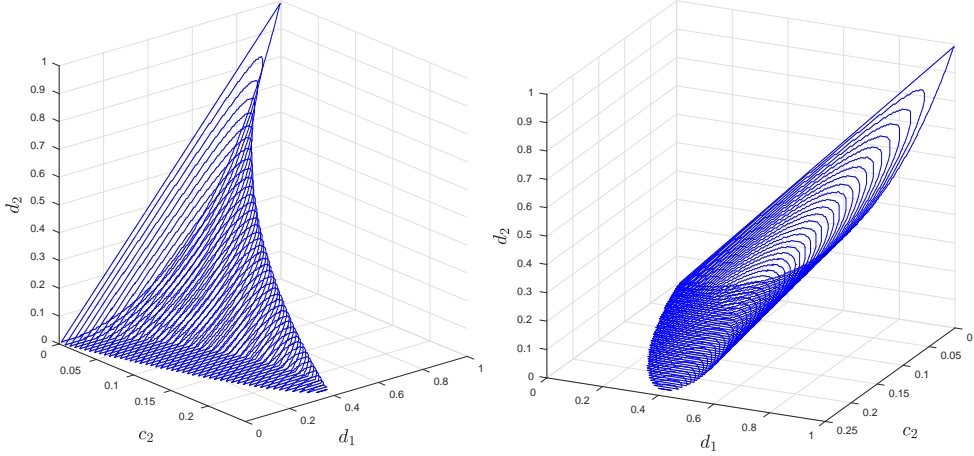}
    \vspace{-0.2in}
    
    \caption{Subset of the stability region generated by Equations (\ref{EQUIVALENT_CONDITION_FINAL_CONDITIONS1}), (\ref{EQUIVALENT_CONDITION_FINAL_CONDITIONS4}) and (\ref{EQUIVALENT_CONDITION_FINAL_CONDITIONS5})
    viewed from two angles \label{FIGURE_STABILITY_REGION}}
\end{figure}

Choose $\varepsilon$ as the center of the interval in Equation (\ref{EQUIVALENT_CONDITION_EQ8})
\begin{equation}
\label{EQUIVALENT_EPSILON_DEFINITION2}
\varepsilon = \frac{d_1}{c_2+d_2}.
\end{equation}
Observing that Equation (\ref{EQUIVALENT_CONDITION_EQ8}) is now trivially satisfied, Equation (\ref{EQUIVALENT_CONDITION_EQ3}) is equivalent with (\ref{EQUIVALENT_CONDITION_EQ9}). 
Further, after simple computations, we infer that Equations (\ref{EQUIVALENT_CONDITION_EQ7}) and (\ref{EQUIVALENT_CONDITION_EQ5}) are equivalent with
\begin{align}
         \label{EQUIVALENT_CONDITION_FINAL_CONDITIONS4}
         \frac{2d_1c_p+(d_1+d_2)(c_2+d_2)}{d_1^2-d_2(c_2+d_2)} < \frac{2c_1d_1}{c_2(c_2+d_2)} - 1 - \frac{d_1^2(c_2+d_1+d_2)}{c_2(c_2+d_2)^2}, \\
         \label{EQUIVALENT_CONDITION_FINAL_CONDITIONS5}
     \sqrt{\frac{c_p}{c_1}} < \frac{d_1}{c_2(c_2+d_2)} \left( 2c_1 - \frac{c_2d_1 + d_1^2 + d_1d_2}{c_2+d_2} - \frac{c_2(c_2+d_2)}{d_1} \right).
\end{align}
respectively. Moreover, we note that (\ref{EQUIVALENT_CONDITION_EQ5}) implies fulfilment of Equation (\ref{EQUIVALENT_CONDITION_EQ2}), and therefore (\ref{EQUIVALENT_CONDITION_EQ9}) holds as well.  Hence, we have proved:
\begin{theorem}
	\label{EQUIVALENT_THEOREM1}
	Let $V^0 \in \mathcal{H}$. Suppose the numbers $c_{1}, c_{2}, d_{1}, d_{2}$ fullfil Equations (\ref{EQUIVALENT_CONDITION_FINAL_CONDITIONS1}), (\ref{EQUIVALENT_CONDITION_FINAL_CONDITIONS4}) and (\ref{EQUIVALENT_CONDITION_FINAL_CONDITIONS5}).
	Then there exist positive constants $\tilde{\alpha}$ and $C$ such that
	\begin{equation*}
	\mathcal{E}(t) \leq Ce^{-2 \tilde{\alpha} t} \mathcal{E}(0) \quad \text{ for } \quad t \geq 0.
	\end{equation*}
\end{theorem}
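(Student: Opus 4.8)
The plan is to reuse, essentially verbatim, the Lyapunov functional $\mathcal{F} = \mathcal{F}_1 + \dots + \mathcal{F}_5$ of Theorem \ref{THEOREM_LYAPUNOV} together with its derivative bound and the concluding Gronwall argument, but now with the more flexible parameter selection prepared in the discussion above. Everything therefore reduces to checking that the hypotheses (\ref{EQUIVALENT_CONDITION_FINAL_CONDITIONS1}), (\ref{EQUIVALENT_CONDITION_FINAL_CONDITIONS4}), (\ref{EQUIVALENT_CONDITION_FINAL_CONDITIONS5}) guarantee admissible $\varepsilon, N, M, \xi_1, \xi_2 > 0$ for which both the coefficient estimate (\ref{STABILITY_LYAPUNOV_DERIVATIVE_ESTIMATION}) and the equivalence (\ref{STABILITY_LYAPUNOV_ESTIMATIONS}) hold; the decay conclusion then follows exactly as before.

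First I would fix the single regularization parameter as the interval midpoint (\ref{EQUIVALENT_EPSILON_DEFINITION2}), $\varepsilon = d_1/(c_2 + d_2)$. The first inequality in (\ref{EQUIVALENT_CONDITION_FINAL_CONDITIONS1}) makes the radicand in (\ref{EQUIVALENT_CONDITION_EQ8}) nonnegative, so this $\varepsilon$ is well-defined, positive, and lies trivially in the interval (\ref{EQUIVALENT_CONDITION_EQ8}); hence the solvability constraint (\ref{EQUIVALENT_CONDITION_EQ1}) is in force (this also fixes the correct sign of the denominator appearing in the lower bound of (\ref{EQUIVALENT_CONDITION_EQ4})). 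The second inequality in (\ref{EQUIVALENT_CONDITION_FINAL_CONDITIONS1}) makes the interval (\ref{EQUIVALENT_CONDITION_EQ9}) meaningful, and since (\ref{EQUIVALENT_CONDITION_EQ5}) implies (\ref{EQUIVALENT_CONDITION_EQ2}), membership of $\varepsilon$ in (\ref{EQUIVALENT_CONDITION_EQ9}) --- i.e.\ constraint (\ref{EQUIVALENT_CONDITION_EQ2}) --- comes for free once (\ref{EQUIVALENT_CONDITION_FINAL_CONDITIONS4}) is assumed.

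Next, with $\varepsilon$ pinned down, the existence of an admissible ratio $N/M$ satisfying the two-sided bound (\ref{EQUIVALENT_CONDITION_EQ4}) is equivalent to the strict inequality (\ref{EQUIVALENT_CONDITION_EQ5}), which upon substituting (\ref{EQUIVALENT_EPSILON_DEFINITION2}) and simplifying becomes precisely hypothesis (\ref{EQUIVALENT_CONDITION_FINAL_CONDITIONS4}). The equivalence requirement (\ref{EQUATION_CONDITION_LYAPUNOV_EQUIVALENCE}), recast via (\ref{EQUIVALENT_CONDITION_EQ6}) as (\ref{EQUIVALENT_CONDITION_EQ7}), similarly reduces under the same substitution to hypothesis (\ref{EQUIVALENT_CONDITION_FINAL_CONDITIONS5}). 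Thus one may select $N, M > 0$ with a valid ratio and recover $\xi_1, \xi_2 > 0$ from the formulas fixed in the proof of Theorem \ref{THEOREM_LYAPUNOV}, so that all four negativity conditions (\ref{STABILITY_NEGATIVNESS_COEF_1})--(\ref{STABILITY_NEGATIVNESS_COEF_4}) and the equivalence condition hold simultaneously.

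With every parameter chosen, estimate (\ref{STABILITY_LYAPUNOV_DERIVATIVE_ESTIMATION}) yields $\dot{\mathcal{F}}(t) \leq -2\alpha\, \mathcal{E}(t)$ for some $\alpha > 0$, while (\ref{EQUIVALENT_CONDITION_EQ6}) secures the two-sided bound $k_1 \mathcal{E}(t) \leq \mathcal{F}(t) \leq k_2 \mathcal{E}(t)$ of (\ref{STABILITY_LYAPUNOV_ESTIMATIONS}); Gronwall's inequality then delivers the claimed decay with $\tilde{\alpha} = \alpha/k_2$ and $C = k_2/k_1$. The principal obstacle is purely computational: one must carefully insert $\varepsilon = d_1/(c_2+d_2)$ into the rational inequalities (\ref{EQUIVALENT_CONDITION_EQ5}) and (\ref{EQUIVALENT_CONDITION_EQ7}), clear denominators while tracking the sign of $d_1^2 - d_2(c_2+d_2)$ --- which (\ref{EQUIVALENT_CONDITION_FINAL_CONDITIONS1}) keeps positive --- and verify that the resulting expressions coincide with the stated clean forms (\ref{EQUIVALENT_CONDITION_FINAL_CONDITIONS4}) and (\ref{EQUIVALENT_CONDITION_FINAL_CONDITIONS5}).
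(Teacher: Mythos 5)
Your proposal is correct and follows essentially the same route as the paper: the paper's ``proof'' is precisely the discussion preceding the theorem statement, which collapses the $\varepsilon_i$'s to a single $\varepsilon$, fixes $\varepsilon = d_1/(c_2+d_2)$ as the midpoint of the interval in (\ref{EQUIVALENT_CONDITION_EQ8}), reduces the existence of admissible $N/M$ and $\xi_1,\xi_2$ to (\ref{EQUIVALENT_CONDITION_FINAL_CONDITIONS1}), (\ref{EQUIVALENT_CONDITION_FINAL_CONDITIONS4}), (\ref{EQUIVALENT_CONDITION_FINAL_CONDITIONS5}), and then invokes the Lyapunov/Gronwall machinery of Theorem \ref{THEOREM_LYAPUNOV} verbatim. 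You correctly identify all the logical dependencies, including that (\ref{EQUIVALENT_CONDITION_EQ5}) implies (\ref{EQUIVALENT_CONDITION_EQ2}) so that (\ref{EQUIVALENT_CONDITION_EQ9}) comes for free, which is exactly the paper's closing observation.
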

Letting $c_{1} = 1$ and $c_{p} = 1$,
Figure \ref{FIGURE_STABILITY_REGION} displays the subset of the stability region as defined by Equations
(\ref{EQUIVALENT_CONDITION_FINAL_CONDITIONS1}), (\ref{EQUIVALENT_CONDITION_FINAL_CONDITIONS4}) and (\ref{EQUIVALENT_CONDITION_FINAL_CONDITIONS5}).
Note that fixing $c_{1} = 1$ is no actual restriction since any of the constants $c_{1}, d_{1}, c_{2}, d_{2}$ can be normalized to one
using the substitution $z(t, x) = y(at, x)$ -- without altering the domain and, thus, the Poincar\'{e}'s constant.
From Figure \ref{FIGURE_STABILITY_REGION}, set set appears three-dimensional and non-empty
suggesting the stability region shares the same properties.
While having discovered a non-trivial positive-measure subset of the stability region,
characterizing the latter remains an open problem.


\section{Singular Limits $\tau \to 0$}
\label{SECTION_TAU_TO_ZERO}

In this section, we study the limiting behavior of Equations (\ref{INTRODUCTION_EQUATION})--(\ref{INTRODUCTION_IC_2}) as $\tau \to 0$.
To indicate the dependence on the parameter $\tau$, we rewrite the equations as follows:
\begin{align}
    \notag
    \partial_{tt} y^{\tau}(t, x) - c_1 \triangle y^{\tau}(t, x) - d_1 \triangle \partial_{t} y^{\tau}(t, x) &\phantom{=} \\
    \label{INTRODUCTION_SYSTEM_TAU_EQUATION}
    -c_2 \triangle y^{\tau}(t - \tau, x) - d_2 \partial_t \triangle y^{\tau}(t - \tau, x) &= 0 \text{ for } (t, x) \in (0, \infty) \times \Omega, \\
    \label{INTRODUCTION_SYSTEM_TAU__BC}
    y^{\tau}(t, x) = 0 \text{ for } (t, x) \in (0, \infty) \times \Gamma_0, \quad \frac{\partial y^{\tau}(t, x)}{\partial \nu} &= 0 \text{ for } (t, x) \in (0, \infty) \times \Gamma_1, \\
    \label{INTRODUCTION_SYSTEM_TAU_IC_1}
    y^{\tau}(0+, x) = y^0, \quad \partial_t y^{\tau}(0+, x) &= y^{1} \text{ for } x \in \Omega,\\
    \label{INTRODUCTION_SYSTEM_TAU_IC_2}
    y^{\tau}(t, x) &= \varphi(t, x)  \text{ for } (t, x) \in (-\tau, 0) \times \Omega,
\end{align}
Under appropriate conditions on $y^{0}, y^{1}, \varphi^{0}$, our thrust is to show the sequence of solutions $y^{\tau}$ to Equations
(\ref{INTRODUCTION_SYSTEM_TAU_EQUATION})--(\ref{INTRODUCTION_SYSTEM_TAU_IC_2}) converges in an appropriate topology to the unique solution of the following Kelvin \& Voigt viscoelastic wave equation:
\begin{align}
    \label{EQUATION_LIMITING_SYSTEM_LIMITING_PDE}
    \partial_{tt} y(t, x) - (c_1 + c_2) \triangle y(t,x) - (d_1 + d_2) \triangle \partial_t y(t, x) &= 0 \text{ for } (t, x) \in (0, \infty) \times \Omega, \\
    \label{EQUATION_LIMITING_SYSTEM_LIMITING_BC}
    y(t, x) = 0 \text{ for } x \in \Gamma_0, \quad \frac{\partial y(t, x)}{\partial \nu} &= 0 \text{ for } x \in \Gamma_1, \\
    \label{EQUATION_LIMITING_SYSTEM_LIMITING_IC}
    y(0, x) = y^{0}, \quad \partial_t y(0, x) &= y^{1} \text{ for } x \in \Omega.
\end{align}

\begin{theorem}
    \label{THEOREM_LIMIT_TAU_TO_ZERO}
    
    Let $\tau_{0} > 0$. Suppose $y^{0}, y^{1} \in D(\mathcal{A})$, $\varphi^{0} \in H^{2}\big({-\tau_{0}}, 0; D(\mathcal{A})\big)$ 
    with $\varphi(0+, \cdot) = \varphi^{0}$, $\partial_{t} \varphi(0+, \cdot) = \varphi^{1}$.
    For $0 < \tau \leq \tau_{0}$, let $y^{\tau}$ denote the unique classical solution to Equations (\ref{INTRODUCTION_SYSTEM_TAU_EQUATION})--(\ref{INTRODUCTION_SYSTEM_TAU_IC_2})
    given in Theorem \ref{THEOREM_REGULARITY} and $y$ be the unique classical solution to Equations (\ref{EQUATION_LIMITING_SYSTEM_LIMITING_PDE})--(\ref{EQUATION_LIMITING_SYSTEM_LIMITING_IC})
    provided by Theorem \ref{THEOREM_KALTENBACHER_LASIECKA}.
    Then, for any $T > 0$, there exists a number $C > 0$ such that
    \begin{align*}
        \max_{0 \leq t \leq T} \Big(\big\|y^{\tau}(t, \cdot) &- y(t, \cdot)\big\|_{H^{1}(\Omega)}^{2} + \big\|\partial_{t} y^{\tau}(t, \cdot) - \partial_{t} y(t, \cdot)\big\|_{L^{2}(\Omega)}^{2}\Big) \\
        &\leq C \tau \Big(\|y^{0}\|_{H^{2}(\Omega)}^{2} + \|y^{1}\|_{H^{2}(\Omega)}^{2} + \|\varphi\|_{H^{2}({-\tau_{0}, T}; H^{1}(\Omega))}^{2}\Big)
        \text{ for } 0 < \tau \leq \tau_{0}.
    \end{align*}
\end{theorem}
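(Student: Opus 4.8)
The plan is to run a direct energy estimate on the difference $w^{\tau} := y^{\tau} - y$, exploiting the strong (Kelvin \& Voigt) dissipation of the limiting equation (\ref{EQUATION_LIMITING_SYSTEM_LIMITING_PDE}) to absorb a remainder that is small in $\tau$, while controlling that remainder by a priori bounds on $y^{\tau}$ that are \emph{uniform} in $\tau \in (0, \tau_0]$. First I would rewrite the delay terms via the fundamental theorem of calculus: $y^{\tau}(t - \tau) = y^{\tau}(t) - \int_{t - \tau}^{t} \partial_s y^{\tau}\,\mathrm{d}s$ and $\partial_t y^{\tau}(t - \tau) = \partial_t y^{\tau}(t) - \int_{t - \tau}^{t} \partial_{ss} y^{\tau}\,\mathrm{d}s$, where for $t < \tau$ the integrals reach into the prescribed history $\varphi$ — legitimate because the compatibility conditions $\varphi(0+, \cdot) = y^{0}$, $\partial_t \varphi(0+, \cdot) = y^{1}$ make the concatenation of $\varphi$ and $y^{\tau}$ belong to $H^{2}$ in time. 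Subtracting (\ref{EQUATION_LIMITING_SYSTEM_LIMITING_PDE}) from (\ref{INTRODUCTION_SYSTEM_TAU_EQUATION}) then shows that $w^{\tau}$ solves $\partial_{tt} w^{\tau} - (c_1 + c_2) \triangle w^{\tau} - (d_1 + d_2) \triangle \partial_t w^{\tau} = R^{\tau}$ with vanishing initial data $w^{\tau}(0) = \partial_t w^{\tau}(0) = 0$, where the delay defect is $R^{\tau} = -c_2 \triangle \int_{t - \tau}^{t} \partial_s y^{\tau}\,\mathrm{d}s - d_2 \triangle \int_{t - \tau}^{t} \partial_{ss} y^{\tau}\,\mathrm{d}s$.

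\textbf{Uniform a priori bounds.} The crux is to bound $\|\partial_t y^{\tau}\|_{L^2(0, T; H^1)}$ and $\|\partial_{tt} y^{\tau}\|_{L^2(0, T; H^1)}$ by the data norms uniformly in $\tau$. The abstract semigroup estimate is useless here: the dissipativity margin of $\mathscr{A}$ in Lemma \ref{WELL_POS_MAIN_LEMMA} scales like $\tau^{-1}$ (one needs $\lambda > \tfrac{c}{2\tau}$), so $\|e^{t \mathscr{A}}\|$ degrades like $e^{Ct/\tau}$ as $\tau \to 0$. Instead I would differentiate the physical energy (\ref{STABILITY_ENERGY_DEFINITION}) along the flow, written in the form $\mathcal{E}(t) = \tfrac12\|\partial_t y\|^2 + \tfrac{c_1}{2}\|\nabla y\|^2 + \tfrac{d_1}{2}\int_{t-\tau}^{t}\|\nabla y\|^2\,\mathrm{d}r + \tfrac{d_2}{2}\int_{t-\tau}^{t}\|\nabla \partial_r y\|^2\,\mathrm{d}r$. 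The decisive point is that Leibniz differentiation of the last history integral produces a boundary contribution $-\tfrac{d_2}{2}\|\nabla \partial_t y(t - \tau)\|^2$ that \emph{exactly cancels} the term $+\tfrac{d_2}{2}\|\nabla \partial_t y(t-\tau)\|^2$ arising from Young's inequality applied to the delayed strong-damping cross term $-d_2 \langle \nabla \partial_t y, \nabla \partial_t y(t - \tau)\rangle$; this cancellation is precisely what removes the $\tau^{-1}$ pathology inherent in the weighted phase-space norm. Retaining a genuine instantaneous dissipation then requires the damping dominance $d_1 > d_2 > 0$ (which also forces $d_1 + d_2 > 0$, i.e. well-posedness of the limit via Theorem \ref{THEOREM_KALTENBACHER_LASIECKA}, and makes $\mathcal{E}$ positive). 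Under it one gets $\tfrac{\mathrm{d}}{\mathrm{d}t}\mathcal{E} \le -\kappa\|\nabla \partial_t y\|^2 + C\sup_{[0,t]} \mathcal{E} + C_{\varphi}$ with $\kappa > 0$; a sup-Gronwall argument on $[0, T]$ bounds $\mathcal{E}(t)$ uniformly and, after integration, also the dissipation $\int_0^T \|\nabla \partial_t y^{\tau}\|^2\,\mathrm{d}t$. Applying the identical argument to $u^{\tau} := \partial_t y^{\tau}$ — which solves the same delay equation, with $\partial_{tt} y^{\tau}(0)$ read off from (\ref{INTRODUCTION_SYSTEM_TAU_EQUATION}) at $t=0$ and controlled by $y^0, y^1 \in D(\mathcal{A})$ together with $\varphi^{0} \in H^{2}(-\tau_0,0; D(\mathcal{A}))$ — controls $\int_0^T \|\nabla \partial_{tt} y^{\tau}\|^2\,\mathrm{d}t$ uniformly as well.

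\textbf{Difference estimate.} Finally I would test the $w^{\tau}$-equation with $\partial_t w^{\tau}$, obtaining $\tfrac{\mathrm{d}}{\mathrm{d}t}\big(\tfrac12\|\partial_t w^{\tau}\|^2 + \tfrac{c_1 + c_2}{2}\|\nabla w^{\tau}\|^2\big) + (d_1 + d_2)\|\nabla \partial_t w^{\tau}\|^2 = \langle R^{\tau}, \partial_t w^{\tau}\rangle$. Integrating $R^{\tau}$ by parts in space (boundary terms vanish since both $w^{\tau}$ and $y^{\tau}$ satisfy the mixed Dirichlet--Neumann conditions) and using Young's inequality, the right-hand side is absorbed, up to $\tfrac{C}{d_1 + d_2}\big(c_2^2\,\big\|\nabla \int_{t-\tau}^t \partial_s y^{\tau}\,\mathrm{d}s\big\|^2 + d_2^2\,\big\|\nabla \int_{t-\tau}^t \partial_{ss} y^{\tau}\,\mathrm{d}s\big\|^2\big)$, into the dissipation $(d_1+d_2)\|\nabla\partial_t w^\tau\|^2$. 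By Cauchy \& Schwarz, $\big\|\nabla \int_{t-\tau}^{t} g\,\mathrm{d}s\big\|^2 \le \tau \int_{t-\tau}^{t} \|\nabla g\|^2\,\mathrm{d}s$, and Fubini over $[0, T]$ upgrades this to $\tau^2 \|\nabla g\|^2_{L^2(-\tau, T; L^2)}$ for $g \in \{\partial_s y^{\tau}, \partial_{ss} y^{\tau}\}$; invoking the Step-2 bounds, the forcing contributes $C\tau^2\big(\|y^0\|_{H^2}^2 + \|y^1\|_{H^2}^2 + \|\varphi\|_{H^2(-\tau_0, T; H^1)}^2\big)$. Since $w^{\tau}$ carries zero initial data, Gronwall yields $\max_{[0,T]}\big(\|\nabla w^{\tau}\|^2 + \|\partial_t w^{\tau}\|^2\big) \le C\tau^2(\cdots) \le C\tau(\cdots)$ for $\tau \le \tau_0$, which is the claimed estimate (in fact with the sharper rate $\tau^2$ in the squared norm).

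I expect the main obstacle to be Step 2, the $\tau$-uniform higher-order a priori estimates. The standard phase-space/semigroup machinery breaks down because its dissipativity constant blows up as $\tau \to 0$, so one is forced onto the physical energy and must exploit the precise algebraic cancellation between the history-integral boundary term and the delayed viscous term — this is where the damping-dominance hypothesis $d_1 > d_2 > 0$ genuinely enters. Everything downstream (the difference equation, the spatial integration by parts, and the Fubini gain of $\tau^2$) is routine once those bounds are secured.
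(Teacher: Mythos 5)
Your overall architecture is sound and the final step would deliver the claimed (indeed, a slightly sharper) rate, but you have taken a genuinely different and substantially harder route than the paper, and as written your Step 2 carries a gap. The paper also subtracts the two equations, but it keeps the \emph{delay structure} in the difference system: $\bar{y} = y^{\tau} - y$ is made to satisfy the delayed equation (\ref{EQUATION_LIMITING_SYSTEM_DIFFERENCE_PDE}) with residual
\begin{equation*}
    \bar{f}(t, \cdot) = -c_{2}\big(\triangle y(t, \cdot) - \triangle y(t - \tau, \cdot)\big) - d_{2}\big(\triangle \partial_{t} y(t, \cdot) - \triangle \partial_{t} y(t - \tau, \cdot)\big),
\end{equation*}
i.e.\ the delay defect is charged entirely to the \emph{limit} solution $y$. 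Since $y$ solves the non-delayed Kelvin--Voigt equation, the bound $\max_{t}\|\bar f(t,\cdot)\|_{(H^{1}_{\Gamma_{0}})'} \lesssim \sqrt{\tau}\,\|y\|_{H^{2}(-\tau_{0}, T; H^{1})}$ follows from the $\tau$-independent regularity estimates of Theorem \ref{THEOREM_KALTENBACHER_LASIECKA} (the cited estimates of Kaltenbacher \& Lasiecka), and no uniform-in-$\tau$ a priori bound on $y^{\tau}$ is ever needed. You instead collapse the delay via the fundamental theorem of calculus, which puts $\int_{t-\tau}^{t}\partial_{s}y^{\tau}$ and $\int_{t-\tau}^{t}\partial_{ss}y^{\tau}$ into the residual and forces you to prove $\tau$-uniform bounds on $\|\partial_{t}y^{\tau}\|_{L^{2}(0,T;H^{1})}$ and $\|\partial_{tt}y^{\tau}\|_{L^{2}(0,T;H^{1})}$ --- the step you yourself identify as the crux. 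What your route buys is a cleaner difference equation (a plain strongly damped wave equation with zero data) and the sharper $O(\tau^{2})$ rate in the squared norms via Fubini; what the paper's decomposition buys is the complete elimination of your Step 2.

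The concrete gap is in that Step 2. Your cancellation mechanism (the Leibniz boundary term of the history integral absorbing the Young term from the delayed viscous cross-product) is correct, but it yields a net dissipation $-(d_{1}-d_{2})\|\nabla\partial_{t}y^{\tau}\|^{2}$ and therefore genuinely requires the damping-dominance hypothesis $d_{1} > d_{2} > 0$ (plus an analogous smallness of $|c_{2}|$ to handle the delayed elastic cross term and keep the Gronwall constant finite). Theorem \ref{THEOREM_LIMIT_TAU_TO_ZERO} assumes no such coefficient relations --- only $c_{1}, d_{1} > 0$ and $c_{2}, d_{2} \neq 0$ from Section \ref{SECTION_WP} --- so your proof, if completed, proves a weaker statement with extra structural hypotheses. (You would additionally need to justify that $u^{\tau} := \partial_{t}y^{\tau}$ satisfies the time-differentiated delay equation with controllable initial energy, which is delicate because $\partial_{tt}y^{\tau}$ is in general only $H^{1}$ in time across the nodes $k\tau$, not $C^{0}$.) If you adopt the paper's decomposition --- charge the defect to $y$, not to $y^{\tau}$ --- the entire difficulty disappears and the argument reduces to the routine weighted energy/transport estimate on the extended phase space followed by Gronwall.
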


\begin{proof}
    Introducing the history variable $z^{\tau}(t, s, x) := y^{\tau}(t - s\tau, x)$, we obtain
    \begin{align}
        \label{EQUATION_SYSTEM_TAU_HISTORY_PDE}
        \begin{split}
            \partial_t \nabla z^{\tau}(t, s, x) + \tfrac{1}{\tau} \nabla \partial_s z^{\tau}(t, s, x) &= 0 \text{ for } (t, s, x) \in (0, \infty) \times (0,1) \times \Omega, \\
            \partial_{tt} \nabla z^{\tau}(t, s, x) + \tfrac{1}{\tau} \nabla \partial_{st} z^{\tau}(t, s, x) &= 0 \text{ for } (t, s, x) \in (0, \infty) \times (0,1) \times \Omega
        \end{split}
    \end{align}
    and obtain initial conditions
    \begin{align}
        \label{EQUATION_SYSTEM_TAU_HISTORY_IC}
        z^{\tau}(0, s, x) = \varphi^(-\tau s, x) \quad \text{ and } \quad
        \partial_{t} z^{\tau}(0, s, x) = (\partial_{s} \varphi)(-\tau s, x) \text{ for } (s, x) \in (0, 1) \times \Omega.
    \end{align}
    Since $z$ does not have initial conditions, we artificially impose
    \begin{align}
        \label{EQUATION_LIMITING_SYSTEM_HISTORY_IC}
        z(0, s, x) = \varphi^{0}(-\tau s, x) \quad \text{ and } \quad
        \partial_{t} z(0, s, x) = (\partial_{s} \varphi^{0})(-\tau s, x) \text{ for } (s, x) \in (0, 1) \times \Omega
    \end{align}
    amounting to prescribing initial conditions for $y$ on $[-\tau, 0]$.
    Note that due to the `compatibility condition' $\varphi(0, \cdot) = y^{0}$, $\partial_{t} \varphi(0, \cdot) = y^{1}$,
    the extended $y$ has the same regularity as $y^{\tau}$, in particular, $H^{2}\big({-\tau}, T; D(\mathcal{A}^{1/2}\big)$ (cf. \cite[Lemma 6.1]{KhuPoRa2015}).
    Then, letting $z(t, s, x) := y(t - s\tau, x)$, we get
    \begin{align}
        \label{EQUATION_LIMITING_SYSTEM_HISTORY_PDE}
        \begin{split}
            \partial_t \nabla z(t, s, x) + \tfrac{1}{\tau} \nabla \partial_s z(t, s, x) &= 0 \text{ for } (t, s, x) \in (0, \infty) \times (0,1) \times \Omega, \\
            \partial_{tt} \nabla z(t, s, x) + \tfrac{1}{\tau} \nabla \partial_{st} z(t, s, x) &= 0 \text{ for } (t, s, x) \in (0, \infty) \times (0,1) \times \Omega.
        \end{split}
    \end{align}
    Introducing the solution differences $\bar{y} := y^{\tau} - y$, $\bar{z}^{\tau} := z^{\tau} - z$ and subtracting Equations
    (\ref{INTRODUCTION_SYSTEM_TAU_EQUATION})--(\ref{INTRODUCTION_SYSTEM_TAU_IC_2}), (\ref{EQUATION_SYSTEM_TAU_HISTORY_PDE}), (\ref{EQUATION_SYSTEM_TAU_HISTORY_IC})
    from
    (\ref{EQUATION_LIMITING_SYSTEM_LIMITING_PDE})--(\ref{EQUATION_LIMITING_SYSTEM_LIMITING_IC}), (\ref{EQUATION_LIMITING_SYSTEM_HISTORY_PDE}), (\ref{EQUATION_LIMITING_SYSTEM_HISTORY_IC}),
    respectively, we get
    \begin{align}
        \notag
        \partial_{tt} \bar{y}(t, x) - c_1 \triangle \bar{y}(t,x) - d_1 \triangle \partial_t \bar{y}(t, x)  & \\
        \label{EQUATION_LIMITING_SYSTEM_DIFFERENCE_PDE}
        - c_2 \triangle \bar{z}(t, 1, x) - d_2 \triangle \partial_t \bar{z}(t, 1, x) &= \bar{f}(t, x) \text{ for } (t, x) \in (0, \infty) \times \Omega, \\
        \label{EQUATION_LIMITING_SYSTEM_DIFFERENCE_HISTORY}
        \partial_t \nabla \bar{z}(t, s, x) + \tfrac{1}{\tau} \nabla \partial_s \bar{z}(t, s, x) &= 0 \text{ for } (t, s, x) \in (0, \infty) \times (0,1) \times \Omega, \\
        \label{EQUATION_LIMITING_SYSTEM_DIFFERENCE_HISTORY_DT}
        \partial_{tt} \nabla \bar{z}(t, s, x) + \tfrac{1}{\tau} \nabla \partial_{st} \bar{z}(t, s, x) &= 0 \text{ for } (t, s, x) \in (0, \infty) \times (0,1) \times \Omega, \\
        \label{EQUATION_LIMITING_SYSTEM_DIFFERENCE_BC}
        \bar{z}(t, x) = 0 \text{ for } x \in \Gamma_0, \quad \frac{\partial \bar{z}(t, x)}{\partial \nu} &= 0 \text{ for } x \in \Gamma_1, \\
        \label{EQUATION_LIMITING_SYSTEM_DIFFERENCE_IC_1}
        \bar{y}(0+, x) = 0, \quad \partial_t \bar{y}(0+, x) &= 0 \text{ for } x \in \Omega, \\
        \label{EQUATION_LIMITING_SYSTEM_DIFFERENCE_IC_2}
        \bar{z}(0, s, x) = 0, \quad \partial_t \bar{z}(0, s, x) &= 0 \text{ for } (s, x) \in (0, 1) \times \Omega
    \end{align}
    with
    \begin{equation}
        \bar{f}(t, x) = -c_{2} \big(\triangle y(t, x) - \triangle y(t - \tau, x)\big) -d_{2} \big(\triangle \partial_{t} y(t, x) - \triangle \partial_{t} y(t - \tau, x)\big).
    \end{equation}
    
    Multiplying Equations (\ref{EQUATION_LIMITING_SYSTEM_DIFFERENCE_PDE})--(\ref{EQUATION_LIMITING_SYSTEM_DIFFERENCE_HISTORY_DT}) with $\partial_{t} y(t, \cdot)$ in $L^{2}(\Omega)$
    and $\nabla z(t, \cdot, \cdot)$, $\nabla \partial_{t} z(t, \cdot, \cdot)$ in $L^{2}\big((0, 1) \times \Omega\big)$, respectively, we obtain
    \begin{align*}
        \frac{1}{2} \partial_{t} &\big\|\bar{y}(t, \cdot)\|_{L^{2}(\Omega)}^{2} + \frac{c_{1}}{2} \partial_{t} \big\|\nabla \bar{y}(t, \cdot)\|_{L^{2}(\Omega)}^{2} +
        d_{1} \big\|\nabla \partial_{t} \bar{y}(t, \cdot)\big\|_{L^{2}(\Omega)}^{2} \\
        &\leq
        \varepsilon \big\|\nabla \partial_{t} \bar{y}(t, \cdot)\big\|_{L^{2}(\Omega)}^{2} + \frac{C_{\varepsilon}}{2} \Big(\big\|\nabla \bar{z}(t, 1, \cdot)\big\|_{L^{2}(\Omega)}^{2} +
        \big\|\nabla \partial_{t} \bar{z}(t, 1, \cdot)\big\|_{L^{2}(\Omega)}^{2} + 
        \big\|\bar{f}(t, \cdot)\big\|_{(H^{1}_{\Gamma_{0}}(\Omega))'}^{2}\Big),
    \end{align*}
    where we integrated by parts, used boundary conditions and applied the generalized Young's inequality.
    Selecting $\varepsilon \leq d_{1}$, we obtain
    \begin{align}
        \label{EQUATION_LIMITING_SYSTEM_DIFFERENCE_PDE_ESTIMATED}
        \begin{split}
            \frac{1}{2} \partial_{t} \big\|\bar{y}(t, \cdot)\|_{L^{2}(\Omega)}^{2} &+ \frac{c_{1}}{2} \partial_{t} \big\|\nabla \bar{y}(t, \cdot)\|_{L^{2}(\Omega)}^{2} \\
            &\leq
            \frac{\tilde{C}}{2} \Big(\big\|\nabla \bar{z}(t, 1, \cdot)\big\|_{L^{2}(\Omega)}^{2} +
            \big\|\nabla \partial_{t} \bar{z}(t, 1, \cdot)\big\|_{L^{2}(\Omega)}^{2} + 
            \big\|\bar{f}(t, \cdot)\big\|_{(H^{1}_{\Gamma_{0}}(\Omega))'}^{2}\Big).
        \end{split}
    \end{align}
    for some $\tilde{C} > 0$. Multiplying Equations (\ref{EQUATION_LIMITING_SYSTEM_DIFFERENCE_HISTORY})--(\ref{EQUATION_LIMITING_SYSTEM_DIFFERENCE_HISTORY_DT}) with
    $\tau \tilde{C} \nabla \bar{z}$, $\tau \tilde{C} \nabla \partial_{t} z$ in $L^{2}\big((0, 1) \times \Omega)$, we get
    \begin{align*}
        \frac{\tau \tilde{C}}{2} \partial_{t} \int_{0}^{1} \big\|\nabla z(t, s, \cdot)\big\|_{L^{2}(\Omega)}^{2} \mathrm{d}s +
        \frac{\tilde{C}}{2} \int_{0}^{1} \partial_{s} \big\|\nabla z(t, s, \cdot)\big\|_{L^{2}(\Omega)}^{2} \mathrm{d}s &= 0, \\
        \frac{\tau \tilde{C}}{2} \partial_{t} \int_{0}^{1} \big\|\nabla \partial_{t} z(t, s, \cdot)\big\|_{L^{2}(\Omega)}^{2} \mathrm{d}s +
        \frac{\tilde{C}}{2} \int_{0}^{1} \partial_{s} \big\|\nabla \partial_{t} z(t, s, \cdot)\big\|_{L^{2}(\Omega)}^{2} \mathrm{d}s &= 0
    \end{align*}
    and, therefore,
    \begin{align}
        \label{EQUATION_LIMITING_SYSTEM_DIFFERENCE_HISTORY_TRANSFORMED}
        \begin{split}
            \frac{\tau \tilde{C}}{2} \partial_{t} \int_{0}^{1} \big\|\nabla z(t, s, \cdot)\big\|_{L^{2}(\Omega)}^{2} \mathrm{d}s +
            \frac{\tau \tilde{C}}{2} \Big(\big\|\nabla z(t, 1, \cdot)\big\|_{L^{2}(\Omega)}^{2} - \big\|\nabla z(t, 0, \cdot)\big\|_{L^{2}(\Omega)}^{2}\Big) &= 0, \\
            \frac{\tilde{C}}{2} \partial_{t} \int_{0}^{1} \big\|\nabla \partial_{t} z(t, s, \cdot)\big\|_{L^{2}(\Omega)}^{2} \mathrm{d}s +
            \frac{\tilde{C}}{2} \Big(\big\|\nabla \partial_{t} z(t, 1, \cdot)\big\|_{L^{2}(\Omega)}^{2} - \big\|\nabla \partial_{t} z(t, 0, \cdot)\big\|_{L^{2}(\Omega)}^{2}\Big) &= 0.
        \end{split}
    \end{align}
    Adding up Equations (\ref{EQUATION_LIMITING_SYSTEM_DIFFERENCE_PDE_ESTIMATED}) and recalling
    \begin{equation}
        \notag
        \bar{z}(t, 0, \cdot) = \bar{y}(t, \cdot) \text{ and } \partial_{t} \bar{z}(t, 0, \cdot) = \partial_{t} \bar{y}(t, \cdot),
    \end{equation}
    we obtain
    \begin{align}
        \label{EQUATION_SYSTEM_TAU_DIFFERENCE_ESTIMATE_1}
        \begin{split}
            \partial_{t} \Big(\big\|\bar{y}(t, \cdot)\|_{L^{2}(\Omega)}^{2} &+ c_{1} \big\|\nabla \bar{y}(t, \cdot)\|_{L^{2}(\Omega)}^{2} \\
            &+ \tau \tilde{C} \int_{0}^{1} \big\|\nabla z(t, s, \cdot)\big\|_{L^{2}(\Omega)}^{2} \mathrm{d}s +
            \tau \tilde{C} \int_{0}^{1} \big\|\nabla \partial_{t} z(t, s, \cdot)\big\|_{L^{2}(\Omega)}^{2} \mathrm{d}s\Big) \\
            &\leq \tilde{C} \Big(\big\|\bar{y}(t, \cdot)\|_{L^{2}(\Omega)}^{2} + c_{1} \big\|\nabla \bar{y}(t, \cdot)\|_{L^{2}(\Omega)}^{2}
            + \tilde{C} \big\|\bar{f}(t, \cdot)\big\|_{(H^{1}_{\Gamma_{0}}(\Omega))'}^{2}\Big)
        \end{split}
    \end{align}
    for a suitable $\tilde{C}$ independent of $\tau$.
    Next, we estimate
    \begin{align}
        \label{EQUATION_ESTIMATE_F_BAR}
        \begin{split}
            \max_{0 \leq t \leq T} \big\|\bar{f}(t, &\cdot)\big\|_{(H^{1}_{\Gamma_{0}}(\Omega))'} \leq
            c_{1} \int_{t - \tau}^{t} \big\|\partial_{t} \triangle y(s, \cdot)\big\|_{(H^{1}_{\Gamma_{0}}(\Omega))'} \mathrm{d}s +
            d_{1} \int_{t - \tau}^{t} \big\|\partial_{tt} \triangle y(s, \cdot)\big\|_{(H^{1}_{\Gamma_{0}}(\Omega))'} \mathrm{d}s \\
            &\leq 
            c_{1} \int_{t - \tau}^{t} \big\|\partial_{t} \nabla y(s, \cdot)\big\|_{L^{2}(\Omega)} \mathrm{d}s +
            d_{1} \int_{t - \tau}^{t} \big\|\partial_{tt} \nabla y(s, \cdot)\big\|_{L^{2}(\Omega)} \mathrm{d}s \\
            &\leq 
            c_{1} \sqrt{\tau} \Big(\int_{t - \tau}^{t} \big\|\partial_{t} \nabla y(s, \cdot)\big\|_{L^{2}(\Omega)}^{2} \mathrm{d}s\Big)^{1/2} +
            d_{1} \sqrt{\tau} \Big(\int_{t - \tau}^{t} \big\|\partial_{tt} \nabla y(s, \cdot)\big\|_{L^{2}(\Omega)}^{2} \mathrm{d}s\Big)^{1/2} \\
            &\leq \sqrt{\tau} \max\{c_1, d_1\} \Big(\|y\|_{H^{1}({-\tau_{0}}, T; H^{1}(\Omega))} + \|y\|_{H^{2}({-\tau_{0}}, T; H^{1}(\Omega))}\Big).
        \end{split}
    \end{align}
    Since, by virtue of \cite[Equations (14) and (17)]{KaLa2009}, we have
    \begin{equation}
        \notag
        \begin{split}
            \int_{-\tau}^{T} \Big(\big\|\partial_{t} y(t, \cdot)\big\|_{H^{1}(\Omega)}^{2} &+ \big\|\partial_{tt} y(t, \cdot)\big\|_{H^{1}(\Omega)}^{2}\Big) \mathrm{d}s \\
            &\leq \frac{M \big(e^{\omega T} - 1\big)}{\omega} \Big(\|y^{0}\|_{H^{2}(\Omega)}^{2} + \|y^{1}\|_{H^{2}(\Omega)}^{2}\Big) + \|\varphi\|_{H^{2}({-\tau_{0}}, T; H^{1}(\Omega))}^{2},
        \end{split}
    \end{equation}
    for some $M, \omega > 0$, Equation (\ref{EQUATION_ESTIMATE_F_BAR}) yields
    \begin{equation}
        \label{EQUATION_ESTIMATE_F_BAR_FINITE}
        \max_{0 \leq t \leq T} \big\|\bar{f}(t, \cdot)\big\|_{(H^{1}_{\Gamma_{0}}(\Omega))'}^{2} \leq
        \check{C} \tau \Big(\|y^{0}\|_{H^{2}(\Omega)}^{2} + \|y^{1}\|_{H^{2}(\Omega)}^{2} + \|\varphi\|_{H^{2}({-\tau_{0}}, T; H^{1}(\Omega))}^{2}\Big) 
    \end{equation}
    for some $\check{C} = \check{C}(T) > 0$.
    Trivially adding $\tau \tilde{C} \int_{0}^{1} \big\|\nabla z(t, s, \cdot)\big\|_{L^{2}(\Omega)}^{2} \mathrm{d}s + 
    \tau \tilde{C} \int_{0}^{1} \big\|\nabla \partial_{t} z(t, s, \cdot)\big\|_{L^{2}(\Omega)}^{2} \mathrm{d}s\Big)$
    to the right-hand side of Equation (\ref{EQUATION_SYSTEM_TAU_DIFFERENCE_ESTIMATE_1}), combining the resulting estimate with Equation (\ref{EQUATION_ESTIMATE_F_BAR_FINITE})
    and applying Gronwall's inequality, we arrive at
    \begin{equation}
        \notag
        \mathcal{E}_{\tau}(t) \leq \bar{C} \Big(\|y^{0}\|_{H^{2}(\Omega)}^{2} + \|y^{1}\|_{H^{2}(\Omega)}^{2} + \|\varphi\|_{H^{2}({-\tau_{0}, 0}; H^{1}(\Omega))}^{2}\Big)
        \text{ for } 0 \leq t \leq T.
    \end{equation}
    Since the latter estimate holds iniformly in $\tau$, the claim follows.
\end{proof}

\begin{remark}
    Under conditions of Theorems \ref{THEOREM_LYAPUNOV} or \ref{EQUIVALENT_THEOREM1},
    due to exponential stability of both systems (\ref{EQUATION_LIMITING_SYSTEM_LIMITING_PDE})--(\ref{EQUATION_LIMITING_SYSTEM_LIMITING_IC}) (cf. \cite{KaLa2009})
    and (\ref{INTRODUCTION_SYSTEM_TAU_EQUATION})--(\ref{INTRODUCTION_SYSTEM_TAU_IC_2}),
    the proof of Theorem \ref{THEOREM_LIMIT_TAU_TO_ZERO} can easily be amended to obtain a global-in-time convergence:
    \begin{align*}
        \sup_{0 \leq t \leq \infty} \Big(\big\|y^{\tau}(t, \cdot) &- y(t, \cdot)\big\|_{H^{1}(\Omega)}^{2} + \big\|\partial_{t} y^{\tau}(t, \cdot) - \partial_{t} y(t, \cdot)\big\|_{L^{2}(\Omega)}^{2}\Big) \\
        &\leq C \tau \Big(\|y^{0}\|_{H^{2}(\Omega)}^{2} + \|y^{1}\|_{H^{2}(\Omega)}^{2} + \|\varphi\|_{H^{2}({-\tau_{0}, T}; H^{1}(\Omega))}^{2}\Big)
        \text{ for } 0 < \tau \leq \tau_{0}.
    \end{align*}
\end{remark}

\section{Numerical Example} \label{SECTION_NUMERICAL_EXAMPLE}
In this final section, we present a numerical example to illustrate our model and compare it with its classic instanteneous version.
Motivated by \cite{MoLe2007}, we consider a viscoelatic rod-shaped muscle sample of length $L > 0$ (with a sufficiently small cross-section)
fixed at its left boundary $x = 0$, undeformed and free of stresses for the time $t \leq 0$ 
and subject to a constant load $f$ at $x = L$ pointing into the direction of the outer normal for all times $t > 0$.
Let $\rho, E$ and $\eta$ denote the (uniform) mass density, Young's modulus and viscosity, respectively.
For Kelvin \& Voigt viscoelastic materials, one commonly considers a characteristic quantity $\tau_{r} = \eta/E$ referred to as `retardation time'.
For illustration purposes, we let the delay time $\tau$ be equal $\tau_{r}$ and define
\begin{equation}
    \label{EQUATION_EXAMPLE_CONSTANTS_C_D}
    c_{1} := E/\rho, \quad d_{1} := \eta/\rho, \quad c_{2} := \varepsilon E/\rho \quad \text{ and } \quad d_{2} := \varepsilon \eta/\rho \notag
\end{equation}
for a small positive $\varepsilon > 0$ to be chosen later. Table \ref{TABLE_PHYSICAL_CONSTANTS} summarizes the physical constants introduced above.
\begin{table}[!h]
    \centering
    
    \begin{tabular}{cccc}
        Notation & Units        & Value                  & Description \\
        \hline \hline
        $L$        & m            & $5.33 \cdot 10^{-3}$   & rod length                            \\
        $\rho$     & kg/m$^{3}$   & $1.06 \cdot 10^{3}$    & mass density                          \\
        $E$        & Pa           & $2.00 \cdot 10^{4}$    & Young's modulus                       \\
        $\eta$     & Pa $\cdot$ s & $2.00 \cdot 10^{7}$    & viscosity                             \\
        $\tau$     & s            & $1.00 \cdot 10^{3}$    & delay/retardation time                \\
        $f$        & Pa           & $1.0052 \cdot 10^{4}$  & (outward) surface traction at $x = L$ \\
        \hline
    \end{tabular}
    
    \caption{Physical constants of the model (cf. \cite[Table 3]{MoLe2007}) \label{TABLE_PHYSICAL_CONSTANTS}}
\end{table}

Letting $y(t, x)$ denote the longitudinal displacement of the material point $x \in (0, L)$ in the reference configuration at time $t$, 
with the notation from Equation (\ref{EQUATION_EXAMPLE_CONSTANTS_C_D}),
the one-dimensional version of Equations (\ref{INTRODUCTION_EQUATION})--(\ref{INTRODUCTION_IC_2}) describing the deformation of the sample reads as
\begin{align}
    \notag
    \partial_{tt} y(t, x) - c_1 \partial_{xx} y(t, x) - c_2 \partial_{xx} y(t-\tau, x) &\phantom{=} \\
    \label{EXAMPLE_EQUATION_PDE}
    -d_1 \partial_t \partial_{xx} y(t,x) - d_2 \partial_t \partial_{xx} y(t - \tau, x) &= 0 \text{ for } t > 0, \quad x \in (0, L) \\
    \label{EXAMPLE_EQUATION_BC_1}
    y(t, 0) = 0, \quad c_1 \partial_{x} y(t, L) + c_2 \partial_{x} y(t - \tau, L) &\phantom{=} \\
    \label{EXAMPLE_EQUATION_BC_2}
    + d_1 \partial_t \partial_{x} y(t, L) + d_2 \partial_t \partial_{x} y(t - \tau, L) &= f \text{ for } t > 0, \\
    \label{EXAMPLE_EQUATION_IC}
    y(0+, x) = 0, \quad \partial_{t} y(0+, x), \quad
    y(t, x) &= 0 \text{ for } t \in (-\tau, 0), \quad x \in (0, L).
\end{align}
To make the boundary condition (\ref{EXAMPLE_EQUATION_BC_2}) compatible with Equation (\ref{INTRODUCTION_BC_2}) and the theory developed in Section \ref{SECTION_RADON_MEASURE_SOLUTION},
it needs to be reformulated in terms of $\psi(t) := \frac{\partial y(t, L)}{\partial \nu}$:
\begin{equation}
    \label{EQUATION_NEUTRAL_DDE}
    d_{1} \dot{\psi}(t) + c_{1} \psi(t) + c_{2} \psi(t - \tau) + d_{2} \dot{\psi}(t - \tau) = f \text{ for } t > 0, \quad
    \psi(t) = 0 \text{ for } t \in [-\tau, 0].
\end{equation}
For $\varepsilon = 0$ (viz. $c_{2} = d_{2} = 0$), Equation (\ref{EQUATION_NEUTRAL_DDE}) reduces to an ODE uniquely solvable by
$\psi(t) = \frac{f}{E}\Big(1 - e^{-\frac{Et}{\eta}}\Big) \mathds{1}_{t \geq 0}$.
For $\varepsilon > 0$, Equation (\ref{EQUATION_NEUTRAL_DDE}) is a neutral differential equation
and is known to possess a global unique solution $\psi \in C^{\infty}\big([-\tau, 0]\big) \cap C^{1}\big([0, \infty)\big)$ (cf. \cite{HaVeLu1993, KoNiPhi1998})
plotted in Figure \ref{FIGURE_SOLUTION_PSI}.
\begin{figure}
    \centering
    \includegraphics[scale = 0.4]{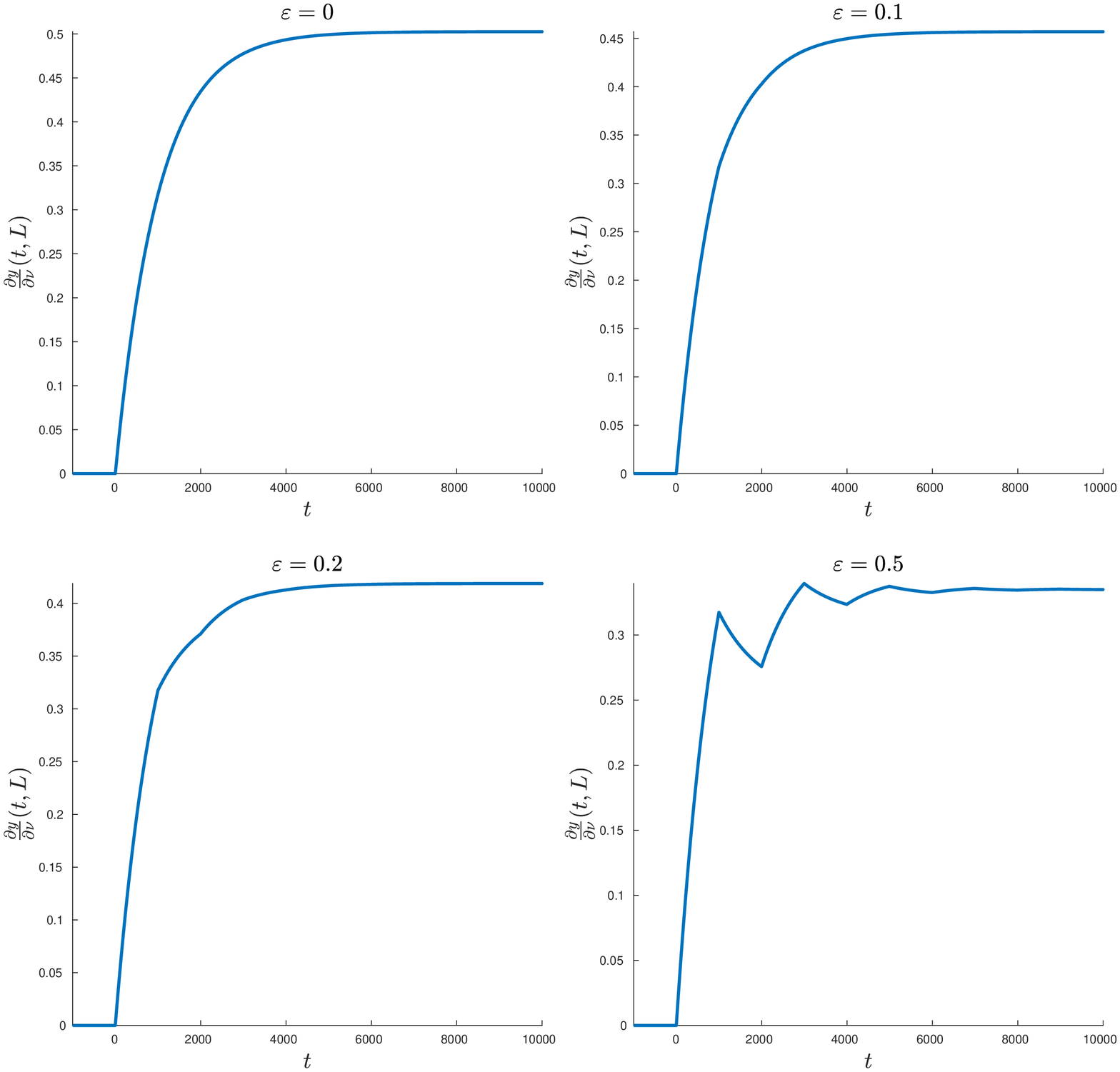}
    \vspace{-0.2in}
    
    \caption{Numerical solution to Equation (\ref{EQUATION_NEUTRAL_DDE}) for $\varepsilon = 0.0, 0.1, 0.2, 0.5$ \label{FIGURE_SOLUTION_PSI}}
\end{figure}

Introducing 
\begin{equation}
    \label{EQUATION_EXAMPLE_SUBSTITUTION_FOR_Y}
    w(t, x) := y(t, x) - \Big(\psi(t) - \frac{\dot{\psi}(0+)}{2} |t|\Big) x
\end{equation}
and observing that $\ddot{\psi}$ is a real-valued Radon measure on $[0, \infty)$ with $\ddot{\psi}\big(\{0\}\big) = \dot{\psi}(0+) \delta_{0}$, 
where $\delta_{0}(\cdot)$ is the Dirac distribution supported at the origin, we obtain
\begin{align}
    \notag
    \partial_{tt} w(t, x) - c_1 \partial_{xx} w(t, x) - c_2 \partial_{xx} w(t-\tau, x) &\phantom{=} \\
    \label{EXAMPLE_EQUATION_TRANSFORMED_PDE}
    -d_1 \partial_t \partial_{xx} w(t,x) - d_2 \partial_t \partial_{xx} w(t - \tau, x) &= \Big(\ddot{\psi}(t) - \delta_{0}(t) \dot{\psi}(0+)\Big) x \text{ for } t > 0, \quad x \in (0, L), \\
    \label{EXAMPLE_EQUATION_TRANSFORMED_BC_1}
    w(t, 0) = 0, \quad \frac{\partial w}{\partial \nu}(t, L) &= 0 \text{ for } t > 0, \\
    \label{EXAMPLE_EQUATION_TRANSFORMED_IC}
    w(0+, x) = 0, \quad \partial_{t} w(0+, 0) = 0, \quad w(t, x) &= 0 \text{ for } t \in [-\tau, 0), \quad x \in [0, L].
\end{align}
Since the right-rand side of Equation (\ref{EXAMPLE_EQUATION_TRANSFORMED_PDE}) is a Radon measure regular at $0$,
on the strength of Corollary \ref{COROLLARY_CAUCHY_PROBLEM_RADON_MEASURE_EXISTENCE},
Equations (\ref{EXAMPLE_EQUATION_TRANSFORMED_PDE})--(\ref{EXAMPLE_EQUATION_TRANSFORMED_IC}) possess a unique extrapolated BV-solution
$w$ with $w \in H^{1}\big({-\tau}, 0; \mathcal{H}\big) \cap \operatorname{BV}_{\mathrm{loc}}\big([0, \infty), D(\mathcal{A}^{1/2})\big)$,
$\partial_{t} w \in L^{2}\big({-\tau}, 0; \mathcal{H}\big) \cap \operatorname{BV}_{\mathrm{loc}}\big([0, \infty), \mathcal{H}\big)$, 
where the space $\mathcal{H} := L^{2}\big((0, L)\big)$ is equipped with the usual inner product and $\mathcal{A} \equiv -\partial_{xx} \colon D(\mathcal{A}) \to \mathcal{H}$ with
\begin{equation}
    D(\mathcal{A}) = \big\{w \in H^{2}\big((0, L)\big) \,|\, w(0) = 0, \quad \partial_{x} w(L) = 0\big\}. \notag
\end{equation}

\begin{figure}
    \centering
    \includegraphics[scale = 0.6]{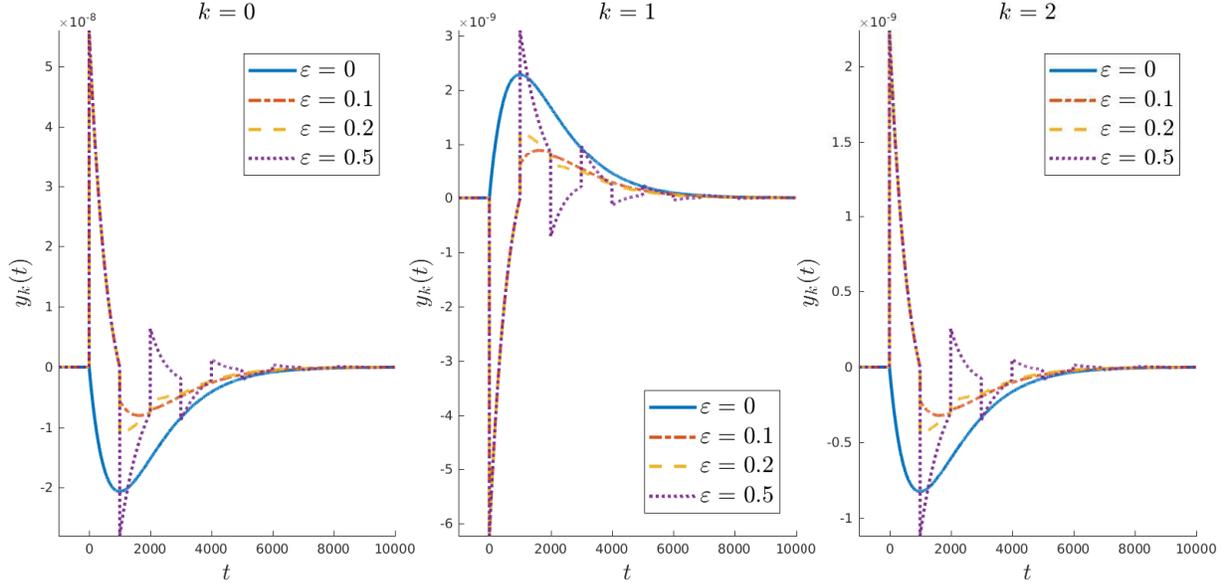}
    
    \caption{Numerical approximation of the modes $w_{n}$ from Equation (\ref{EQUATION_EXAMPLE_MODES}) \label{FIGURE_MODES}}
\end{figure}

The operator $\mathcal{A}$ is known to be diagonalizable via
\begin{align}
    \mathcal{A} &= \sum_{n = 0}^{\infty} \lambda_{n} \langle \cdot, \phi_{n}\rangle_{\mathcal{H}} \phi_{n} \quad \text{ with } \\
    \lambda_{n} &= \frac{\pi^{2} (2k + 1)^{2}}{4 L^{2}} \quad \text{ and } \quad
    \phi_{n}(x) = \sqrt{\frac{2}{L}} \sin\Big(\frac{2 \pi (k + 1) x}{2L}\Big) \text{ for } n \in \mathbb{N}_{0}.
\end{align}
Further, we can write
\begin{equation}
    F(t, \cdot) = \sum_{n = 0}^{\infty} F_{n}(t) \phi_{n}(x) \text{ with }
    F_{n}(t) = \sqrt{\frac{2}{L}} \frac{4 L^{2}}{\pi^{2}} \frac{(-1)^{k}}{(2k + 1)^{2}} \Big(\psi(t) - \frac{\dot{\psi}(0+)}{2} |t|\Big) \text{ for } n \in \mathbb{N}_{0}.
\end{equation}
Thus, letting
\begin{equation}
    \label{EQUATION_EXAMPLE_ANSATZ_W}
    w(t, \cdot) = \sum_{n = 0}^{\infty} w_{n}(t) \phi_{n}, 
\end{equation}
Equations (\ref{EXAMPLE_EQUATION_TRANSFORMED_PDE})--(\ref{EXAMPLE_EQUATION_TRANSFORMED_IC}) can be reduced to solving the sequence of decoupled `ordinary' delay differential equations
with an impulse right-hand side
\begin{equation}
    \label{EQUATION_EXAMPLE_MODES}
    \begin{split}
        \ddot{w}_{n}(t) + c_{1} \lambda_{n} w_{n}(t) + c_{2} \lambda_{n} \dot{w}_{n}(t)
        + c_{2} \lambda_{n} w_{n}(t - \tau) + c_{2} \lambda_{n} \dot{w}_{n}(t - \tau) &= \ddot{F}_{n}(t) \text{ for } t > 0, \\
        w_{n}(0+) = \dot{w}_{n}(t) = 0, \quad w_{n}(t) &= 0 \text{ for } t \in [-\tau, 0)
    \end{split}
\end{equation}
for $n_{0} \in \mathbb{N}_{0}$.
By Theorem \ref{THEOREM_CAUCHY_PROBLEM_RADON_MEASURE_EXISTENCE}, 
for each $n \in \mathbb{N}$, there exists a unique solution $w_{n}$ with $w_{n}, \dot{w}_{n} \in \operatorname{BV}_{\mathrm{loc}}\big([0, \infty)\big)$.

Figure \ref{FIGURE_MODES} displays the numerical solution to Equations (\ref{EQUATION_EXAMPLE_MODES}) obtained with the standard Crank \& Nicholson time integrator
with the same integrator employed to solve the neutral differential equation (\ref{EQUATION_NEUTRAL_DDE}).
The second-order derivative $\ddot{\psi}(t)$ was approximated with a 3-point backward differential quotient formula.
We truncated the time interval at $T = 10 \tau$ and chose a time step $h \approx 10^{-3} \tau$.
\ifdefined\MATLAB
A \texttt{Matlab}$^{\text{\textregistered}}$ implementation of the solver can be found in the Appendix.
\fi
\begin{figure}
    \centering
    \includegraphics[scale = 0.6]{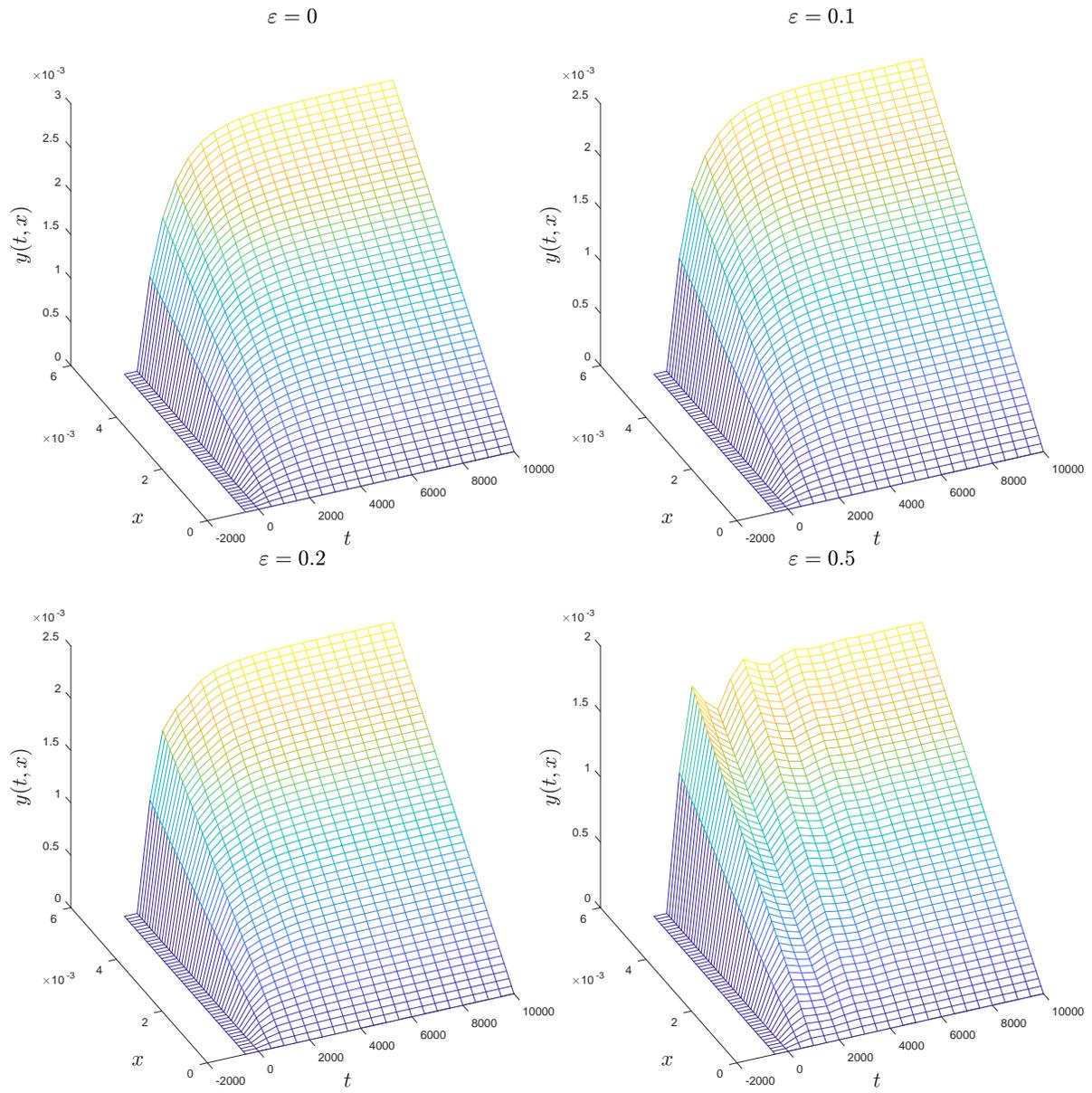}
    
    \caption{Numerical solution to Equations (\ref{EXAMPLE_EQUATION_TRANSFORMED_PDE})--(\ref{EXAMPLE_EQUATION_TRANSFORMED_IC}) for $\varepsilon = 0.0, 0.1, 0.2, 0.5$ \label{FIGURE_SOLUTION_Y}}
\end{figure}

Resubstituting the numerical solutions $w_{n}(t)$ into Equation (\ref{EQUATION_EXAMPLE_ANSATZ_W})
and plugging the result along with the numerical solution $\psi(t)$ into Equation (\ref{EQUATION_EXAMPLE_SUBSTITUTION_FOR_Y}),
we obtain a numerical solution $y(t, x)$ for Equations (\ref{EXAMPLE_EQUATION_PDE})--(\ref{EXAMPLE_EQUATION_IC}).
The latter is displayed in Figure \ref{FIGURE_SOLUTION_Y} for $\varepsilon = 0.0, 0.1, 0.2, 0.5$.
The solutions appear to be quite consistent with each other among all $\varepsilon$'s considered
and almost perfectly agree with the 2D results reported by Moravec \& Letzelter \cite[Figure 3, KV model]{MoLe2007}, especially for $\varepsilon < 0.5$.
Our observations go in line with our expectations outlined in the Introduction Section \ref{SECTION_INTRODUCTION}
stating that the delay material law from Equation (\ref{EQUATION_HOOK_LAW_KELVIN_VOIGT_DELAY})
would allow for a more complicated, but stable dynamics catching non-monotonic patters
as opposed to the classic instanteneous constitutional relation (\ref{EQUATION_HOOK_LAW_KELVIN_VOIGT}).


\section{Summary and Conclusions}
We proposed a model of a viscoelastic Kelvin \& Voigt material with a time-localized delay material law
and established a well-posedness theory for the underlying initial-value problem.
Further, we investigated the long-time behavior of the system including a a discussion on the stability region in the parameter space.
Finally, we gave a numerical study of our model applied to a real-world example.
Our future investigations will include nonlinear material laws incorporating both mixed time-localized and integral time delay terms.

\section*{Acknowledgements}
The work on this project was supported by the German Research Foundation (DFG) through the Collaborative Research Center 1173 ``Wave Phenomena,''
a start-up grant from the University of Texas at El Paso and the Erasmus+ Mobility Program (participation contract No. 128/2015-2016).


\bibliographystyle{plain}
\bibliography{bibliography} 

\ifdefined\MATLAB
\appendix

\section*{Appendix: \texttt{Matlab}$^{\text{\textregistered}}$ Code}
{\footnotesize
\begin{verbatim}
%%%%%%%%%%%%%%%%%%%%%%%%%%%%%%%%%%%%%%%%%%%%%%%%%%%%%%%%%%%%%%%%%%%%%%%%%%%
%                    Kelvin & Voigt viscoelastic rod                      %
%                                                                         %
% (C) Michael Pokojovy 2018                                               %
%%%%%%%%%%%%%%%%%%%%%%%%%%%%%%%%%%%%%%%%%%%%%%%%%%%%%%%%%%%%%%%%%%%%%%%%%%%

function example
    %%%%%%%%%%%%%%%%%%%%%%%%%%%%%%%%%%%%%%%%%%%%%%%%%%%%%%%%%%%%%%%%%%%%%%%
    % Creep problem for a Kelvin-Voigt viscoelastic muscle sample         %
    %%%%%%%%%%%%%%%%%%%%%%%%%%%%%%%%%%%%%%%%%%%%%%%%%%%%%%%%%%%%%%%%%%%%%%%

    global L rho E eta epsilon tau f;
    
    L = 5.33E-3;      % rod length [m]
    
    rho = 1.06E3;     % mass density  [kg/m^3]
    E   = 2.00E4;     % Young modulus [Pa]
    eta = 2.00E7;     % viscosity     [Pa x s]
    
    f = 1.0052E4;     % surface traction applied at x = L [Pa]
    
    tau = eta/E;      % 'retardation time' [s]
    
    N = 1000;         % number of grid points per delay period
    K = 10;           % number of delay periods past 0
    
    %%%%%%%%%%%%%%%%%%%%%%%%%%%%%%%%%%%%%%%%%%%%%%%%%%%%%%%%%%%%%%%%%%%%%%%
    figure(1);
    
    set(gcf, 'PaperUnits', 'centimeters');
    xSize = 28; ySize = 28;
    xLeft = (21 - xSize)/2; yTop = (30 - ySize)/2;
    set(gcf,'PaperPosition', [xLeft yTop xSize ySize]);
    set(gcf,'Position', [0 0 xSize*50 ySize*50]);
    
    epsilons = [0 0.1 0.2 0.5];
    
    for k = 1:length(epsilons)
        subplot_tight(2, 2, k, [0.09 0.075]);
        hold on;
        
        epsilon = epsilons(k);
        
        title(['$\varepsilon = ', num2str(epsilon), '$'], ...
               'interpreter', 'latex', 'FontSize', 20);
        
        T = linspace(-tau, K*tau, (K + 1)*N);
        Y_bc = BC(K, N);
        
        plot(T, Y_bc , 'LineWidth', 2);
        
        axis([min(T) max(T) min(Y_bc) max(Y_bc)]);
        
        xlabel('$t$', 'interpreter', 'latex', 'FontSize', 20);
        ylabel('$\frac{\partial y}{\partial \nu}(t, L)$', ...
               'interpreter', 'latex', 'FontSize', 20);
    end 
    
    %%%%%%%%%%%%%%%%%%%%%%%%%%%%%%%%%%%%%%%%%%%%%%%%%%%%%%%%%%%%%%%%%%%%%%%
    figure(2);
    
    set(gcf, 'PaperUnits', 'centimeters');
    xSize = 28; ySize = 14;
    xLeft = (21 - xSize)/2; yTop = (30 - ySize)/2;
    set(gcf, 'PaperPosition', [xLeft yTop xSize ySize]);
    set(gcf, 'Position', [0 0 xSize*50 ySize*50]);
    
    ks = [0 1 2];
    
    for i = 1:3
        k = ks(i);
        
        epsilon = 0;
        [T, Y_nought]  = ODEmode(k, K, N);
        
        mx = min(T);
        Mx = max(T);
        my = min(Y_nought(1, :));
        My = max(Y_nought(1, :));
 
        epsilons = [0.1 0.2 0.5];
                
        subplot_tight(1, 3, i, [0.1 0.05]);
        title(['$k = ', num2str(k), '$'], ...
               'interpreter', 'latex', 'FontSize', 20);
        hold on;
        plot(T, Y_nought(1, :), '-', 'LineWidth', 2);
        
        line_types = {'-.', '--', ':'};
        
        for j = 1:length(epsilons)
            epsilon = epsilons(j);

            [T, Y_tau] = DDEmode(k, K, N);

            my = min(my, min(Y_tau(1, :)));
            My = max(My, max(Y_tau(1, :)));
            
            plot(T, Y_tau(1, :), line_types{j}, 'LineWidth', 2); 
        end
        
        legend({['$\varepsilon = 0$'], ...
                ['$\varepsilon = ', num2str(epsilons(1)), '$'], ...
                ['$\varepsilon = ', num2str(epsilons(2)), '$'], ...
                ['$\varepsilon = ', num2str(epsilons(3)), '$']}, ...
                 'Location', 'Best', 'interpreter', 'latex', 'FontSize', 20);
        
        axis([mx Mx my My]);  
        
        xlabel('$t$', 'interpreter', 'latex', 'FontSize', 20);
        ylabel('$y_{k}(t)$', 'interpreter', 'latex', 'FontSize', 20);
    end
    
    %%%%%%%%%%%%%%%%%%%%%%%%%%%%%%%%%%%%%%%%%%%%%%%%%%%%%%%%%%%%%%%%%%%%%%%
    figure(3);
    
    set(gcf, 'PaperUnits', 'centimeters');
    xSize = 28; ySize = 28;
    xLeft = (21 - xSize)/2; yTop = (30 - ySize)/2;
    set(gcf,'PaperPosition', [xLeft yTop xSize ySize]);
    set(gcf,'Position', [0 0 xSize*50 ySize*50]);
    
    %%%
    subplot_tight(2, 2, 1, [0.05 0.075]);
    hold on;
    title(['$\varepsilon = ', num2str(0), '$'], ...
           'interpreter', 'latex', 'FontSize', 20);
    
    epsilon = 0;
    
    Ind = (1:((K + 1)*N/100))*100;
    T_sel = T(Ind);
    X_sel = linspace(0, L, 50);
    
    [T_grid, X_grid] = meshgrid(T_sel, X_sel);
    
    Y_grid = zeros(size(X_grid));
    Y_bc = BC(K, N);
    
    for k = 0:20
        [~, Y_tau] = ODEmode(k, K, N);
        
        for i = 1:size(T_grid, 1)
            Y_grid(i, :) = Y_grid(i, :) + ...
            (Y_tau(1, Ind)).*(sqrt(2/L)*sin(pi/L*(k + 0.5)*X_grid(i, :)));
        end
    end

    for j = 1:size(T_grid, 1)
        Y_grid(j, :) = Y_grid(j, :) + Y_bc(Ind).*X_grid(j, :);
    end
    
    xlabel('$t$', 'interpreter', 'latex', 'FontSize', 20);
    ylabel('$x$', 'interpreter', 'latex', 'FontSize', 20);
    zlabel('$y(t, x)$', 'interpreter', 'latex', 'FontSize', 20);
    
    mesh(T_grid, X_grid, Y_grid);
    view([-24 33]);
    
    %%%
    for i = 1:3
        subplot_tight(2, 2, i + 1, [0.05 0.075]);
        hold on;
        title(['$\varepsilon = ', num2str(epsilons(i)), '$'], ...
               'interpreter', 'latex', 'FontSize', 20);
        
        epsilon = epsilons(i);

        Ind = (1:((K + 1)*N/100))*100;
        T_sel = T(Ind);
        X_sel = linspace(0, L, 50);

        [T_grid, X_grid] = meshgrid(T_sel, X_sel);

        Y_grid = zeros(size(X_grid));
        Y_bc = BC(K, N);

        for k = 0:20
            [~, Y_tau] = DDEmode(k, K, N);

            for j = 1:size(T_grid, 1)
                Y_grid(j, :) = Y_grid(j, :) + ...
                (Y_tau(1, Ind)).*(sqrt(2/L)*sin(pi/L*(k + 0.5)*X_grid(j, :)));
            end
        end
        
        for j = 1:size(T_grid, 1)
            Y_grid(j, :) = Y_grid(j, :) + Y_bc(Ind).*X_grid(j, :);
        end
        
        xlabel('$t$', 'interpreter', 'latex', 'FontSize', 20);
        ylabel('$x$', 'interpreter', 'latex', 'FontSize', 20);
        zlabel('$y(t, x)$', 'interpreter', 'latex', 'FontSize', 20);

        mesh(T_grid, X_grid, Y_grid);
        view([-24 33]);
    end
end

function [T, Y] = ODEmode(k, K, N)
    % Solves the ODE for the k-th mode

    global L rho E eta epsilon tau f;
    
    c1 = (1 + epsilon)*E/rho;
    d1 = (1 + epsilon)*eta/rho;
    
    lambda = (pi/L*(k + 0.5))^2;
    A = [ 0          1;
         -c1*lambda -d1*lambda];
         
    T = linspace(-tau, K*tau, (K + 1)*N);
    Y = zeros(2, length(T));
    
    for j = 1:N
        Y(:, j) = 0;
    end
    
    Y_bc = BC(K, N);
    
    for j = (N + 1):length(T)
        dt = T(j) - T(j - 1);

        F = 4*sqrt(2/L)*L^2*(-1)^k/(pi^2*(2*k + 1)^2)*[1; 0]* ...
            (-f*E/eta^2*exp(-E*T(j)/eta));
        
        Y(:, j) = (eye(2) - 0.5*dt*A)\((eye(2) + 0.5*dt*A)*Y(:, j - 1) + 0.5*dt*F);
    end
end

function [T, Y] = DDEmode(k, K, N)
    % Solves the DDE for the k-th mode

    global L rho E eta epsilon tau;

    c1 = E/rho;
    d1 = eta/rho;
    
    c2 = c1*epsilon;
    d2 = d1*epsilon;
    
    lambda = (pi/L*(k + 0.5))^2;
    A = [ 0              1;
         -c1*lambda -d1*lambda];
    
    B = [ 0              0;
         -c2*lambda -d2*lambda];
         
    T = linspace(-tau, K*tau, (K + 1)*N);
    Y = zeros(2, length(T));
    
    Y_bc = BC(K, N);
    
    for j = (N + 1):length(T)
        dt = T(j) - T(j - 1);
        
        F = 4*sqrt(2/L)*L^2*(-1)^k/(pi^2*(2*k + 1)^2)*[1; 0]* ...
            (Y_bc(j) - 2*Y_bc(j - 1) + Y_bc(j - 2))/dt^2; 
        
        Y(:, j) = (eye(2) - 0.5*dt*A)\((eye(2) + 0.5*dt*A)*Y(:, j - 1) + ...
                   0.5*dt*B*(Y(:, j - N + 1) + Y(:, j - N)) + 0.5*dt*F);
    end
end

function Y = BC(K, N)
    % Solve initial condition for $\partial y/\partial \nu$

    global E eta epsilon tau f;

    T = linspace(-tau, K*tau, (K + 1)*N);
    Y = zeros(1, length(T));
    
    for j = (N + 1):length(T)
        dt = T(j) - T(j - 1);
        
        a = -E/eta;
        b = -epsilon*E/eta;
        c = -epsilon;
        
        if (epsilon == 0)
            Y(j) = f/E*(1 - exp(-E*T(j)/eta));
        else    
            Y(j) = (1 - 0.5*dt*a)\(dt*f/eta + (1 + 0.5*a*dt)*Y(j - 1) + ...
                    0.5*b*dt*Y(j - N) + 0.5*b*dt*Y(j - N + 1) + ...
                    0.5*c*(Y(j - N + 2) - Y(j - N)));
        end
    end
end
\end{verbatim}
}
\fi

\end{document}